\def \Sp{\mathfrak {Sp}}
\newcommand{\V}{{\mathcal V}}
\newcommand{\Q}{{\mathcal Q}}
\newcommand{\U}{T}
\def\ra{\rightarrow}
\def\a{\alpha}
\newcommand{\ka}{\varkappa}
\def\l{\lambda}
\def\n{\nu}
\def\s{\sigma}
\def\e{\varepsilon}
\def\z{\zeta}
\def\De{\Delta}
\def\R{{\mathbb R}}
\def\C{{\mathbb C}}
\def\N{{\mathbb N}}
\def\Z{{\mathbb Z}}
\def\er{\end{remark}}
\def\bt{\begin{theorem}}
\def\et{\end{theorem}}
\def\bc{\begin{coroll}}
\def\ec{\end{coroll}}
\def\brs{\begin{remarks} \rm\
\begin{enumerate}}
\def\ers{\end{enumerate}\end{remarks}}
\def\bl{\begin{lemma}}
\def\el{\end{lemma}}
\def\bxs{\begin{examps}. \rm\begin{enumerate}}
\def\exs{\end{enumerate}\end{examps}}
\def\bd{\begin{definition}}
\def\ed{\end{definition}}
\def\bp{\begin{proposition}}
\def\ep{\end{proposition}}
\def\be{\begin{equation}}
\def\ee{\end{equation}}
\def\hf{\frac{1}{2}}
\def\wh{\widehat}
\def\bea{\begin{eqnarray}}
\def\eea{\end{eqnarray}} 
\def\Om{\Omega}
\def\Id{\mathrm {Id}}
\newcommand{\E}{\mathcal E}
\newcommand{\A}{\mathcal A}
\newcommand{\BigO}[1]{\ensuremath{\operatorname{\mathcal{O}}(#1)}}
\definecolor{shadecolor}{rgb}{0.95, 0.95, 0.86}
\def\blue#1{\textcolor[rgb]{0,0,1}{#1}}
\newtheorem{theorem}{Theorem}[section]
\newtheorem{exercise}{Exercise}[section]
\newtheorem{examps}{Examples}[section]
\newtheorem{lemma} [theorem] {Lemma} 
\newtheorem{remark}[theorem]{Remark}
\newtheorem{problem}[theorem]{Problem}
\def\mod{\,\hbox{mod}\,}
\newtheorem{remarks}[theorem]{Remarks}
\newtheorem{proposition}[theorem]{Proposition}
\newtheorem{corollary}[theorem]{Corollary} 
\newtheorem{definition}[theorem]{Definition}
\def\le{\left}
\def\ri{\right}
\def\ds{\displaystyle}
\def\V {\mathcal V}
\def\bt{\begin{theorem}}
\def\et{\end{theorem}}
\def\bc{\begin{corollary}}
\def\ec{\end{corollary}}
\def\bx{\begin{examp}\small}
\def\ex{\end{examp}}
\def\bxr{\begin{exercise}\small}
\def\exr{\end{exercise}}
\def\bl{\begin{lemma}}
\def\el{\end{lemma}}
\def\bd{\begin{definition}}
\def\ed{\end{definition}}
\def\bp{\begin{proposition}}
\def\ep{\end{proposition}}
\def\br{\begin{remark}}
\def\er{\end{remark}}
\def\be{\begin{equation}}
\def\ee{\end{equation}}
\def\bi{\begin{itemize}}
\def\ei{\end{itemize}}
\def\ov {\overline}
\def\&{\hspace{-15pt}&}
\def\bea{\begin{eqnarray}}
\def\eea{\end{eqnarray}}
\def\beas{\begin{eqnarray*}}
\def\eeas{\end{eqnarray*}}
\def\C{{\mathbb C}}
\def\E{\mathcal E}
\def\R{{\mathbb R}}
\def\N{{\mathbb N}}
\def\wh{\widehat}
\def\G{{\Gamma}}
\def\Do{{\mathcal{D}}}
\def\Z{{\mathbb Z}}
\def\a{\alpha}
\def\l{\lambda}
\def\Pot{\mathcal {P}}
\def\1{{\bf 1}}
\def\qt{\frac 14}
\def\gg{{\mathfrak g}}
\def\tgg{\tilde{\mathfrak g}}
\begin{document}

\numberwithin{equation}{section}

\title{Singular limits of certain Hilbert-Schmidt integral operators}
\author{M. Bertola, E. Blackstone, A. Katsevich and A. Tovbis }
\thanks{The work of the first author was supported in part by the Natural Sciences and Engineering Research Council of Canada (NSERC). The work of the second author was supported in part by the European Research Council, Grant Agreement No. 682537. The work of the third author was supported in part by NSF grants DMS-1615124 and DMS-1906361. The work of the fourth author was supported in part by NSF grant DMS- 2009647.\\
Concordia University, Montreal, Canada (first author), Department of Mathematics, University of Michigan, Ann Arbor, MI 48109-1043 (second author), and
Department of Mathematics, University of Central Florida, Orlando, FL 32816-1364 (third and fourth authors)\\
{\it E-mail address:} bertola@mathstat.concordia.ca, eblackst@umich.edu, Alexander.Katsevich@ucf.edu,\\Alexander.Tovbis@ucf.edu}
\date{}

\begin{abstract}
In this paper we study the small-$\lambda$ spectral asymptotics of an integral operator $\mathscr{K}$ defined on two multi-intervals $J$ and $E$, when the multi-intervals touch each other (but their interiors are disjoint). The operator $\mathscr{K}$ is closely related to the multi-interval Finite Hilbert Transform (FHT). This case can be viewed as a singular limit of self-adjoint Hilbert-Schmidt integral operators with so-called integrable kernels, where the limiting operator is still bounded, but has a continuous spectral component. The regular case when $\text{dist}(J,E)>0$, and $\mathscr{K}$ is of the Hilbert-Schmidt class, was studied in an earlier paper by the authors. The main assumption in this paper is that $U=J\cup E$ is a single interval. We show that the eigenvalues of $\mathscr{K}$, if they exist, do not accumulate at $\l=0$. Combined with the results in an earlier paper by the authors, this implies that $H_p$, the subspace of discontinuity (the span of all eigenfunctions) of $\mathscr{K}$, is finite dimensional and consists of functions that are smooth in the interiors of $J$ and $E$. We also obtain an approximation to the kernel of the unitary transformation that diagonalizes $\mathscr{K}$, and obtain a precise estimate of the exponential instability of inverting $\mathscr{K}$. Our work is based on the method of Riemann-Hilbert problem and the nonlinear steepest-descent method of Deift and Zhou.
\end{abstract}

\maketitle

\section{Introduction}\label{sec-intro}

Let $J,E$ be multi-intervals, where a multi-interval is defined to be a union of finitely many non-intersecting, closed, possibly unbounded, intervals, each with non empty interior.  We assume that $J,E$ are bounded and $\mathring{J}\cap\mathring{E}=\emptyset$, where $\mathring{J}$ denotes the interior of the set $J$. Endpoints of $J$ or $E$ which belong to both $J,E$ are called \textit{double} endpoints. Let
\begin{align}\label{kernel K}
    K(x,y):=\frac{\chi_J(x)\chi_E(y)-\chi_J(y)\chi_E(x)}{\pi(x-y)},
\end{align}
where $\chi_J$ denotes the characteristic function of $J$, and define the operator $\mathscr{K}:L^2(U)\to L^2(U)$, with $U:=J\cup E$, as
\begin{align}\label{operator K}
    \mathscr{K}[f](x):=\int_{U}K(x,y)f(y)dy.
\end{align}
The operator $A:L^2(J)\to L^2(E)$, defined as
\begin{align}\label{FHT A}
    A[f](x)=\frac{1}{\pi}\int_J\frac{f(y)}{y-x}dy,
\end{align}
is known as the finite Hilbert transform (FHT). Let $A^\dagger$ denote its adjoint.  It is simple to verify that $A^\dagger:L^2(E)\to L^2(J)$ is given by
\begin{align}
    A^\dagger[g](y)=\frac{1}{\pi}\int_E\frac{g(x)}{y-x}dx,
\end{align}
$\mathscr{K}$ is self-adjoint, $\mathscr{K}=A\oplus A^\dagger$, $\mathscr{K}$ can be represented in matrix form as
\be\label{K-def}
    \mathscr{K}=\begin{bmatrix} 0 & A \\ A^\dagger & 0 \end{bmatrix}:L^2(J)\oplus L^2(E)\to L^2(J)\oplus L^2(E),
\ee
and
\begin{align*}
    \mathscr{K}^2=AA^\dagger\oplus A^\dagger A=\begin{bmatrix} AA^\dagger & 0 \\ 0 & A^\dagger A \end{bmatrix}:L^2(J)\oplus L^2(E)\to L^2(J)\oplus L^2(E).
\end{align*}
The kernel of $\mathscr{K}$ is an `integrable' kernel, and thus it is well-known that the resolvent of $\mathscr{K}$ can be expressed in terms of the solution to a particular Riemann-Hilbert problem (RHP) \cite{IIKS}.

Our main interest is to study spectral properties of $\mathscr{K}$ and the operators $A^\dagger A$ and $A A^\dagger$, which depend in an essential way on the geometry of $J$ and $E$.  The case when $U:=J\cup E=\mathbb R$ was considered in \cite{BKT19}, where it was shown that the spectrum of $A^\dagger A$ and $A A^\dagger$ is the segment $[0,1]$, the spectrum is purely absolutely continuous, and its multiplicity equals to the number of double endpoints.  An explicit diagonalization of these two operators was also discovered, see \cite[Theorem 5.3]{BKT19}.  In a subsequent paper \cite{BKT20} the setting is exactly the same as in this paper, i.e. $U=J\cup E \subsetneq \mathbb R$. Using the results of \cite{BKT19}, the following Theorem (\cite[Theorem 2.1]{BKT20}) was established:


\begin{theorem}\label{thm: spec prop}
Let $\mathscr K = A \oplus A^\dagger: L^2(U) \to L^2(U)$ be the operator given by \eqref{K-def} and let $\Sp(\mathscr K)$ denote the spectrum of $\mathscr{K}$. Here $U=J\cup E$, and $J,E\subset\R$ are multi-intervals with disjoint interiors. One has:
\begin{enumerate}
\item $\Sp(\mathscr K)\subseteq [-1,1]$;
\item The points $\l=\pm 1$ and $\l=0$ are not eigenvalues of $\mathscr K$. The eigenvalues of $\mathscr K$, if they exist, are {symmetric with respect to $\l=0$ and have finite multiplicities}. Moreover, they can accumulate only at $\l=0$;
\item If there are $n\ge 1$ ($n\in\N$) double endpoints, then $\Sp_{ac}(\mathscr K) = [-1,1]$, and the multiplicity of $\Sp_{ac}(\mathscr K)$ equals $n$;
 \item If there are no double endpoints, then $\mathscr K$ is of trace class. In this case, $\Sp(\mathscr K)$ consists only of eigenvalues and $\l=0$, which is the accumulation point of the eigenvalues; 
\item The singular continuous component is empty, i.e., $\Sp_{sc}(\mathscr K) = \emptyset$.
\end{enumerate}
\end{theorem}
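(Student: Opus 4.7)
The plan is to combine soft self-adjoint spectral theory for the gross structural parts (1), (2), (4) with the integrable-operator technology of \cite{IIKS} for the fine structure of the continuous part.

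For part (1), $A$ is a compression of the Hilbert transform, so $\|A\|\le 1$ and hence $\|\mathscr{K}\|\le 1$; self-adjointness then gives $\Sp(\mathscr{K})\subseteq[-1,1]$. The symmetry of the spectrum about $\lambda=0$ in part (2) follows from the fact that the unitary involution $\Pi(f_J,f_E):=(f_J,-f_E)$ anticommutes with $\mathscr{K}$. That $\lambda=0$ is not an eigenvalue I would show by a Cauchy-transform argument: $\mathscr{K}f=0$ forces $Af_J=0$ on $E$, so the transform $c_J(z):=\frac{1}{2\pi i}\int_J\frac{f_J(y)}{y-z}\,dy$ is holomorphic on the connected set $\mathbb{C}\setminus J$ and vanishes on $E$, hence identically; its Plemelj jump across $J$ then yields $f_J\equiv 0$, and similarly $f_E\equiv 0$. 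A parallel Plemelj--Sokhotski argument combined with growth control at the endpoints excludes $\lambda=\pm 1$.

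Next, to identify the essential spectrum I would compare with the full-line operator of \cite{BKT19}. Using the explicit diagonalization \cite[Theorem 5.3]{BKT19}, one constructs Weyl sequences localized near any given double endpoint and concentrated in spectral parameter at an arbitrary $\lambda_0\in(-1,1)$; these produce the inclusion $[-1,1]\subseteq\Sp_{\mathrm{ess}}(\mathscr{K})$ when $n\ge 1$, and with part (1) one has equality. Standard self-adjoint theory then forces the eigenvalues of $\mathscr{K}$ to be isolated with finite multiplicity, accumulating only at the boundary of $\Sp_{\mathrm{ess}}$; combined with the already-proved exclusion of $\pm 1$, only $0$ remains as a possible accumulation point, which together with the symmetry of part (2) completes the eigenvalue claims in parts (2) and (3). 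For the AC multiplicity equalling $n$, I would analyze the local RHP near each double endpoint: its $2\times 2$ jump matrix yields exactly one channel of generalized eigenfunctions per such point. Part (4) is softer: when $\mathrm{dist}(J,E)>0$, the kernel $K(x,y)$ is smooth on the compact set $U\times U$, so $\mathscr{K}$ is of trace class and the spectrum outside $\{0\}$ is purely discrete.

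The principal obstacle is part (5), the absence of a singular continuous spectrum. My plan is to invoke the Its--Izergin--Korepin--Slavnov representation of $(\mathscr{K}-\lambda)^{-1}$ in terms of the unique solution $\Gamma_\lambda(z)$ of a $2\times 2$ Riemann--Hilbert problem whose jumps depend analytically on $\lambda\notin\{\pm 1\}$. The aim is a limiting absorption principle: for $\lambda\in(-1,1)$ outside the point spectrum, show that the boundary values $\Gamma_{\lambda\pm i0}$ exist and depend H\"older-continuously on $\lambda$ in a suitable norm. Via the Titchmarsh--Kato criterion, this yields purely absolutely continuous spectral measure on $(-1,1)$ away from the discrete spectrum, hence $\Sp_{sc}(\mathscr{K})=\emptyset$. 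The delicate step is controlling $\Gamma_\lambda$ uniformly near the double endpoints and near $\lambda=\pm 1$, where the RHP degenerates; this would be handled by a Deift--Zhou steepest-descent reduction to standard solvable local parametrices (of Bessel type at the double endpoints), which is indeed the technical heart of the remainder of the paper.
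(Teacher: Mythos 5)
The statement you are asked to prove is not proved in this paper at all: it is quoted verbatim as \cite[Theorem~2.1]{BKT20}, so there is no in-paper argument to compare your proposal against. Judging your sketch on its own merits, the soft parts are fine: the compression bound $\Vert A\Vert\le 1$ for (1), the anticommutation $\Pi\mathscr{K}=-\mathscr{K}\Pi$ for spectral symmetry, the Cauchy-transform/Plemelj argument showing $\ker\mathscr{K}=\{0\}$, and the trace-class conclusion for (4) when $\mathrm{dist}(J,E)>0$ are all sound.

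However, the middle of your argument for part (2)/(3) has a genuine gap. After concluding $\Sp_{\mathrm{ess}}(\mathscr{K})=[-1,1]$ via Weyl sequences, you write that ``standard self-adjoint theory then forces the eigenvalues of $\mathscr{K}$ to be isolated with finite multiplicity, accumulating only at the boundary of $\Sp_{\mathrm{ess}}$.'' That is false: this conclusion of abstract spectral theory applies only to eigenvalues \emph{outside} the essential spectrum. Once $\Sp_{\mathrm{ess}}(\mathscr{K})=[-1,1]$, any eigenvalue in $(-1,1)$ is embedded, and an embedded eigenvalue of a bounded self-adjoint operator can a priori have infinite multiplicity, be dense, or accumulate anywhere in $[-1,1]$. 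The finite multiplicity, the discreteness, and the restriction that accumulation can only happen at $\lambda=0$ are nontrivial facts that in \cite{BKT20} come out of the Riemann--Hilbert analysis: the continued resolvent $R(x,y;\lambda_\pm)$ is a meromorphic function of $\lambda$ (in fact of the conformal variable $\rho$ in \eqref{rhosurf}), its poles are simple and isolated, the residues are finite-rank because they are expressed through the $2\times 2$ matrix $\Gamma$, and $\rho=\pm\frac12$ are shown not to be poles. So the same RHP machinery you invoke for part (5) is already needed here; it cannot be replaced by a soft Weyl-type argument. A secondary, smaller issue: the exclusion of $\lambda=\pm 1$ as eigenvalues is dismissed as a ``parallel Plemelj--Sokhotski argument,'' but the argument you gave at $\lambda=0$ exploits the fact that $\mathscr{K}f=0$ fully decouples into two independent Cauchy-vanishing conditions, and no analogous decoupling occurs at $\lambda=\pm1$; this step needs its own justification (e.g.\ via the relation to $A^\dagger A$ and the fact that the finite Hilbert transform does not attain its norm).
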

\noindent

A similar Theorem regarding the spectral properties of $A^\dagger A$ and $A A^\dagger$ was also established, see \cite[Theorem 2.4]{BKT20}.

The main goal of this paper is to show that when $U$ consists of a single interval, then the eigenvalues of $\mathscr K$ do not accumulate at zero. Combined with the above Theorem this implies that $\mathscr K$ (and, therefore, both $A^\dagger A$ and $A A^\dagger$) has finitely many eigenvalues. This is achieved by reducing the spectral analysis of $\mathscr K$ to solving a RHP, and finding its approximate solution using the Deift-Zhou steepest descent method. As a byproduct of our analysis, we find an approximation to the kernel of the unitary transformation that diagonalizes $\mathscr{K}$ (and, therefore, $A^\dagger A$, and $A A^\dagger$), and obtain a precise estimate of the exponential instability (or, the degree of ill-posedness) of inverting the operators $\mathscr{K}$, $A^\dagger A$, and $A A^\dagger$. 

More precisely, we show that as $\l\to0$ ($\ka=-\ln|\l/2|\to\infty$), to leading order solving the equation $\mathscr{K}f=\phi$ is equivalent to computing integrals of the form
\be\begin{split}
    \int^{\infty}_{*} e^{\ka}{\begin{Bmatrix}\cos\\ \sin
    \end{Bmatrix}}(\gg_{\text{im}}'(x_0)(x-x_0)\ka)\tilde G_k(\ka)d\ka,
\end{split}
\ee
where $\tilde G_k(\ka)$ can be computed from the right-hand side $\phi$ in a stable way (i.e., as a bounded operator between the appropriate $L^2$ spaces). This means that the degree of ill-posedness of inverting $\mathscr{K}$ is $\exp\left(\ka/|\gg_{\text{im}}'(x_0)|\right)$. Our result provides a relationship between the instability of inverting $\mathscr{K}$ and the geometry of the problem (location of the intervals $J$ and $E$), which is encapsulated in the function $\gg_{\text{im}}'(x_0)$. According to Proposition \ref{prop: g-function}, the function $\gg_{\text{im}}'$ is positive on $\mathring{E}$, negative on $\mathring{J}$ and has $O((z-b_j)^{-\hf})$ behavior at double points $b_j$. Similar results are valid for inverting the operators $A^\dagger A$ and $A A^\dagger$.

For an interval $I\subset \mathbb R$, let $L^2(I,\mathbb R^n)$ denote the Hilbert space of $n$-dimensional real vector-valued functions defined on $I$: $\vec h(\l)=(h_1(\l),\dots,h_n(\l))$, $\l\in I$, with the standard definition of the norm:
\be\label{hnorm}
\Vert \vec h \Vert_{L^2(I,\mathbb R^n)}^2:=\sum_{j=1}^n\int_I h_j^2(\l) d\l.
\ee

Let $H_p, H_{c},  H_{ac}\subset L^2(U)$ denote the subspaces of discontinuity (the span of all eigenfunctions), continuity, and absolute continuity, respectively, with respect to $\mathscr{K}$. By Theorem~\ref{thm: spec prop}, $H_c=H_{ac}$. If $H_p\not=\emptyset$, let $L'$ be the number of distinct positive eigenvalues of $\mathscr{K}$: $0<\l_1 < \dots < \l_{L'}$. It follows from statement (2) of Theorem~\ref{thm: spec prop} that $-\l_j$, $j=1,2,\dots,L'$, are the negative eigenvalues of $\mathscr{K}$. Let $\Id$ denote the identity operator.

\begin{theorem}[main result]\label{thm:main}
Assume there are $n\geq1$ double points and no gaps, i.e. $U=J\cup E=[a_1,a_2]$.  One has:
\begin{enumerate}
\item If $H_p\not=\emptyset$, then $2L:=\text{dim}(H_p)<\infty$ and $H_p\subset C^\infty(\mathring{J}\cup\mathring{E})$.
\item There exist $n$ bounded operators $\Q_j:\,L^2(U)\to L^2([-1,1],\R)$, $1\leq j\leq n$, and a projector $P:L^2(U)\to \mathbb R^{2L}$ such that the operator $\U:L^2(U)\to L^2([-1,1],\mathbb{R}^n)\times \mathbb{R}^{2L}$ given by 
\be\label{main-diag-oper}
\U f=(\U_{ac}f, Pf),\ \U_{ac}:=(\Q_1,\dots,\Q_n):\,L^2(U)\to L^2([-1,1],\R^n),
\ee
is unitary, and 
\be\begin{split}\label{diag-two-comps}
   \U \mathscr{K}\U^\dagger=&\lambda\Id \text{ on }L^2([-1,1],\mathbb R^n),\\
   \U \mathscr{K}\U^\dagger=&D \text{ on }\mathbb R^{2L}.
\end{split}
\ee
Here $D$ is a diagonal matrix, consisting of the eigenvalues of $\mathscr{K}$ repeated according to their multiplicity. The first line is understood in the sense of operator equality, where $\lambda$ is a multiplication operator $\mathbb R^n\to \mathbb R^n$. The operators $\Q_j$ are not unique. If $H_p=\emptyset$, then $P$ is absent from \eqref{main-diag-oper} (i.e., $T=T_{ac}$), and the second line is absent from \eqref{diag-two-comps}.
\item The operators $\Q_j$ can be found so that their kernels $Q_j(x;\l)$ satisfy 
\be\label{Qj smoothness}
Q_j\in C^{\infty}\left((\mathring{J}\cup\mathring{E})\times \left((-1,1)\setminus\Xi\right)\right), 1\leq j\leq n,
\ee
where $\Xi$ is a finite set, and, for any closed interval $I\subset \mathring{J}\cup \mathring{E}$, $n\ge 0$, and $\e>0$, 
\be\label{E phi 3 II}
\sup_{x\in I,\e<|\l|<1}\left|(d/dx)^{n} Q_j(x;\l)\right|<\infty.
\ee
\item Explicit expressions that approximate $|\l|^{1/2}Q_j(x;\l)$, $k=1,\ldots,n$, with accuracy $\BigO{\varkappa^{-1}}$, where $\ka=-\ln|{\l}/{2}|$, are given in Proposition~\ref{Eprime simple} below. 
\end{enumerate}
\end{theorem}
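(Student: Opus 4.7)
The plan is to prove all four parts of Theorem \ref{thm:main} simultaneously by performing a Deift-Zhou nonlinear steepest-descent analysis of the Riemann-Hilbert problem (RHP) associated, via the Its-Izergin-Korepin-Slavnov (IIKS) construction, to the resolvent $(\mathscr K-\lambda)^{-1}$ of the integrable-kernel operator $\mathscr K$, in the regime $\lambda\to 0$, equivalently $\varkappa=-\ln|\lambda/2|\to\infty$. The small parameter $\varkappa^{-1}$ will play the role that $1/N$ does in random-matrix asymptotics.

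First I would write down the $2\times 2$ IIKS RHP whose jump on $U$ is determined by the vectors $(\chi_J,\chi_E)$ and $(\chi_E,-\chi_J)$, modulated by $\lambda$; the transmission from $J$ to $E$ across a double point $b_j$ encodes the essential spectrum of multiplicity $n$. Following the standard $Y\!\to T\!\to S\!\to R$ chain, I would conjugate by a scalar $g$-function tailored to the geometry, whose existence and properties (in particular positivity/negativity of $\gg'_{\text{im}}$ on $\mathring E,\mathring J$ and the $(z-b_j)^{-1/2}$ blow-up at double points) are recorded in Proposition \ref{prop: g-function}. After opening lenses, the jump on the lens boundaries becomes $I+\mathcal O(e^{-c\varkappa})$, and there remains a global (outer) parametrix together with local parametrices near the outer endpoints $a_1,a_2$ and near each double point $b_j$.

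Once the outer parametrix $P^{(\infty)}$ has been constructed explicitly (in terms of algebraic data on the associated hyperelliptic curve) and matched to local model parametrices, the residual problem for $R$ has jump $I+\mathcal O(\varkappa^{-1})$ on small circles, so $R=I+\mathcal O(\varkappa^{-1})$ uniformly on compact subsets of $(\mathring J\cup\mathring E)\times \bigl((-1,1)\setminus\Xi\bigr)$, where $\Xi$ is the finite set of $\lambda$'s at which the outer parametrix degenerates. Using Stone's formula, $dE_{\mathscr K}/d\lambda=(2\pi i)^{-1}\bigl[(\mathscr K-\lambda-i0)^{-1}-(\mathscr K-\lambda+i0)^{-1}\bigr]$, together with the boundary values of the RHP solution, I would read off $n$ spectral densities on $[-1,1]$; their (normalized) square roots furnish kernels $Q_j(x;\lambda)$ of bounded operators $\Q_j:L^2(U)\to L^2([-1,1],\mathbb R)$ satisfying \eqref{Qj smoothness}--\eqref{E phi 3 II}, because $P^{(\infty)}$ depends smoothly on $(x,\lambda)$ on that set. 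That the resulting $\U_{ac}=(\Q_1,\dots,\Q_n)$ is a partial isometry onto $L^2([-1,1],\mathbb R^n)$ follows from Theorem \ref{thm: spec prop}(3), which fixes the multiplicity of $\Sp_{ac}(\mathscr K)$ to be exactly $n$, while the complementary projector $P$ onto $H_p$ completes $\U$ to a unitary diagonalization \eqref{diag-two-comps}. The explicit approximation to $|\lambda|^{1/2}Q_j(x;\lambda)$ with accuracy $\mathcal O(\varkappa^{-1})$ required in part (4) is then precisely the contribution of the outer parametrix, which is why the phase $\gg'_{\text{im}}(x_0)\varkappa$ appears.

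For part (1), the crucial consequence of the above construction is that the RHP is solvable for every $\lambda$ with $0<|\lambda|<\lambda_0$ for some $\lambda_0>0$; hence $\mathscr K-\lambda$ is invertible there and no eigenvalues of $\mathscr K$ accumulate at $0$. Combined with Theorem \ref{thm: spec prop}, which already excludes $0,\pm 1$ as eigenvalues and rules out accumulation anywhere except at $0$, this yields $\dim H_p<\infty$; the $C^\infty$ regularity of eigenfunctions on $\mathring J\cup\mathring E$ is inherited from the analyticity of the RHP solution in the interiors combined with the eigenfunction identity $f=\lambda^{-1}\mathscr K f$, since $\mathscr K$ has a smoothing effect away from endpoints. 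The step I expect to be the hardest is the construction of the local parametrix at the double points: because $J$ and $E$ touch there and $\gg'_{\text{im}}$ has an inverse square-root singularity, the standard Airy/Bessel catalogue does not apply and one must instead solve a confluent-hypergeometric (parabolic-cylinder-type) model RHP, whose matching data feed directly into the constant $|\gg'_{\text{im}}(x_0)|^{-1}$ controlling the exponential instability in the final estimate.
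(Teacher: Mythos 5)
Your overall architecture -- IIKS Riemann--Hilbert problem, $g$-function conjugation, lens opening, outer and local parametrices, small-norm residual of size $\mathcal O(\varkappa^{-1})$, Stone's formula, pole-freeness $\Rightarrow$ no accumulation of eigenvalues -- matches the paper's strategy. Two details are off: the local parametrix at the touching points is hypergeometric (built from the explicit two-interval solution $\Gamma_4$ of \cite{BBKT19}), not parabolic-cylinder; and the phrase ``$\mathscr K-\lambda$ is invertible there'' is wrong for $\lambda\in(-1,1)$, since $(-1,1)\subset\Sp_{ac}(\mathscr K)$. The correct statement, which the paper proves in Lemma~\ref{lemma: gamma no pole}, is that the analytically continued $\Gamma(z;\lambda)$ is \emph{pole-free} in $\lambda$ for $0<|\lambda|<\lambda_0$, and poles of $\Gamma$ are the mechanism by which eigenvalues appear.

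The genuine gap is in parts (3) and (4). You write that the kernels $Q_j$ and their smoothness in $(x,\lambda)$ ``follow because $P^{(\infty)}$ depends smoothly on $(x,\lambda)$,'' as if one could take a square root of $E'_{ac}(x,y;\lambda)$ and read off $Q_j$. This does not work without more argument: the factorization
\[
E'_{ac}(x,y;\lambda)=\sum_{j=1}^n Q_j(x;\lambda)Q_j(y;\lambda)
\]
is only determined up to left multiplication by an arbitrary $O(n)$-valued function $V(\lambda)$, and the canonical $Q_j(x;\lambda)=\E'(\lambda)g_j$ produced by the abstract spectral theorem (\cite{ag80}) is generically \emph{not} smooth in $\lambda$. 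The paper's Section 5.2 is entirely devoted to this: one samples $E'_{ac}$ at $n$ moving points $x_k(\lambda)$, chosen so that the $n\times n$ matrix $\hat E'(\lambda)$ is uniformly nondegenerate as $\lambda\to0$ (Corollary~\ref{cor:he-nondeg}, which itself requires the explicit leading-order approximation of Proposition~\ref{Eprime simple} plus a root-finding argument exploiting $\gg'_{\mathrm{im}}\neq 0$); takes its Cholesky factor; and then glues the resulting local kernels across $\lambda$ using the connectedness of $SO(n)$. None of this is implicit in the smoothness of the outer parametrix. Similarly, for part (4), the fact that $E'_{ac}$ and the approximant $L$ are $\mathcal O(\varkappa^{-1})$-close does \emph{not} immediately give $|\lambda|^{1/2}Q_j\approx G_j$ -- one needs the Cholesky-factor perturbation bounds of Sun and Stewart (paper's eqs.~\eqref{chol-pert}--\eqref{chol-pert-bnd}) and a compatible choice of $SO(n)$ gauges $V_G$, $V_Q$. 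Your proposal names the right ingredients at the level of the RH analysis, but omits the whole second half of the proof, which is matrix-analytic rather than RH-theoretic.
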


For an interval $I\subset U$, let $\pi_I^\dagger:L^2(I)\to L^2(U)$ denote the extension by zero of a function defined on $I$ to all of $U$.
\begin{corollary}\label{cor:main}
Assume there are $n\geq1$ double points and no gaps, i.e. $J\cup E=[a_1,a_2]$. The following assertions hold if $I=J$, $B=A^\dagger A$ and if $I=E$, $B=AA^\dagger$. 
There exists a projector $P_I:L^2(I)\to \mathbb R^L$ such that the operator $\U_I:L^2(I)\to L^2([0,1],\mathbb R^n)\times R^L$ given by 
\be\label{main-diag-K2}
\U_I f=(\U_{ac,I}f, P_I f),\ \U_{ac,I}:=\U_{ac}\pi_I^\dagger:\,L^2(I)\to L^2([0,1],\mathbb R^n),
\ee
where $\U_{ac}$ is the same as in Theorem~\ref{thm:main}, is unitary, and 
\be\begin{split}\label{diag-two-K2}
   \U_I B\U_I^\dagger=&\lambda^2\Id \text{ on }L^2([0,1],\mathbb R^n),\\
   \U_I B\U_I^\dagger=&D \text{ on }\mathbb R^L.
\end{split}
\ee
Here $D$ is a diagonal matrix, consisting of the squares of positive eigenvalues of $\mathscr{K}$ repeated according to their multiplicity. The first line is understood in the sense of operator equality, where $\lambda^2$ is a multiplication operator $\mathbb R^n\to\mathbb R^n$. If $H_p=\emptyset$, then $P_I$ is absent from \eqref{main-diag-K2} (i.e., $\U_I=\U_{ac,I}$), and the second line is absent from \eqref{diag-two-K2}. 
\end{corollary}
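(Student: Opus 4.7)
The plan is to derive Corollary~\ref{cor:main} from Theorem~\ref{thm:main} by exploiting the $\mathbb Z_2$-symmetry that anticommutes with $\mathscr K$. Introduce the involution $\sigma:=\chi_J-\chi_E$ acting on $L^2(U)$; its $+1$ and $-1$ eigenspaces are precisely the images $\pi_J^\dagger L^2(J)$ and $\pi_E^\dagger L^2(E)$. A direct inspection of the kernel~\eqref{kernel K} yields $\sigma\mathscr K\sigma=-\mathscr K$, while $\mathscr K^2=A^\dagger A\oplus AA^\dagger$ preserves the splitting $L^2(U)=L^2(J)\oplus L^2(E)$ and acts as $B$ on $L^2(I)$ in each of the two cases considered.

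Next, transport the anticommutation through the unitary $\U$ of Theorem~\ref{thm:main}. Since $\U\sigma\U^\dagger$ must anticommute with $\U\mathscr K\U^\dagger$, and the latter is multiplication by $\lambda$ on the absolutely continuous component $L^2([-1,1],\mathbb R^n)$, the former is forced to have the form $h(\lambda)\mapsto M(\lambda)h(-\lambda)$ for some measurable orthogonal-valued $M(\lambda)$; on $\mathbb R^{2L}$ it interchanges the one-dimensional eigenspaces of $\pm\lambda_k$. The $+1$-eigenspace of $\U\sigma\U^\dagger$ is therefore canonically unitarily isomorphic to $L^2([0,1],\mathbb R^n)\times\mathbb R^L$ via restriction to $\lambda\in[0,1]$ together with a $\sqrt 2$ compensating factor on the ac part, and a projection onto the $L$-dimensional symmetric subspace of the $2L$-dimensional point component; the $-1$-eigenspace admits an identical identification.

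Composing $\U\pi_J^\dagger$ (respectively $\U\pi_E^\dagger$) with this identification produces the required unitary $\U_I$ and projector $P_I$, with $I=J$ yielding $B=A^\dagger A$ and $I=E$ yielding $B=AA^\dagger$. The diagonalization~\eqref{diag-two-K2} is then immediate: by Theorem~\ref{thm:main}, $\U\mathscr K^2\U^\dagger$ equals $\lambda^2\cdot\Id$ on the ac part and the square of the eigenvalue diagonal on the point part, both of which commute with $\U\sigma\U^\dagger$ and hence restrict to the $\pm 1$-eigenspaces as required; each pair $\pm\lambda_k$ contributes a single $\lambda_k^2$ eigenvalue to the symmetric subspace, giving the diagonal matrix $D$ of the corollary.

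The main obstacle is to verify that the concise writing $\U_{ac,I}=\U_{ac}\pi_I^\dagger$ in the statement is internally consistent, i.e.\ that $\U_{ac}\pi_J^\dagger f$ is automatically even in $\lambda$ and $\U_{ac}\pi_E^\dagger f$ automatically odd, so that both are determined by their values on $[0,1]$. Concretely this amounts to checking the parity relation $Q_j(x;-\lambda)=\pm Q_j(x;\lambda)$ for $x\in J$, respectively $x\in E$, which should be read off from the explicit formulas of Proposition~\ref{Eprime simple}. Once this parity is established, the orthogonal-valued function $M(\lambda)$ reduces to $\pm\Id$ (up to permutation of indices at the isolated points of $\Xi$), and the otherwise abstract identification becomes completely explicit, completing the proof.
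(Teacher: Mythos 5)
The paper's own proof is a single sentence: ``Combining Theorem~\ref{thm:main} and Proposition~\ref{prop:Ksq} proves Corollary~\ref{cor:main}.'' The engine is Proposition~\ref{prop:Ksq}, which uses the Neumann series of the resolvent to show $\Id+\pi_J\mathscr R(\lambda)\pi_J=(\Id-\lambda^{-2}A^\dagger A)^{-1}$ and $\Id+\pi_E\mathscr R(\lambda)\pi_E=(\Id-\lambda^{-2}AA^\dagger)^{-1}$; from there the spectral family of $B$ is the compression of that of $\mathscr K^2$ to $L^2(I)$, and the diagonalization is read off from Theorem~\ref{thm:main}. You take a different route, working instead through the involution $\sigma=\chi_J-\chi_E$, its anticommutation with $\mathscr K$, and the transported symmetry $\U\sigma\U^\dagger$ on the spectral side. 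The two approaches encode the same structural fact---$\mathscr K^2$ is block-diagonal with respect to $L^2(J)\oplus L^2(E)$---but yours makes the $\mathbb Z_2$-symmetry explicit, which is conceptually appealing.

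The gap in your argument is the parity claim, which you yourself flag as ``the main obstacle.'' You want $Q_j(x;-\lambda)=\pm Q_j(x;\lambda)$ with the sign depending only on whether $x\in J$ or $x\in E$, and you propose to read this off from Proposition~\ref{Eprime simple}. But that proposition concerns only the leading-order \emph{approximate} kernels $G_j$, which visibly have the parity because of the $\mathrm{sgn}(\lambda)$ factor in \eqref{f prop} and its absence in \eqref{h prop}; an $\BigO{\varkappa^{-1}}$ approximation does not transmit parity to the exact $Q_j$. Tracing the paper's definition, $Q_j(x;\lambda)=\bigl(\E'(\lambda)g_j\bigr)(x)$ for a generating basis $g_j\in H_{ac}$. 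Although $E'_{ac}(x,y;\lambda)$ is even in $\lambda$ on $J\times J$ and $E\times E$ and odd on cross terms (Proposition~\ref{prop: DeltaAbsLambdaRJRE ids}), this forces a definite $\lambda$-parity of $Q_j(x;\cdot)$ only if each $g_j$ is supported entirely in $J$ or entirely in $E$---a constraint neither stated nor used in the paper's construction, where $g_j=W\tilde h_j$ carries no support restriction. Without that, the orthogonal-valued $M(\lambda)$ in your analysis of $\U\sigma\U^\dagger$ need not reduce to $\pm\Id$, and you cannot conclude that $\U_{ac}\pi_I^\dagger$ factors through restriction to $[0,1]$ in the simple way the corollary's notation suggests. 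You are right that a $\sqrt2$ normalization has to be reconciled with the corollary's formulas; that is a real subtlety. But to close the argument you must either choose the generating basis with the required support property, or bypass the parity question entirely by invoking the resolvent relation of Proposition~\ref{prop:Ksq} as the paper does.
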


The paper is organized as follows. In Section \ref{sec: K + RHP}, we introduce a RHP whose solution is denoted by $\Gamma(z;\lambda)$ and show that the kernel of the resolvent of $\mathscr{K}, AA^\dagger, A^\dagger A$ is expressed in terms of $\Gamma(z;\lambda)$.  {We also prove Corollary~\ref{cor:main}, assuming Theorem~\ref{thm:main} holds.}
Section \ref{sec: Gamma} is dedicated to computing the small-$\lambda$ asymptotics of $\Gamma(z;\lambda)$ in various regions of the complex plane.  In Section \ref{sec:jump}, we compute the small-$\lambda$ asymptotics of the jump of the kernel of the resolvent $\mathscr{R}$ in the $\lambda$-plane and express said jump as a quadratic form. In Section \ref{sec: exact to approx}, we finish the proof of Theorem~\ref{thm:main}, obtain small-$\l$ approximation to the kernels $|\l|^{1/2}Q_j(x;\l)$, and compute the degree of ill-posedness of inverting $\mathscr{K}$.  Lastly, supplementary material can be found in the Appendices.  A beautiful proof that certain complicated matrix is positive definite, which is based on the calculus of residues, deserves a special mention.  

\begin{remark}[Notation]
Throughout this text we will encounter several functions which are multi-valued on oriented curves (or union of oriented curves) in the complex plane.  Let $\Sigma\subset\mathbb{C}$ be an oriented curve with a $+$ and $-$ side, determined by the orientation, and suppose $f:\mathbb{C}\setminus\Sigma\to\mathbb{C}$ is analytic.  For any $w\in\Sigma$, we let $f(w_+)$, $f(w_-)$ denote the limiting value of $f(z)$ as $z\notin\Sigma$ approaches $w\in\Sigma$, non-tangentially, from the $+$, $-$ side of $\Sigma$, respectively.
\end{remark}

\section{The resolvent and resolution of the identity of $\mathscr{K}$}\label{sec: K + RHP}

We begin with the RHP for $\Gamma(z;\lambda)$ and show its relation to the operator $\mathscr{K}$.
\begin{problem}\label{RHPGamma}
Find a $2\times2$ matrix-function $\Gamma(z;\lambda)$ which, for any $\lambda\in\mathbb{C}\setminus[-1,1]$, satisfies:
\begin{enumerate}
    \item $\Gamma(z;\lambda)$ is analytic for $z\in\overline{\mathbb{C}}\setminus U$,
    \item $\Gamma(z;\lambda)$ has the jump condition
    \begin{align}\label{gamma jump}
        \Gamma(z_+;\lambda)=\Gamma(z_-;\lambda)\left(\1-\frac{2i}{\lambda}f(z)g^t(z)\right), \qquad z\in \mathring{J}\cup\mathring{E},
    \end{align}
    where $\1$ is the identity matrix, $^t$ denotes transpose, and
    \begin{align}\label{vec fg}
        f^t(z):=\begin{bmatrix} \chi_E(z) & \chi_J(z) \end{bmatrix}, \qquad g^t(z):=\begin{bmatrix} -\chi_J(z) & \chi_E(z) \end{bmatrix},
    \end{align}
    \item $\Gamma(\infty;\lambda)=\1$,
    \item $\Gamma(z_\pm;\lambda)\in L^2_{\mathrm{loc}}(U)$.
\end{enumerate}
\end{problem}
Notice that \eqref{gamma jump} is equivalent to
\begin{align}
    \Gamma(z_+;\lambda)&=\Gamma(z_-;\lambda)\begin{cases}
     \begin{bmatrix} 1 & -\frac{2i}{\lambda} \\ 0 & 1 \end{bmatrix}, & z\in \mathring{E}, \\
     \begin{bmatrix} 1 & 0 \\ \frac{2i}{\lambda} & 1 \end{bmatrix}, & z\in \mathring{J}.
    \end{cases}
\end{align}
Condition (4) of RHP \ref{RHPGamma} guarantees that if a solution exists (a solution does indeed exist, see below \eqref{res kernel}), it must be unique. For convenience of matrix calculations, throughout the paper we use the Pauli matrices
$$
\s_1= \begin{bmatrix}
0 & 1\\
1&0
\end{bmatrix},~~~
\s_2= \begin{bmatrix}
0 & -i\\
i&0
\end{bmatrix},~~~
\s_3= \begin{bmatrix}
1 & 0\\
0&-1
\end{bmatrix}.
$$
\begin{remark}\label{rem-sym}
Due to the uniqueness of the solution to RHP \ref{RHPGamma}, we can easily discover the symmetries
\begin{align}
    \Gamma(z;\lambda)=\overline{\Gamma(\overline{z};\overline{\lambda})}, \qquad \Gamma(z;\lambda)=\sigma_3\Gamma(z;-\lambda)\sigma_3,\label{sig2 conj}
\end{align}
because $\Gamma(z;\lambda)$, $\overline{\Gamma(\overline{z};\overline{\lambda})}$, and $\sigma_3\Gamma(z;-\lambda)\sigma_3$ solve RHP \ref{RHPGamma}.  Moreover, if we wish to solve the RHP satisfying properties (1), (3), (4) from RHP \ref{RHPGamma} and the jump condition 
\begin{align*}
    \tilde{\Gamma}(z_+;\lambda)=\tilde{\Gamma}(z_-;\lambda)\left(\1+\frac{2i}{\lambda}g(z)f^t(z)\right), \qquad z\in \mathring{E}\cup\mathring{J},
\end{align*}
(i.e. the jump matrix on $\mathring{E},\mathring{J}$ from property (2) of RHP \ref{RHPGamma} have been interchanged) then
\begin{align*}
    \tilde{\Gamma}(z;\lambda)=\sigma_2\Gamma(z;\lambda)\sigma_2=\Gamma^{-t}(z;\lambda),
\end{align*}
where $^{-t}$ denote transpose followed by inverse. 
\end{remark}
Let $\mathscr{R}(\lambda):L^2(U)\to L^2(U)$ denote the resolvent of $\mathscr{K}$, defined by
\begin{align}\label{res of K}
    \Id+\mathscr{R}(\lambda)=\left(\Id-\frac{1}{\lambda}\mathscr{K}\right)^{-1}.
\end{align}

\br \label{rem-std}
The standard form of the resolvent of an operator $T$ is $\mathcal R(\lambda;T)=(\lambda{\Id}-T)^{-1}$ (according to \cite[p. 920, Section X.6]{DS57}).  Thus, the `normalized' resolvent of \eqref{res of K} satisfies
\begin{align*}
    \lambda^{-1}({\Id}+\mathscr{R}(\lambda))=\mathcal R(\lambda;T).
\end{align*}
\er
It was shown in \cite[Theorem 4.7, Section 4.2]{BKT20} that
\begin{enumerate}
    \item $\mathscr{R}(\lambda)$ is an integral operator with kernel $R:U\times U\times(\overline{\mathbb{C}}\setminus[-1,1])\to\mathbb{C}$ defined as
    \begin{align}\label{res kernel}
        R(x,y;\lambda)=\frac{g^t(y)\Gamma^{-1}(y;\lambda)\Gamma(x;\lambda)f(x)}{\pi\lambda(x-y)},
    \end{align}
    \item RHP \ref{RHPGamma} is solvable if and only if $\Id-\frac{1}{\lambda}\mathscr{K}$ has a bounded inverse { if and only if $\lambda\notin[-1,1]$},
    \item The solution of RHP \ref{RHPGamma} $\Gamma(z;\lambda)$ can be continued in $\lambda$ onto the upper, lower shores of $(-1,1)\setminus\{0\}$ and is denoted $\Gamma(z;\lambda_+), \Gamma(z;\lambda_-)$, respectively.
\end{enumerate}
The resolvent kernel $R(x,y;\lambda)$ has the symmetries $R(x,y;\lambda)=R(y,x;\lambda)$, $\overline{R(x,y;\lambda)}=R(x,y;\overline{\lambda})$ and can be expressed as 
\begin{multline}\label{full-resol}
    R(x,y;\lambda)=\chi_E(x)\chi_E(y)R_{EE}(x,y;\lambda)+\chi_E(x)\chi_J(y)R_{EJ}(x,y;\lambda) \\
    +\chi_J(x)\chi_E(y)R_{JE}(x,y;\lambda)+\chi_J(x)\chi_J(y)R_{JJ}(x,y;\lambda),
\end{multline}
where
\begin{align}\label{ker RE RJ}
    \begin{split}
    R_{EE}(x,y;\lambda)&:=\frac{\begin{vmatrix} \Gamma_{11}(y;\lambda) & \Gamma_{11}(x;\lambda) \\ \Gamma_{21}(y;\lambda) & \Gamma_{21}(x;\lambda) \end{vmatrix}}{\pi\lambda(x-y)}, \qquad R_{EJ}(x,y;\lambda):=\frac{\begin{vmatrix} \Gamma_{12}(y;\lambda) & \Gamma_{11}(x;\lambda) \\ \Gamma_{22}(y;\lambda) & \Gamma_{21}(x;\lambda) \end{vmatrix}}{\pi\lambda(x-y)}, \\
    R_{JE}(x,y;\lambda)&:=\frac{\begin{vmatrix} \Gamma_{11}(y;\lambda) & \Gamma_{12}(x;\lambda) \\ \Gamma_{21}(y;\lambda) & \Gamma_{22}(x;\lambda) \end{vmatrix}}{\pi\lambda(x-y)}, \qquad R_{JJ}(x,y;\lambda):=\frac{\begin{vmatrix} \Gamma_{12}(y;\lambda) & \Gamma_{12}(x;\lambda) \\ \Gamma_{22}(y;\lambda) & \Gamma_{22}(x;\lambda) \end{vmatrix}}{\pi\lambda(x-y)}.
    \end{split}
\end{align}
The expressions \eqref{full-resol}, \eqref{ker RE RJ} are simply obtained by expanding \eqref{res kernel}, and the symmetry properties are obtained via Remark \ref{rem-sym}.  Let $\G_j(z;\lambda)$ denote the $j$th column of the matrix $\Gamma(z;\lambda)$, $j=1,2$.
\br \label{rem-anal-columns} 
Note that $\Gamma_2(z_{+};\lambda)=\Gamma_2(z_{-};\lambda)$ on $J$. So, $\Gamma_2(z;\lambda)$ is analytic on and around $J$, whereas $\Gamma_1(z;\lambda)$ is analytic on and around $E$.
\er

Since the second column $\Gamma_2(z;\lambda)$ is analytic on $J$, and the first column $\Gamma_1(z;\lambda)$ is analytic on $E$, the symmetry condition from Remark \ref{rem-sym} yields
\be\label{GammaLambda symmetry1}
\G_2(y;\l)=\overline{\G_2(y;\bar\l)}, \qquad \G_1(x;\l)=\overline{\G_1(x;\bar\l)}
\ee
when $y\in J, x\in E$, so that the jump $\De_\l \G_j(z;\l)$ of $\G_j(z;\l)$ ($j=1,2$) over the segment of continuous spectrum on $\R$ when $y\in J, x\in E$ becomes 
\be\label{jump-Gam_2}
    \De_\l\G_2(y;\l)=-2i\Im\G_2(y;\l_-), \qquad \De_\l\G_1(x;\l)=-2i\Im\G_1(x;\l_-),
\ee
where $\Delta_xf(x):=f_+(x)-f_-(x)$.  Also notice that \eqref{GammaLambda symmetry1} implies that
\begin{align}\label{GammaLambda symmetry2}
    \begin{split}
    \Re[\Gamma_2(y;\lambda_+)]&=\Re[\Gamma_2(y;\lambda_-)], \qquad \Im[\Gamma_2(y;\lambda_+)]=-\Im[\Gamma_2(y;\lambda_-)], ~~~ y\in J, \\
    \Re[\Gamma_1(x;\lambda_+)]&=\Re[\Gamma_1(x;\lambda_-)], \qquad \Im[\Gamma_1(x;\lambda_+)]=-\Im[\Gamma_1(x;\lambda_-)], ~~ x\in E,
    \end{split}
\end{align}
for $\lambda\in(-1,1)\setminus\Sigma$, where 
\be
\Sigma:=\{0,\pm\l_1,\dots,\pm\l_{L'}\}.
\ee
Thus, $\Sigma$ is the set consisting of all the eigenvalues of $\mathscr{K}$ and $0$.  Applying the same considerations to the four kernels of \eqref{ker RE RJ}, we obtain
\begin{align}\label{jump-kerr RE RJ}
\begin{split}
    \De_{\lambda}R_{EE}(x,y;\lambda)=\frac{2i\Im\begin{vmatrix} \Gamma_{11}(y;\lambda_-) & \Gamma_{11}(x;\lambda_-) \\ \Gamma_{21}(y;\lambda_-) & \Gamma_{21}(x;\lambda_-) \end{vmatrix}}{-\pi\lambda(x-y)}, \quad \De_{\lambda}R_{EJ}(x,y;\lambda)&=\frac{2i\Im\begin{vmatrix} \Gamma_{12}(y;\lambda_-) & \Gamma_{11}(x;\lambda_-) \\ \Gamma_{22}(y;\lambda_-) & \Gamma_{21}(x;\lambda_-) \end{vmatrix}}{-\pi\lambda(x-y)}, \\
    \De_{\lambda}R_{JE}(x,y;\lambda)=\frac{2i\Im\begin{vmatrix} \Gamma_{11}(y;\lambda_-) & \Gamma_{12}(x;\lambda_-) \\ \Gamma_{21}(y;\lambda_-) & \Gamma_{22}(x;\lambda_-) \end{vmatrix}}{-\pi\lambda(x-y)}, \quad \De_{\lambda}R_{JJ}(x,y;\lambda)&=\frac{2i\Im\begin{vmatrix} \Gamma_{12}(y;\lambda_-) & \Gamma_{12}(x;\lambda_-) \\ \Gamma_{22}(y;\lambda_-) & \Gamma_{22}(x;\lambda_-) \end{vmatrix}}{-\pi\lambda(x-y)},
\end{split}
\end{align}
and thus we have the relation
\begin{multline}\label{jump-kerr}
    \Delta_\lambda R(x,y;\lambda)=\chi_E(x)\chi_E(y)\Delta_\lambda R_{EE}(x,y;\lambda)+\chi_E(x)\chi_J(y)\Delta_\lambda R_{EJ}(x,y;\lambda) \\
    +\chi_J(x)\chi_E(y)\Delta_\lambda R_{JE}(x,y;\lambda)+\chi_J(x)\chi_J(y)\Delta_\lambda R_{JJ}(x,y;\lambda).
\end{multline}
For future reference, notice that the numerators in \eqref{jump-kerr RE RJ} can be represented as 
\begin{align}
    &2i\le(\det\le[\Re\G_i(y;\lambda),\Im\G_j(x;\lambda_-)\ri]-\det\le[\Re\G_j(x;\lambda),\Im\G_i(y;\lambda_-)\ri]\ri), \label{jump-num}
\end{align}
where $(i,j)=(1,1), (2,1), (1,2), (2,2)$ corresponds to  $\De_{\lambda}R_{EE}(x,y;\lambda)$, $\De_{\lambda}R_{EJ}(x,y;\lambda)$, $\De_{\lambda}R_{JE}(x,y;\lambda)$, and $\De_{\lambda}R_{JJ}(x,y;\lambda)$, respectively. This follows immediately by writing $\G_j=\Re\G_j+i\Im\G_j$, $j=1,2$, and simplifying. 

\begin{proposition}\label{prop: DeltaAbsLambdaRJRE ids}
For $\lambda\in(-1,1)\setminus\Sigma$, we have the identities
\begin{align*}
    \Delta_\lambda R_{EE}(x,y;\lambda)&=\frac{-2i}{\pi\lambda(x-y)}\Im\begin{vmatrix} \Gamma_{11}(y;|\lambda|_-) & \Gamma_{11}(x;|\lambda|_-) \\ \Gamma_{21}(y;|\lambda|_-) & \Gamma_{21}(x;|\lambda|_-) \end{vmatrix}, & & x\in E, ~y\in E, \\
    \Delta_\lambda R_{EJ}(x,y;\lambda)&=\frac{-2i}{\pi|\lambda|(x-y)}\Im\begin{vmatrix} \Gamma_{12}(y;|\lambda|_-) & \Gamma_{11}(x;|\lambda|_-) \\ \Gamma_{22}(y;|\lambda|_-) & \Gamma_{21}(x;|\lambda|_-) \end{vmatrix}, & & x\in E, ~y\in J, \\
    \Delta_\lambda R_{JE}(x,y;\lambda)&=\frac{-2i}{\pi|\lambda|(x-y)}\Im\begin{vmatrix} \Gamma_{11}(y;|\lambda|_-) & \Gamma_{12}(x;|\lambda|_-) \\ \Gamma_{21}(y;|\lambda|_-) & \Gamma_{22}(x;|\lambda|_-) \end{vmatrix}, & & x\in J, ~y\in E, \\
    \Delta_\lambda R_{JJ}(x,y;\lambda)&=\frac{-2i}{\pi\lambda(x-y)}\Im\begin{vmatrix} \Gamma_{12}(y;|\lambda|_-) & \Gamma_{12}(x;|\lambda|_-) \\ \Gamma_{22}(y;|\lambda|_-) & \Gamma_{22}(x;|\lambda|_-) \end{vmatrix}, & & x\in J, ~y\in J,
\end{align*}
where $|\lambda|_-:=(-\lambda)_-$ when $\lambda<0$.  In particular, $\De_{\lambda}R_{EE}(x,y;\lambda)$ and $\De_{\lambda}R_{JJ}(x,y;\lambda)$ are odd in $\l$, while $\De_{\lambda}R_{EJ}(x,y;\lambda)$ and $\De_{\lambda}R_{JE}(x,y;\lambda)$ are even in $\l$. 
\end{proposition}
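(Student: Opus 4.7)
The plan is to reduce the four claimed identities to the earlier formulas \eqref{jump-kerr RE RJ}. When $\lambda>0$ we have $|\lambda|_- = \lambda_-$, so the identities are literally \eqref{jump-kerr RE RJ} and there is nothing to prove. The substantive content is the case $\lambda<0$, and the strategy is to use the two symmetries of $\Gamma$ recorded in Remark~\ref{rem-sym} to rewrite each entry $\Gamma_{jk}(\cdot;\lambda_-)$ in terms of $\Gamma_{jk}(\cdot;|\lambda|_-)$.

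Concretely, the symmetry $\Gamma(z;\lambda)=\sigma_3\Gamma(z;-\lambda)\sigma_3$ evaluated at $\lambda_-=-|\lambda|-i0$ gives $\Gamma(z;\lambda_-)=\sigma_3\Gamma(z;|\lambda|_+)\sigma_3$. By Remark~\ref{rem-anal-columns}, $\Gamma_1$ is analytic across $\mathring E$ and $\Gamma_2$ is analytic across $\mathring J$, so on these intervals the upper and lower $z$-limits agree and the conjugation symmetry $\Gamma(z;\lambda)=\overline{\Gamma(\bar z;\bar\lambda)}$ collapses to $\Gamma_1(x;|\lambda|_+)=\overline{\Gamma_1(x;|\lambda|_-)}$ for $x\in\mathring E$ and $\Gamma_2(y;|\lambda|_+)=\overline{\Gamma_2(y;|\lambda|_-)}$ for $y\in\mathring J$. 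Concatenating the two symmetries and tracking the entry-wise action of $\sigma_3$ yields, for $\lambda\in(-1,0)\setminus\Sigma$,
\begin{align*}
\Gamma_{11}(x;\lambda_-) &= \overline{\Gamma_{11}(x;|\lambda|_-)}, & \Gamma_{21}(x;\lambda_-) &= -\overline{\Gamma_{21}(x;|\lambda|_-)}, && x\in\mathring E,\\
\Gamma_{12}(y;\lambda_-) &= -\overline{\Gamma_{12}(y;|\lambda|_-)}, & \Gamma_{22}(y;\lambda_-) &= \overline{\Gamma_{22}(y;|\lambda|_-)}, && y\in\mathring J.
\end{align*}

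I would then substitute these four relations into the four determinants appearing in \eqref{jump-kerr RE RJ} and pull the complex conjugation outside. A short entry-level check shows that the ``diagonal'' determinants $D_{EE}$ (two $\Gamma_1$-columns on $\mathring E$) and $D_{JJ}$ (two $\Gamma_2$-columns on $\mathring J$) satisfy $D(\lambda_-)=-\overline{D(|\lambda|_-)}$, hence $\Im D(\lambda_-)=\Im D(|\lambda|_-)$, while the ``mixed'' determinants $D_{EJ}$ and $D_{JE}$ satisfy $D(\lambda_-)=\overline{D(|\lambda|_-)}$, hence $\Im D(\lambda_-)=-\Im D(|\lambda|_-)$. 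Combining with the identity $-\pi\lambda=\pi|\lambda|$ in the denominator for $\lambda<0$ produces the four displayed formulas. The parity claim is then immediate from the final form: writing $\lambda$ in the denominator forces odd behavior under $\lambda\mapsto-\lambda$ (the $R_{EE},R_{JJ}$ cases), while writing $|\lambda|$ gives even behavior (the $R_{EJ},R_{JE}$ cases).

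The only real obstacle is careful sign bookkeeping across the four cases: the $\sigma_3$-induced signs, the minus sign created when complex conjugation is pushed through $\Im$, and the denominator sign flip all have to line up simultaneously. Crucially, however, no analytic input is needed beyond the two symmetries of Remark~\ref{rem-sym} and the column-analyticity recorded in Remark~\ref{rem-anal-columns}, so the four case computations are parallel and short.
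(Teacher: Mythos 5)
Your proof is correct, and it achieves the result, but it takes a slightly more computational route than the paper's. The paper's argument rests on a short abstract observation: if two functions $A,B$ (defined off $[-1,1]$) satisfy $A(\tilde\lambda)\equiv B(-\tilde\lambda)$, then $\Delta_\lambda A(\lambda_0)=-\Delta_\lambda B(|\lambda_0|)$ for $\lambda_0<0$ (because negating $\lambda$ interchanges the upper and lower shores). The paper then only needs the $\sigma_3$-conjugation symmetry \eqref{sig2 conj} to see that the determinant numerators in \eqref{ker RE RJ} are even in $\lambda$ for $R_{EE},R_{JJ}$ and odd for $R_{EJ},R_{JE}$ (after dividing by the odd factor $\lambda$ in the denominator, the kernels themselves are even, resp.\ odd), so the abstract lemma and \eqref{jump-kerr RE RJ} finish the job in one stroke without ever unpacking the Schwarz-conjugation symmetry again -- that symmetry is already baked into \eqref{jump-kerr RE RJ}. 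You instead chain the two symmetries of Remark~\ref{rem-sym} at the entry level to land directly on the relations $\Gamma_{11}(x;\lambda_-)=\overline{\Gamma_{11}(x;|\lambda|_-)}$, $\Gamma_{21}(x;\lambda_-)=-\overline{\Gamma_{21}(x;|\lambda|_-)}$ on $\mathring E$ (and likewise for $\Gamma_{12},\Gamma_{22}$ on $\mathring J$), then push the conjugations through the $2\times 2$ determinants and through $\Im$. This is more explicit and makes the four sign cancellations visible, at the cost of re-deriving by hand what the paper's abstract lemma encodes once. Both are valid; the paper's version is shorter, yours is more self-contained in that the reader does not need to internalize the boundary-value flip lemma.
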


\begin{proof}
For $\lambda>0$ there is nothing to prove, so we only need to consider $\lambda<0$.  As is easily seen, if $A$ and $B$ are two functions defined in a neighborhood of some $\l_0\in(-1,0)$ and $-\l_0$, respectively, and {$A(\tilde\l)\equiv B(-\tilde\l)$ for any $\tilde\l\in\mathbb C\setminus [-1,1]$ close to $\l_0$}, then $\Delta_\l A(\l)|_{\l=\l_0}=-\Delta_\l B(\l)|_{\l=|\l_0|}$. Also, \eqref{sig2 conj} implies that $\Gamma_{ij}(x;-\l)=\Gamma_{ij}(x;\l)$ if $i=j$, and $\Gamma_{ij}(x;-\l)=-\Gamma_{ij}(x;\l)$ if $i\not=j$. Thus, replacing $\l$ with $-\l$ changes the sign of the numerators in the formulas for $R_{EE}$ and $R_{JJ}$, while the numerators in the formulas for $R_{EJ}$ and $R_{JE}$ do not change sign. Combining these two facts with \eqref{jump-kerr RE RJ} we prove the Proposition.
\end{proof}


Knowledge of the jump of the resolvent unlocks the \textit{resolution of the identity} (also known as the spectral family) $\mathcal{E}(\l):L^2(U)\to L^2(U)$ for $\mathscr{K}$, which satisfies for any $\Delta=(a,b)$:
\begin{align}\label{complete res of id}
(\mathcal{E}(\Delta)f)(x):=\frac{-1}{2\pi i}\lim_{\delta\to0+}\lim_{\e\to0+}
\int_{a+\delta}^{b-\delta}\left(\frac{\mathscr{R}[f](x;t+i\e)}{t+i\e}-\frac{\mathscr{R}[f](x;t-i\e)}{t-i\e}\right)dt.
\end{align}
Here we used \cite[p. 921]{DS57} for resolvents in standard form (see Remark \ref{rem-std}). Let $\E_{ac}(\l)$ be the family of projectors, which is obtained from $\E(\l)$ by replacing the projection onto $H_p$ with projection to zero. In other words, $\E(\De)=\E_{ac}(\De)$ for any interval that contains no eigenvalues of $\mathscr{K}$, and $\E_{ac}((\l_j-\e,\l_j+\e))\to {\bf 0}$ as $\e\to 0+$ for each eigenvalue $\l_j$. Pick any interval $\De$. Our definition implies that $\E_{ac}(\De)f=\E(\De)f$ for any $f\in H_{ac}$, and $\E_{ac}(\De)f=0$ for any $f\in H_p$. We use the subscript ``$ac$'', because $\E_{ac}$ is closely connected with $\mathscr{K}_{ac}$, the absolutely continuous part of $\mathscr{K}$. Let $E(x,y;\l)$ and $E_{ac}(x,y;\l)$ denote the kernels of $\E(\l)$ and $\E_{ac}(\l)$, respectively. Throughout the paper we use the notation $E'(x,y;\l)=\partial E(x,y;\l)/\partial\l$, and similarly for $E_{ac}$.

\begin{proposition}\label{prop: smoothness of res of id}
\noindent
\begin{enumerate}
\item One has
\be\label{Epr decomp}
E'(x,y;\l)=E_{ac}'(x,y;\l)+\sum_j P_j(x,y)\delta(\l-\l_j),
\ee
where $P_j\in C^\infty\bigl((\mathring{J}\cup \mathring{E})\times (\mathring{J}\cup \mathring{E})\bigr)$ is the kernel of the projector onto the eigenspace corresponding to $\l_j$, and the sum is over all the distinct eigenvalues $\l_j$ of $\mathscr{K}$;
\item $E_{ac}'(x,y;\l)$ is analytic in $(\mathring{J}\cup \mathring{E})\times (\mathring{J}\cup \mathring{E})\times ((-1,1)\setminus \{0\})$;
\item For any closed interval $I\subset \mathring{J}\cup \mathring{E}$, $n_1,n_2\ge 0$, and $\e>0$, one has
\be\label{resol-prop bdd}
\sup_{x,y\in I,\e<|\l|<1}\left|(d/dx)^{n_1}(d/dy)^{n_2}E_{ac}'(x,y;\l)\right|<\infty.
\ee
\end{enumerate}
\end{proposition}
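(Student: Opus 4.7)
The plan is to extract an explicit formula for $E'(x,y;\l)$ from the resolvent representation \eqref{complete res of id}, isolate the delta contributions at the point spectrum, and then read off both analyticity and the uniform bound from Proposition~\ref{prop: DeltaAbsLambdaRJRE ids} together with the analyticity properties of the columns of $\Gamma$ noted in Remark~\ref{rem-anal-columns}.

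For part (1), I would first invoke Theorem~\ref{thm:main}(1) to conclude that $\mathscr{K}$ has only finitely many eigenvalues $\pm\l_j$ and each eigenspace is finite dimensional; by self-adjointness, the standard resolvent $\mathcal R(\l;\mathscr{K})=\l^{-1}({\Id}+\mathscr{R}(\l))$ is meromorphic on $\overline{\mathbb C}\setminus[-1,1]$ with a simple pole at each $\pm\l_j$ whose residue is $-P_j$, where $P_j$ is the orthogonal projector onto $\ker(\mathscr{K}-\l_j{\Id})$. In \eqref{complete res of id} I would pull the $\e\to 0^+$ limit inside the integral on $(a+\d,b-\d)\setminus\bigcup_j (\l_j-\d',\l_j+\d')$ (using boundary values of $\Gamma(\cdot;\l_\pm)$) and treat a small neighborhood of each eigenvalue by deforming the two horizontal segments into a small closed loop around $\l_j$; by the Cauchy residue theorem this loop contributes $P_j f$. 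Taking the distributional derivative in $\l$ yields \eqref{Epr decomp} with
\be
E_{ac}'(x,y;\l)=-\frac{1}{2\pi i\,\l}\Delta_\l R(x,y;\l),\qquad \l\in(-1,1)\setminus\Sigma.
\ee
Smoothness of $P_j(x,y)$ on $(\mathring J\cup\mathring E)^2$ is the kernel statement of Theorem~\ref{thm:main}(1).

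For part (2), I would plug Proposition~\ref{prop: DeltaAbsLambdaRJRE ids} into the formula for $E_{ac}'$ case by case via \eqref{jump-kerr}. On $\mathring E\times\mathring E$ the formula involves only $\G_1(\cdot;|\l|_-)$, which by Remark~\ref{rem-anal-columns} is analytic in a neighborhood of $E$; the determinant in the numerator vanishes on the diagonal $x=y$, so the factor $(x-y)^{-1}$ produces a removable singularity and the resulting function is jointly analytic in $(x,y)\in(\mathring E)^2$. The case $\mathring J\times\mathring J$ is identical using $\G_2$, while on $\mathring E\times\mathring J$ and $\mathring J\times\mathring E$ the denominator $x-y$ never vanishes and the numerators combine $\G_1$ near $E$ with $\G_2$ near $J$. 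Joint analyticity in $\l\in(-1,1)\setminus\{0\}$ follows because, after the $\pm\l_j$ poles have been absorbed into the delta terms in (1), the boundary values $\Gamma(z;\l_\pm)$ depend analytically on $\l$ on the two shores of $(-1,1)\setminus\Sigma$ and glue across each $\pm\l_j$ once the residue is removed. For part (3), $I$ is connected and contained in the open disjoint union $\mathring J\cup\mathring E$, so $I$ lies in a single component of either $\mathring J$ or $\mathring E$; on the compact set $\{(x,y,\l):x,y\in I,\ \e\le|\l|\le 1-\eta\}$, $x$- and $y$-derivatives of $E_{ac}'$ are jointly continuous and hence bounded. Uniform control as $|\l|\to 1^-$ follows from the fact that $\pm1$ are not in the spectrum, the jump matrix in RHP~\ref{RHPGamma} is regular there, so $\Gamma(z;\l_\pm)$ and all $z$-derivatives on $I$ remain bounded, and the factor $\l^{-1}$ is bounded for $|\l|>\e$.

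\textbf{Expected main obstacle.} The delicate step is the rigorous passage from \eqref{complete res of id} to the decomposition \eqref{Epr decomp}: one must justify exchanging $\lim_{\e\to 0^+}$ with the $t$-integration on the part of $\De$ away from $\Sigma$, and simultaneously account for the pole contributions at each $\l_j$ via contour deformation, while ensuring that the residual piece indeed identifies with the Lebesgue-absolutely continuous kernel $E_{ac}'$. A secondary, more technical issue is the uniform boundedness of $\Gamma(z;\l_\pm)$ and its $z$-derivatives as $\l\to\pm1$, which is not a direct consequence of RHP~\ref{RHPGamma} alone but should follow from the Deift--Zhou steepest-descent construction carried out in Section~\ref{sec: Gamma}.
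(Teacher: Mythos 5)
Your proof of part (1) is in the same spirit as the paper's (a Stone-formula argument: away from $\Sigma$ the limit gives $-\frac{1}{2\pi i\l}\Delta_\l R$, and each eigenvalue contributes a residue term), but it glosses over the key technical obstruction. You write that the standard resolvent $\mathcal R(\l;\mathscr K)$ "is meromorphic on $\overline{\mathbb C}\setminus[-1,1]$ with a simple pole at each $\pm\l_j$," but in fact the resolvent is \emph{analytic} on $\mathbb C\setminus[-1,1]$; the poles lie in the two a~priori \emph{distinct} analytic continuations $R(x,y;\l_+)$ and $R(x,y;\l_-)$ of the boundary values, with residues $R_{\l_0}^+$ and $R_{\l_0}^-$. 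For the delta decomposition \eqref{Epr decomp} to hold with a pole-free $E_{ac}'$ one must show that these two residues coincide; the paper does this by a Howland-type positivity argument: $\overline{R_{\l_0}^+}=R_{\l_0}^-$ by Schwarz symmetry, and $(\E'(\l)f,f)\ge0$ on both sides of $\l_0$ forces $\Im R_{\l_0}^{\pm}\equiv0$, hence $R_{\l_0}^+=R_{\l_0}^-$. Your contour-deformation sketch does not supply this step, and without it $E_{ac}'$ could have a genuine pole at $\l_j$ while still producing delta terms in the Stone formula, contradicting the claimed smoothness. (There is also a dependency wrinkle: you cite Theorem~\ref{thm:main}(1) for $P_j\in C^\infty$, but that theorem is proved in Section~\ref{sec: exact to approx}; the paper instead cites smoothness of eigenfunctions from the earlier reference \cite{BKT20}.)

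Your part (3) contains a concrete error and a gap. You assert "$\pm1$ are not in the spectrum" -- this is false: by Theorem~\ref{thm: spec prop}(3), $\Sp_{ac}(\mathscr K)=[-1,1]$, so $\pm1\in\Sp(\mathscr K)$; they are merely not eigenvalues. More importantly, regularity of the jump matrix in RHP~\ref{RHPGamma} at $\l=\pm1$ does \emph{not} by itself imply that $\Gamma(z;\l_\pm)$ and its $z$-derivatives stay bounded on $I$ as $\l\to\pm1$; solvability and boundedness of the limiting RHP must be established separately. The paper handles this by passing to the variable $\rho(\l)$ of \eqref{rhosurf}, using the fact (from \cite{BKT20}) that $R(x,y;\l(\rho))$ is meromorphic in $\rho$ and that $\rho=\pm\frac12$ (i.e.\ $\l=\pm1$) are not poles. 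Your fallback appeal to the Deift--Zhou analysis of Section~\ref{sec: Gamma} will not work: that steepest-descent construction controls the $\varkappa\to\infty$ (i.e.\ $\l\to0$) limit only and says nothing about $\l\to\pm1$. Part (2) of your proposal is fine and matches the paper's reasoning.
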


\begin{proof}
Let $\l_0$ be a pole of $R(x,y;\l)$ (recall that $R$ is defined in \eqref{res kernel}). It follows from \cite[Section 5.1]{BKT20} that 
$R(x,y;\l_\pm)=R_{\l_0}^\pm(x,y)/(\l-\l_0)+O(1)$ near $\l_0$. By the symmetry with respect to complex conjugation (see \cite[eq. (3.1)]{Howland1969} and  \cite[Remark 4.8]{BKT20}), $\overline{R_{\l_0}^+(x,y)}=R_{\l_0}^-(x,y)$. Next we use the same argument as in \cite[Proof of Theorem 2]{Howland1969} to show that $R_{\l_0}^\pm(x,y)$ are real-valued and equal each other. We have
\be
E'(x,y;\l)=-\frac1{2\pi i\l}\Delta_\l R(x,y;\l)=-\frac1{\pi\l}\frac{\Im R_{\l_0}^+}{\l-\l_0}+O(1),\ \l\to\l_0.
\ee
Since $(\E'(\l)f,f)\ge0$ for any $f\in L^2(U)$ on each side of $\l_0$, this implies that $\Im R_{\l_0}^\pm(x,y)\equiv 0$. The residue of $R$ at the pole gives the corresponding projector (see \cite[Proof of Theorem 5.5]{BKT20}). Applying the same argument at all poles and using that $E(x,y;\l)$ is the kernel of the spectral family of $\mathscr{K}$ proves \eqref{Epr decomp}.

The fact that $E_{ac}'$ is analytic for any $\l\in(-1,1)\setminus\{0\}$ follows from \cite[eq. (4.20), proof of Lemma~4.14]{BKT20}. The latter asserts that $R(x,y;\l)$ is a meromorphic function of $\l$, $\Delta_\l R(x,y;\l)=R(x,y;\l_+)-R(x,y;\l_-)$, and the kernels $R(x,y;\l_+)$ and $R(x,y;\l_-)$ admit analytic continuation in $\l$ into the lower and upper half-planes, respectively. The analyticity in $x$ and $y$ follows from the identities of Proposition \ref{prop: DeltaAbsLambdaRJRE ids} and Remark~\ref{rem-anal-columns}.

To prove the Proposition it remains only to establish that the derivatives of $E_{ac}'(x,y;\l)$ with respect to $x$ and $y$ remain bounded as $\l\to\pm1$. To show this fact, introduce the function 
\be
\label{rhosurf}
\rho(\lambda) =-\frac 1 2 +   \frac 1{ i\pi} \ln \le(\frac {1 - \sqrt{1-\lambda^2}}\lambda \ri), \qquad \l(\rho)=-\frac{1}{\sin(\pi\rho)},
\ee
 that provides a conformal 
map between the slit $\l$-plane $\overline{\C}\setminus [-1,1]$ and the vertical strip $\Re \rho<\hf$.  $\l(\rho)$  is the inverse function. The other strips $|\Re \rho-k|<\frac 1 2$, $k\in \Z$, in the $\rho$-plane are mapped to the same slit of the $\l$-plane by  $\l(\rho)$ and represent the various sheets of the branched map $\l(\rho)$. Then the desired result is a consequence of the following facts, established  
 in  \cite{BKT20}: 
  (1) $R(x,y;\l(\rho))$ is meromorphic in the $\rho\in\mathbb C$ plane, 
  and; (2) $\l=\pm1$ (i.e., $\rho=\pm1/2$) are not poles, see Figure 3, Theorem 4.7, and Lemma 4.14 in \cite{BKT20}. 
\end{proof}

\begin{corollary}\label{prop: deriv res of id}
The kernel $E_{ac}'(x,y;\lambda)$, where $\l\in [-1,1]\setminus\Sigma$, is given by
\begin{multline}\label{deriv res of id kernel}
    E_{ac}'(x,y;\lambda)=\frac{-1}{2\pi i \l}\left(\chi_E(x)\chi_E(y)\Delta_\l R_{EE}(x,y;\l)+\chi_E(x)\chi_J(y)\Delta_\l R_{EJ}(x,y;\l) \right. \\
    \left. +\chi_J(x)\chi_E(y)\Delta_\l R_{JE}(x,y;\l)+\chi_J(x)\chi_J(y)\Delta_{\l}R_{JJ}(x,y;\l)\right).
\end{multline}
\end{corollary}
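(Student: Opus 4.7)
The plan is to invert the resolution of the identity directly from \eqref{complete res of id}. First I would unfold that operator equation at the level of integral kernels by applying Fubini's theorem to swap the $dy$ integration hidden inside $\mathscr{R}[f]$ with the $dt$ integration. For $a,b\in[-1,1]\setminus\Sigma$ this yields a Stieltjes--Perron-type identity
\[
E(x,y;b)-E(x,y;a) \;=\; -\frac{1}{2\pi i}\int_a^b\frac{R(x,y;t_+)-R(x,y;t_-)}{t}\,dt \;=\; -\frac{1}{2\pi i}\int_a^b\frac{\Delta_t R(x,y;t)}{t}\,dt,
\]
where the $\varepsilon\to 0+$ limits inside \eqref{complete res of id} collapse because the boundary values $R(x,y;\lambda_\pm)$ exist by item (3) following Remark~\ref{rem-std}.

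Next I would differentiate this identity in $b$ at any point $\lambda\in[-1,1]\setminus\Sigma$. The integrand is continuous in $t$ on every closed subinterval avoiding $\Sigma$ thanks to the analyticity of $\Gamma(z;\lambda_\pm)$ in $\lambda$ away from the eigenvalues (established in the proof of Proposition~\ref{prop: smoothness of res of id}), so the fundamental theorem of calculus gives
\[
E'(x,y;\lambda) \;=\; -\frac{1}{2\pi i \lambda}\,\Delta_\lambda R(x,y;\lambda), \qquad \lambda\in[-1,1]\setminus\Sigma.
\]
I would then invoke Proposition~\ref{prop: smoothness of res of id}(1), which decomposes $E'$ as $E'_{ac}$ plus a weighted sum of Dirac masses supported on $\Sigma$. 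Restricting to $\lambda\in[-1,1]\setminus\Sigma$, these Dirac contributions vanish and the left-hand side coincides with $E'_{ac}(x,y;\lambda)$.

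The last step is purely algebraic: substitute the block decomposition \eqref{jump-kerr} of $\Delta_\lambda R(x,y;\lambda)$ into the four jump kernels $\Delta_\lambda R_{EE},\Delta_\lambda R_{EJ},\Delta_\lambda R_{JE},\Delta_\lambda R_{JJ}$, and read off \eqref{deriv res of id kernel}. I do not expect this corollary to present a genuine obstacle: all the serious analytic content---smoothness and boundedness of $E'_{ac}$, absence of singular continuous spectrum, meromorphy of $R$ in the $\rho$-uniformization, and the explicit block structure of the jump---has already been established in Proposition~\ref{prop: smoothness of res of id} and the preceding results of Section~\ref{sec: K + RHP}. The only mild technical point requiring attention is the justification of Fubini and of the interchange of the double limit $\varepsilon,\delta\to 0+$ with the $y$-integration, which is standard once one knows that $R(x,y;t_\pm)$ is locally bounded in $t$ away from $\Sigma$ as a kernel in $L^2_{\mathrm{loc}}$ in $(x,y)$.
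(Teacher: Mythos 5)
Your proposal is correct and follows the same route the paper takes: derive $E'(x,y;\lambda)=-\frac{1}{2\pi i\lambda}\Delta_\lambda R(x,y;\lambda)$ from \eqref{complete res of id}, invoke Proposition~\ref{prop: smoothness of res of id}(1) to drop the Dirac contributions on $[-1,1]\setminus\Sigma$, and then substitute the block decomposition \eqref{jump-kerr}. The paper's proof is simply a terse statement of this, with the Stieltjes--Perron differentiation and Fubini bookkeeping left implicit since the key relation $E'=-\frac{1}{2\pi i\lambda}\Delta_\lambda R$ was already used in the proof of Proposition~\ref{prop: smoothness of res of id}.
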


\begin{proof} By Proposition~\ref{prop: smoothness of res of id}, all we need to do to find $E_{ac}'(x,y;\l)$, $\l\in[-1,1]\setminus\Sigma$, is to compute the jump $\Delta_\l R(x,y;\l)$, and \eqref{jump-kerr}
completes the proof.
\end{proof}

Let $\mathscr{R}_E(\lambda^2)$, $\mathscr{R}_{J}(\lambda^2)$ denote the resolvents of $AA^\dagger$, $A^\dagger A$, in the sense of \eqref{res of K}, respectively.

\begin{proposition}\label{prop:Ksq}
We have the relations
\begin{align}
    \Id+\mathscr{R}_{E}(\lambda^2)&=\left(\Id-\frac{1}{\lambda^2}AA^\dagger\right)^{-1}=\Id+\pi_E\mathscr{R}(\lambda)\pi_E, \label{Res_E} \\
    \Id+\mathscr{R}_{J}(\lambda^2)&=\left(\Id-\frac{1}{\lambda^2}A^{\dagger}A\right)^{-1}=\Id+\pi_J\mathscr{R}(\lambda)\pi_J, \label{Res_J}
\end{align}
where $\mathscr{R}$ is given by \eqref{res of K}, and $\pi_E:L^2(U)\to L^2(E)$, $\pi_J:L^2(U)\to L^2(J)$ are the natural projections. Moreover, the kernels of $\mathscr{R}_E$, $\mathscr{R}_J$, are given by $R_{EE}$, $R_{JJ}$, respectively, see \eqref{ker RE RJ}.
\end{proposition}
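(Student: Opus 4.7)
The plan is to argue directly from the definition of the resolvent, without invoking the Schur complement formalism. Fix $\lambda\in\mathbb{C}\setminus[-1,1]$; then $\lambda^2\in\mathbb{C}\setminus[0,1]$, and since the spectra of $AA^\dagger$ and $A^\dagger A$ are contained in $[0,1]$ (because $\|\mathscr{K}\|\le 1$ by Theorem~\ref{thm: spec prop}(1)), every resolvent in sight is a well-defined bounded operator. Given $g\in L^2(E)$, let $g_0=\pi_E^\dagger g\in L^2(U)$, and set $h=(\Id+\mathscr{R}(\lambda))g_0$. Decompose $h=\pi_E^\dagger h_E+\pi_J^\dagger h_J$ with $h_E\in L^2(E)$ and $h_J\in L^2(J)$.

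The resolvent identity $(\Id-\tfrac{1}{\lambda}\mathscr{K})h=g_0$, combined with the observation that $\mathscr{K}$ sends $L^2(J)$ into $L^2(E)$ via $A$ and $L^2(E)$ into $L^2(J)$ via $A^\dagger$ (which is immediate from \eqref{kernel K}, since for $y\in \mathring{E}$ the kernel $K(x,y)$ is supported in $x\in J$, and conversely), decouples into the $2\times 2$ block system
\begin{align*}
h_E - \tfrac{1}{\lambda}A h_J &= g \quad \text{on } E,\\
h_J - \tfrac{1}{\lambda}A^\dagger h_E &= 0 \quad \text{on } J.
\end{align*}
Eliminating $h_J$ via the second equation and substituting into the first yields $(\Id-\tfrac{1}{\lambda^2}AA^\dagger)h_E = g$, so $h_E=(\Id+\mathscr{R}_E(\lambda^2))g$. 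Since $h_E=\pi_E(\Id+\mathscr{R}(\lambda))\pi_E^\dagger g$ by construction, this establishes both equalities in \eqref{Res_E}. The proof of \eqref{Res_J} is entirely symmetric, interchanging the roles of $E$ and $J$ and of $A$ and $A^\dagger$.

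For the kernel assertion, note that $\pi_E\mathscr{R}(\lambda)\pi_E^\dagger\colon L^2(E)\to L^2(E)$ is the integral operator whose kernel is the restriction of $R(x,y;\lambda)$ to $E\times E$; by the diagonal decomposition \eqref{full-resol} this restriction equals $R_{EE}(x,y;\lambda)$. Combined with the first part of the proposition, this identifies $R_{EE}$ as the kernel of $\mathscr{R}_E(\lambda^2)$. The case of $R_{JJ}$ and $\mathscr{R}_J(\lambda^2)$ is verbatim.

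I expect no genuine analytic obstacle — the proof is purely algebraic, relying only on the block action of $\mathscr{K}$ already encoded in \eqref{K-def} and \eqref{kernel K}. The only subtlety is notational: the statement writes $\pi_E\mathscr{R}(\lambda)\pi_E$, but dimensional consistency forces the right-hand $\pi_E$ to be read as the extension-by-zero $\pi_E^\dagger$, consistent with the convention introduced just before Corollary~\ref{cor:main}; I adopt this reading throughout.
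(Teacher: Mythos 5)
Your proof is correct, and it takes a genuinely different route from the paper's. The paper expands $\mathscr{R}(\lambda)=\sum_{j\ge 1}(\mathscr{K}/\lambda)^j$ and observes that odd powers of $\mathscr{K}$ are block off-diagonal while even powers are block diagonal, so that compressing to the $E$- or $J$-block leaves exactly the Neumann series for $(\Id-\lambda^{-2}AA^\dagger)^{-1}$ or $(\Id-\lambda^{-2}A^\dagger A)^{-1}$. That argument is quick, but taken literally it requires $\Vert\mathscr{K}/\lambda\Vert<1$, i.e.\ $|\lambda|>1$, with the extension to all of $\mathbb{C}\setminus[-1,1]$ then obtained implicitly by analytic continuation of both sides. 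Your approach instead applies the resolvent equation $(\Id-\tfrac1\lambda\mathscr{K})h=g_0$ to $g_0$ supported on $E$, uses the block action of $\mathscr{K}$ to split into two coupled equations on $E$ and on $J$, and eliminates $h_J$ — this is precisely the Schur-complement computation you half-disavow in your opening line, but carried out concretely without naming it. The payoff is that your derivation holds directly for every $\lambda\in\mathbb{C}\setminus[-1,1]$ with no convergence or continuation step; the trade-off is that the paper's series argument makes the parity observation (odd vs.\ even powers of $\mathscr{K}$) more visible as the governing mechanism. You are also right that the $\pi_E$ on the right of $\pi_E\mathscr{R}(\lambda)\pi_E$ must be read as the extension-by-zero $\pi_E^\dagger$; the paper is silent on this, but your reading is the only consistent one and agrees with the convention stated just before Corollary~\ref{cor:main}.
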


\begin{proof}
From \eqref{res of K} we have
\begin{align}\label{res sum}
    \mathscr{R}(\lambda)=\sum_{j=1}^\infty\left(\frac{\mathscr{K}}{\lambda}\right)^j.
\end{align}
Using the matrix form of $\mathscr{K}$ \eqref{K-def}, it is clear that the odd powers in \eqref{res sum} are block off-diagonal and the even powers in \eqref{res sum} are block diagonal.  The results \eqref{Res_E}, \eqref{Res_J} follow by comparing the expansions of $(\Id-\lambda^{-2}AA^{\dagger})^{-1}$, $\pi_E\mathscr{R}(\lambda)\pi_E$ and $(\Id-\lambda^{-2}A^{\dagger}A)^{-1}$, $\pi_J\mathscr{R}(\lambda)\pi_J$.
\end{proof}

Combining Theorem~\ref{thm:main} and Proposition~\ref{prop:Ksq} proves Corollary~\ref{cor:main}.

\section{Small $\lambda$ asymptotics of $\Gamma(z;\lambda)$}\label{sec: Gamma}

\begin{figure}
\begin{center}
\begin{tikzpicture}[scale=1.25]

\draw[black,thick] (-5,0) -- (-2,0);
\draw[black,thick] (-1,0) -- (1,0);
\draw[black,thick] (3,0) -- (5,0);

\filldraw[black] (-5,0) circle [radius=1.5pt] node[anchor=north] {$a_1$};
\filldraw[black] (-4,0) circle [radius=1.5pt] node[anchor=north] {$b_1$};
\filldraw[black] (-3,0) circle [radius=1.5pt] node[anchor=north] {$b_2$};
\filldraw[black] (-2,0) circle [radius=1.5pt] node[anchor=north] {$a_2$};
\filldraw[black] (-1,0) circle [radius=1.5pt] node[anchor=north] {$a_3$};
\filldraw[black] (1,0) circle [radius=1.5pt] node[anchor=north] {$a_4$};
\filldraw[black] (3,0) circle [radius=1.5pt] node[anchor=north] {$a_5$};
\filldraw[black] (4,0) circle [radius=1.5pt] node[anchor=north] {$b_3$};
\filldraw[black] (5,0) circle [radius=1.5pt] node[anchor=north] {$a_6$};

\node[above] at (-4.5,0) {$E$};
\node[above] at (-3.5,0) {$J$};
\node[above] at (-2.5,0) {$E$};
\node[above] at (0,0) {$J$};
\node[above] at (3.5,0) {$J$};
\node[above] at (4.5,0) {$E$};

\end{tikzpicture}
\end{center}
\caption{An example of the sets $J$ and $E$ with $n=3$ double points and $2g+2=6$ endpoints.}\label{fig: J and E}
\end{figure}
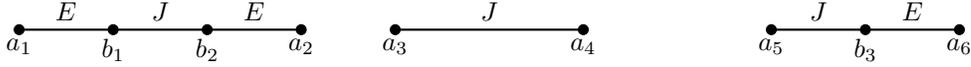

We follow the steepest descent method of Deift-Zhou, which was first introduced in \cite{DZ93}.  First, we remind the reader of the notation.  The set $U=E\cup J$ consists of $g+1$ disjoint intervals.  Endpoints of $J$ or $E$ which:
\begin{itemize}
    \item belong to both $J,E$ are called \textit{double} endpoints or just double points,
    \item belong to only $J$ or only $E$ are called \textit{simple} endpoints or just endpoints.
\end{itemize} 
We let $a_1,a_2,\ldots,a_{2g+2}\in\mathbb{R}$, $g\in\{0\}\cup\mathbb{N}$, satisfying $a_1<a_2<\cdots<a_{2g+2}$ denote the simple endpoints of $J$ and $E$; they are the endpoints of $g+1$ disjoint intervals. We let $b_1,b_2,\ldots,b_n\in\mathbb{R}$, $n\in\mathbb{N}$, satisfying $b_1<b_2<\cdots<b_n$, denote the double endpoints of $J$ and $E$; they lie in the interior of the above mentioned $g+1$ disjoint intervals.  See Figure \ref{fig: J and E} for an example.

\subsection{Transformations $\Gamma\to Y\to Z$}

To begin, we seek the $\gg$-function $\gg(z)$ which has the following properties:
\begin{itemize}
\item The function $\gg(z)$ satisfies the jump conditions
\begin{align}
    \gg(z_+)+\gg(z_-)&=-1, & & z\in \mathring{J}, \label{gJJump} \\
    \gg(z_+)+\gg(z_-)&=1, & & z\in \mathring{E}, \label{gEJump} \\
    \gg(z_+)-\gg(z_-)&=i\Om_j, & & z\in(a_{2j},a_{2j+1}), ~ j=1,\dots,g, \label{gGapJump}
\end{align}
where the constants $\Omega_j$ are to be determined.
\item $\gg(z)$ is analytic on $\overline{\mathbb{C}}\setminus U$,
\item $\gg(z)\in L^2_{loc}(U)$.
\end{itemize}
We introduce the characteristic function $\chi$ and radical $R:\mathbb{C}\setminus(J\cup E)\to\mathbb{C}$
\be
\chi(z)= \le\{
\begin{array}{cc}
1, & \text{if  $z\in \mathring{E}$} \\
-1, &\text{if  $z\in \mathring{J}$} 
\end{array}
\ri. , 
\qquad
R(\z)=\prod_{j=1}^{2g+2}(\z-a_j)^{\hf}, \label{radical}
\ee
with the branch of $R$ satisfying $R(z)\sim z^{g+1}$ as $z\ra \infty$.
\begin{proposition}\label{prop: g-function}
The $\gg$-function is given by
\be\label{gg}
\gg(z)=\frac{R(z)}{2\pi i}\le(\int_{U}\frac{\chi(\z) d\z}{(\z-z)R_+(\z)}+
\sum_{j=1}^{g}\int_{a_{2j}}^{a_{2j+1}}\frac{
{i\Om_j}
d\z}{(\z-z)R(\z)}\ri),
\ee
where the constants $\Om_j\in\mathbb{R}$ are (uniquely) chosen so that $\gg(z)$ is analytic at $z=\infty$.  Moreover,
\begin{enumerate}
    \item $\gg(z)$ is Schwarz symmetric (i.e. $\gg(z)=\overline{\gg(\overline{z})}$),
    \item $|\Re[\gg(z)]|<\frac{1}{2}$ for $z\in\overline{\mathbb{C}}\setminus U$,
    \item $\Re[\gg(z_\pm)]=\frac{1}{2}$ and $\Im[\gg'(z_{+})]>0$ for $z\in \mathring{E}$,
    \item $\Re[\gg(z_\pm)]=-\frac{1}{2}$ and $\Im[\gg'(z_{+})]<0$ for $z\in \mathring{J}$.
\end{enumerate}
\end{proposition}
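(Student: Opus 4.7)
The plan is to read every property off the explicit representation \eqref{gg}. Write $\gg(z)=R(z)h(z)$ with $h$ the bracketed Cauchy sum, and recall that $R_+=-R_-$ on each cut forming $U$ while $R$ is analytic on the gaps $(a_{2j},a_{2j+1})$. First I would verify \eqref{gJJump}--\eqref{gGapJump} via Plemelj--Sokhotski: on $\mathring J\cup\mathring E$ only the first integral in \eqref{gg} contributes to $h_+-h_-=\chi(z)/R_+(z)$, so $\gg_++\gg_- = R_+(h_+-h_-) = \chi(z)$, producing both \eqref{gJJump} and \eqref{gEJump} at once; on a gap only the second integral contributes and $R$ is analytic, so $\gg_+-\gg_- = i\Om_j$.

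Next I would determine the $\Om_j$ from the analyticity of $\gg$ at $\infty$. Expanding $(\zeta-z)^{-1}$ as a geometric series and using $R(z)\sim z^{g+1}$, the requirement that $\gg$ carry no positive powers of $z$ in its Laurent expansion reads $M_k=0$ for $k=0,\ldots,g-1$, where
\begin{equation*}
M_k=\int_U\frac{\zeta^k\chi(\zeta)}{R_+(\zeta)}\,d\zeta
+i\sum_{j=1}^g\Om_j\int_{a_{2j}}^{a_{2j+1}}\frac{\zeta^k}{R(\zeta)}\,d\zeta.
\end{equation*}
This is a linear system of $g$ equations in the $g$ unknowns $\Om_j$; the coefficient matrix is the matrix of $a$-periods of the holomorphic differentials $\zeta^k d\zeta/R$ on the hyperelliptic curve $y^2=\prod_j(z-a_j)$, and is classically nonsingular. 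Because $R_+$ is purely imaginary on $U$ while $R$ is real on each gap, dividing through by $i$ produces a real system and forces $\Om_j\in\R$.

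Claim (1) follows by complex-conjugation in \eqref{gg}: $\overline{R(\bar z)}=R(z)$, and both $\chi(\zeta)/R_+(\zeta)$ on $U$ and $i\Om_j/R(\zeta)$ on each gap are purely imaginary (the latter using $\Om_j\in\R$), which yields $\overline{\gg(\bar z)}=\gg(z)$. Claim~(2) is then a maximum-principle argument for the harmonic function $u:=\Re\gg$ on the connected domain $\overline{\C}\setminus U$ (including $\infty$, where $\gg$ is analytic): by \eqref{gJJump}, \eqref{gEJump} combined with (1) we have $u=\tfrac12$ on $\mathring E$ and $u=-\tfrac12$ on $\mathring J$, and since both sets are nonempty the strong maximum principle forbids $|u|=\tfrac12$ at any interior point.

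The main technical hurdle is (3)--(4). Differentiating the jump relations tangentially gives $\gg'_++\gg'_-=0$ on $\mathring J\cup\mathring E$ and continuity of $\gg'$ across the gaps, while the logarithmic singularity of $\gg$ near each double point $b_k$ (inherited through $h$ from the jump of $\chi$ at $b_k$) forces a simple pole of $\gg'$ at $b_k$. Combining these with $\gg'(z)=O(z^{-2})$ at infinity yields
\begin{equation*}
\gg'(z)=\frac{P(z)}{R(z)\prod_{k=1}^n(z-b_k)}
\end{equation*}
for some polynomial $P$ of degree $\le g+n-1$ whose coefficients are fixed by the $n$ residues at the $b_k$ and the $g$ period conditions $M_k=0$. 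On $\mathring E$, $R_+(x)$ is purely imaginary, so $\Im\gg'_+(x)$ is an explicit rational function of $x$, and (3)--(4) reduce to a sign assertion for that rational function. I expect the cleanest route is to identify $\chi(x)\Im\gg'_+(x)/\pi$ with the density of the signed equilibrium measure attached to the vector potential problem for the pair $(J,E)$, whose positivity is classical in logarithmic potential theory; extracting this identification, or equivalently verifying the sign of $P$ directly from the residue and period normalizations, is where I anticipate the bulk of the technical work to lie.
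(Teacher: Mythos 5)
Your verification of the jump relations via Plemelj, the determination of the $\Om_j$ from the period conditions, the Schwarz-symmetry argument for (1), and the maximum-principle argument for (2) all agree with the paper's proof. The same holds for the identity $\Re[\gg(z_\pm)]=\pm\tfrac12$ on $\mathring E$ and $\mathring J$, which you (like the paper) deduce from Schwarz symmetry and \eqref{gJJump}--\eqref{gEJump}.

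The genuine gap is in the sign assertions $\Im[\gg'(z_+)]>0$ on $\mathring E$ and $\Im[\gg'(z_+)]<0$ on $\mathring J$. Your reduction $\gg'(z)=P(z)/\bigl(R(z)\prod_{k=1}^n(z-b_k)\bigr)$ is correct, but you stop there: you defer the sign determination to a potential-theory identification (or a direct analysis of $P$) that you neither state precisely nor prove, and you say yourself that this is ``where the bulk of the technical work'' would lie. As written, (3) and (4) are therefore not established. The paper closes this step with no computation of $\gg'$ at all. Having shown that $\Re\gg$ is harmonic on $\overline{\mathbb{C}}\setminus U$, equals $\tfrac12$ on $\mathring E$ (its global maximum by claim (2)), and equals $-\tfrac12$ on $\mathring J$ (its global minimum), one applies the Hopf boundary lemma on the upper-half-plane side of a point $x\in\mathring E$: because the harmonic function attains a boundary maximum there and is non-constant, the outward normal derivative is strictly positive, i.e.\ $\partial_y\Re\gg(x+i0^+)<0$. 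The Cauchy--Riemann equation $\partial_y\Re\gg=-\partial_x\Im\gg$ then gives $\partial_x\Im\gg(x+i0^+)>0$, which is exactly $\Im[\gg'(x_+)]>0$; the argument on $\mathring J$ is identical at the minimum. You should either adopt this two-line argument or actually carry out the sign analysis of $P$ (a substantially harder, and unnecessary, route).
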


\begin{proof}
The formula \eqref{gg} follows from the Sokhotski-Plemelj formula.  The Schwarz symmetry can now easily be verified via \eqref{gg} because $R(z)$ is Schwarz symmetric.  Using the Schwarz symmetry, we have
\begin{align*}
    \gg(z_+)=\overline{\gg(\overline{z_+})}=\overline{\gg(z_-)}, \qquad z\in\mathring{J}\cup\mathring{E}
\end{align*}
which immediately implies that
\begin{align*}
    \Re[\gg(z_+)]=\Re[\gg(z_-)], \qquad \Im[\gg(z_+)]=-\Im[\gg(z_-)], \qquad z\in\mathring{J}\cup\mathring{E}.
\end{align*}
It now follows from \eqref{gJJump}, \eqref{gEJump} that $\Re[\gg(z_\pm)]=-\frac{1}{2}$ for $z\in \mathring{J}$ and $\Re[\gg(z_\pm)]=\frac{1}{2}$ for $z\in \mathring{E}$.  Since $\Re[\gg(z)]$ is harmonic on $\overline{\mathbb{C}}\setminus(J\cup E)$, its maximum and minimum value can only be attained on $J\cup E$.  Lastly, consider $z=x+iy\in\mathring{E}$: since $\Re[\gg(z_\pm)]=\frac{1}{2}$ is maximized, we have $\frac{\partial}{\partial y}\Re[\gg(z_\pm)]<0$.  Thus, according to the Cauchy-Riemann equations, $0<\frac{\partial}{\partial x}\Im[\gg(z_+)]=\Im[\gg'(z_{+})]$, as desired.  The same idea can be applied when $z\in\mathring{J}$ to obtain $\Im[\gg'(z_{+})]<0$.
\end{proof}

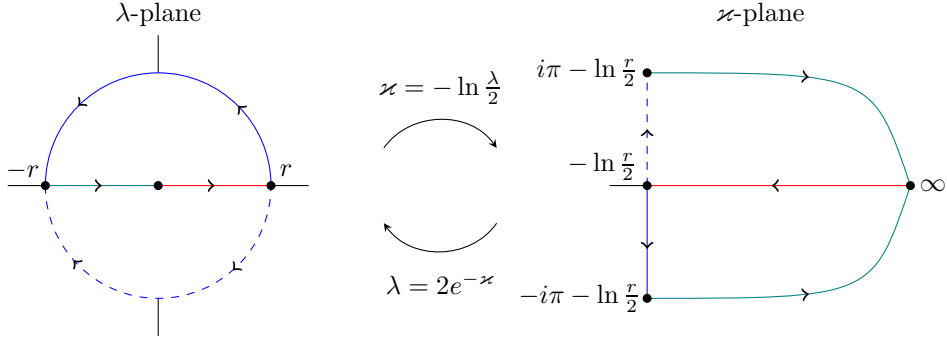
\begin{figure}
\begin{center}
\begin{tikzpicture}[scale=1.00]

\draw (-6,0) -- (-5.5,0);
\draw (-2.5,0) -- (-2,0);
\draw (-4,-2) -- (-4,-1.5);
\draw (-4,1.5) -- (-4,2);
\node[above] at (-4,2) {$\lambda$-plane};

\draw[blue,postaction = {decorate, decoration = {markings, mark = at position .25 with {\arrow[black,thick]{>};}}},postaction = {decorate, decoration = {markings, mark = at position .75 with {\arrow[black,thick]{>};}}}] (-2.5,0) arc (0:180:1.5cm);

\draw[blue,dashed,postaction = {decorate, decoration = {markings, mark = at position .25 with {\arrow[black,thick]{<};}}},postaction = {decorate, decoration = {markings, mark = at position .75 with {\arrow[black,thick]{<};}}}] (-5.5,0) arc (180:360:1.5cm);

\draw[red,postaction = {decorate, decoration = {markings, mark = at position .5 with {\arrow[black,thick]{>};}}}] (-4,0) -- (-2.5,0);

\draw[teal,postaction = {decorate, decoration = {markings, mark = at position .5 with {\arrow[black,thick]{>};}}}] (-5.5,0) -- (-4,0);

\filldraw[black] (-2.5,0) circle [radius=1.5pt];
\node at (-2.3,0.2) {$r$};

\filldraw[black] (-5.5,0) circle [radius=1.5pt];
\node at (-5.8,0.2) {$-r$};

\filldraw[black] (-4,0) circle [radius=1.5pt];

\draw (2,0) -- (2.5,0);
\draw[red,postaction = {decorate, decoration = {markings, mark = at position .5 with {\arrow[black,thick]{<};}}}] (2.5,0) -- (6,0);
\draw[blue,postaction = {decorate, decoration = {markings, mark = at position .5 with {\arrow[black,thick]{<};}}}] (2.5,-1.5) -- (2.5,0);
\draw[blue,dashed,postaction = {decorate, decoration = {markings, mark = at position .5 with {\arrow[black,thick]{>};}}}] (2.5,0) -- (2.5,1.5);
\node[above] at (4,2) {$\varkappa$-plane};

\draw[teal,postaction = {decorate, decoration = {markings, mark = at position .5 with {\arrow[black,thick]{>};}}}] (2.5,1.5) .. controls (5.5,1.5) .. (6,0);
\draw[teal,postaction = {decorate, decoration = {markings, mark = at position .5 with {\arrow[black,thick]{>};}}}] (2.5,-1.5) .. controls (5.5,-1.5) .. (6,0);

\filldraw[black] (2.5,0) circle [radius=1.5pt];
\node at (1.9,0.25) {$-\ln\frac{r}{2}$};

\filldraw[black] (2.5,1.5) circle [radius=1.5pt];
\node at (1.7,1.5) {$i\pi-\ln\frac{r}{2}$};

\filldraw[black] (2.5,-1.5) circle [radius=1.5pt];
\node at (1.6,-1.5) {$-i\pi-\ln\frac{r}{2}$};

\filldraw[black] (6,0) circle [radius=1.5pt];
\node at (6.3,0) {$\infty$};


\draw[-stealth] (-1,0.5) .. controls (-0.65,1) and (0.15,1) .. (0.5,0.5);
\node at (-0.25,1.3) {$\varkappa=-\ln\frac{\lambda}{2}$};

\draw[stealth-] (-1,-0.5) .. controls (-0.65,-1) and (0.15,-1) .. (0.5,-0.5);
\node at (-0.25,-1.3) {$\lambda=2e^{-\varkappa}$};

\end{tikzpicture}
\end{center}
\caption{The image of the ball $\{\lambda\in\mathbb{C}:|\lambda|\leq r\}$, $r>0$, under the map $\varkappa(\lambda)=-\ln\frac{\lambda}{2}$, where we understand that the green `curves' in the $\varkappa$-plane represent $|\Im\varkappa|=\pi$.}\label{fig: kappa}
\end{figure}

We introduce the important change of spectral variable given by
\begin{align}\label{def: kappa}
    \ka=\varkappa(\lambda):=-\ln\frac{\l}{2},
\end{align}
where the standard branch of the logarithm is taken (i.e. $|\Im\varkappa|\leq\pi$).  See Figure \ref{fig: kappa} for the image of a ball under this map.  Using this new spectral variable and the $\gg$-function, we use the transformation 
\begin{equation}\label{y-def}
Y(z;\varkappa)=e^{-\varkappa \gg(\infty)\sigma_3}\Gamma(z;2e^{-\ka}) e^{\varkappa \gg(z)\sigma_3},
\end{equation}
which reduces RHP \ref{RHPGamma} to the following RHP.

\begin{problem}\label{RHPY}
Find a $2\times 2$ matrix-function $Y(z;\ka)$ which, for any {fixed} $\lambda\in\mathbb{C}\setminus[-1,1]$, satisfies:
\begin{enumerate}
\item $Y(z;\ka)$ is analytic on $\overline{\C}\setminus[a_1,a_{2g+2}]$,
\item $Y(z;\ka)$ satisfies the jump conditions
\bea\label{jumpY}
\begin{split}
Y(z_+;\varkappa)& =Y(z_-;\varkappa)
\begin{bmatrix}
 e^{\ka (\gg(z_+) -\gg(z_-)) } & 0 \\ 
{i} e^{\ka( \gg(z_+) +\gg(z_-) +1) } & e^{-\ka( \gg(z_+) -\gg(z_-)) } 
\end{bmatrix}, & & z\in \mathring{J},
 \\
Y(z_+;\varkappa)&=Y(z_-;\varkappa)\begin{bmatrix} e^{\ka (\gg(z_+) -\gg _-(z)) } & 
 -i e^{-\ka(\mathcal \gg(z_+) + \gg(z_-) -1)  } \\ 0 & e^{-\ka(\gg(z_+) -\gg(z_-)) }\end{bmatrix}, & & z\in \mathring{E}, 
 \\
 Y(z_+;\varkappa)&=Y(z_-;\varkappa)e^{i\ka\Om_j \sigma_3}, & & z\in(a_{2j},a_{2j+1}),~ j =1,\dots, g,
\end{split}
\eea
\item  non-tangential boundary values of $Y(z;\ka)$  from the upper/lower half-planes  belong to $L^2_{loc}(U)$,
\item $Y(z;\varkappa)=\1+\BigO{z^{-1}}$ as $z\ra\infty$.
\end{enumerate}
\end{problem}
Notice that the jumps of $Y(z;\varkappa)$ on $\mathring{J}, \mathring{E}$ can be factorized as
\begin{align}
\begin{split}
    \begin{bmatrix}
 e^{\ka (\gg(z_+) -\gg(z_-)) } & 0 \\ 
{i} e^{\ka( \gg(z_+) +\gg(z_-) +1) } & e^{-\ka( \gg(z_+) -\gg(z_-)) } 
\end{bmatrix}&=\begin{bmatrix} 1 & -ie^{-\varkappa(2\gg(z_-)+1)} \\ 0 & 1 \end{bmatrix}(i\sigma_1)\begin{bmatrix} 1 & -ie^{-\varkappa(2\gg(z_+)+1)} \\ 0 & 1 \end{bmatrix}, \\
    \begin{bmatrix} e^{\ka (\gg(z_+) -\gg _-(z)) } & 
 - i e^{-\ka(\mathcal \gg(z_+) + \gg(z_-) -1)  } \\ 0 & e^{-\ka(\gg(z_+) -\gg(z_-)) }\end{bmatrix}&=\begin{bmatrix} 1 & 0 \\ ie^{\varkappa(2\gg(z_-)-1)} & 1 \end{bmatrix}(-i\sigma_1)\begin{bmatrix} 1 & 0 \\ ie^{\varkappa(2\gg(z_+)-1)} & 1 \end{bmatrix},
\end{split}
\end{align}
respectively, see \eqref{gJJump}, \eqref{gEJump}.  We now follow the standard procedure of opening lenses around each subinterval of $J,E$.  We call these sets ${\mathcal L_{E,J}^{(\pm)}}$, which are the regions inside the lenses around intervals $E,J$ and in the upper or lower half planes, respectively, see Figure \ref{fig: lenses}.  Now introduce the new unknown matrix
\bea
Z(z;\ka) =
\le\{\begin{array}{ll}
 Y(z;\ka), & \text{ outside the lenses,}  \\
\ds  Y(z;\ka) \le[\begin{matrix}
1 & 0\\
\mp {i}{ {\rm e}^{ \ka (2\gg(z) -1)}} & 1
\end{matrix}\ri], & z\in {\mathcal L_E^{(\pm)}},\\
\ds  Y(z;\ka) \le[\begin{matrix}
1 & {\pm i}  { {\rm e}^{ -\ka (2\gg(z) +1)}} \\
0& 1
\end{matrix}\ri], & z\in  \mathcal L_J^{(\pm)}.
 \end{array}
 \ri.
 \label{mat-Z}
\eea
We verify that $Z(z;\varkappa)$ is the solution to the following RHP:

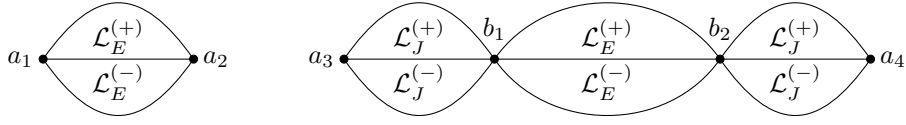
\begin{figure}
\begin{center}
\begin{tikzpicture}[scale=1.00]

\filldraw[black] (-4,0) circle [radius=1.5pt];
\filldraw[black] (-2,0) circle [radius=1.5pt];
\filldraw[black] (0,0) circle [radius=1.5pt];
\filldraw[black] (2,0) circle [radius=1.5pt];
\filldraw[black] (5,0) circle [radius=1.5pt];
\filldraw[black] (7,0) circle [radius=1.5pt];

\draw (-4,0) -- (-2,0);
\draw (0,0) -- (7,0);

\node at (-4.3,0) {$a_1$};
\node at (-1.7,0) {$a_2$};
\node at (-0.3,0) {$a_3$};
\node at (2,0.4) {$b_1$};
\node at (5,0.4) {$b_2$};
\node at (7.3,0) {$a_4$};

\node at (-3,0.3) {$\mathcal{L}_E^{(+)}$};
\node at (-3,-0.3) {$\mathcal{L}_E^{(-)}$};
\node at (1,0.3) {$\mathcal{L}_J^{(+)}$};
\node at (1,-0.3) {$\mathcal{L}_J^{(-)}$};
\node at (3.5,0.3) {$\mathcal{L}_E^{(+)}$};
\node at (3.5,-0.3) {$\mathcal{L}_E^{(-)}$};
\node at (6,0.3) {$\mathcal{L}_J^{(+)}$};
\node at (6,-0.3) {$\mathcal{L}_J^{(-)}$};

\draw[black] (-4,0) .. controls (-3.3,1) and (-2.7,1) .. (-2,0);
\draw[black] (-4,0) .. controls (-3.3,-1) and (-2.7,-1) .. (-2,0);
\draw[black] (0,0) .. controls (0.7,1) and (1.3,1) .. (2,0);
\draw[black] (0,0) .. controls (0.7,-1) and (1.3,-1) .. (2,0);
\draw[black] (2,0) .. controls (2.7,1) and (4.3,1) .. (5,0);
\draw[black] (2,0) .. controls (2.7,-1) and (4.3,-1) .. (5,0);
\draw[black] (5,0) .. controls (5.7,1) and (6.3,1) .. (7,0);
\draw[black] (5,0) .. controls (5.7,-1) and (6.3,-1) .. (7,0);

\end{tikzpicture}
\end{center}
\vspace{0pt}
\caption{The lenses $\mathcal{L}_{J,E}^{(\pm)}$ with $g=1$ and $n=2$ double points.} \label{fig: lenses}
\vspace{-0.0cm}
\end{figure}

\begin{problem}\label{RHPZ}
Find a $2\times2$ matrix-function $Z(z;\varkappa)$ which, for any {fixed} $\lambda\in\mathbb{C}\setminus[-1,1]$, satisfies:
\begin{enumerate}
\item $Z(z;\varkappa)$ is analytic in $\overline{\mathbb{C}}\setminus\left([a_1,a_{2g+2}]\cup\partial\mathcal{L}_J^{(\pm)}\cup\partial\mathcal{L}_E^{(\pm)}\right)$,
\item $Z(z;\varkappa)$ satisfies the jump conditions
\bea\label{Z jump}
\begin{split}
    Z(z_+;\varkappa)&=Z(z_-;\varkappa)(i\sigma_1), & & z\in \mathring{J}, \\
    Z(z_+;\varkappa)&=Z(z_-;\varkappa)(-i\sigma_1), & & z\in \mathring{E}, \\
    Z(z_+;\varkappa)&=Z(z_-;\varkappa)e^{i\varkappa\Omega_j\sigma_3}, & & z\in(a_{2j},a_{2j+1}), ~ j=1,\ldots,g, \\
    Z(z_+;\varkappa)&=Z(z_-;\varkappa)\begin{bmatrix} 1 & 0 \\ ie^{\varkappa(2\gg(z)-1)} & 1 \end{bmatrix}, & & z\in \partial\mathcal{L}_E^{(\pm)}, \\
    Z(z_+;\varkappa)&=Z(z_-;\varkappa)\begin{bmatrix} 1 & -ie^{-\varkappa(2\gg(z)+1)} \\ 0 & 1 \end{bmatrix}, & & z\in \partial\mathcal{L}_J^{(\pm)},
\end{split}
\eea
\item non-tangential boundary values of $Z(z;\varkappa)$ from the upper/lower half-planes belong to $L^2_{loc}(U)$,
\item $Z(z;\varkappa)=\1+\BigO{z^{-1}}$ as $z\ra\infty$.
\end{enumerate}
\end{problem}
Notice that the jumps of $Z(z;\varkappa)$ on $\partial\mathcal{L}_{E,J}^{(\pm)}$ are exponentially small provided that $z$ is a fixed distance from any endpoint of $J$ and $E$.  The large $\varkappa$ approximation of $Z(z;\ka)$ outside small discs around all endpoints (i.e. ignoring the jumps on $\partial\mathcal{L}_{E,J}^{(\pm)}$) is given by the outer parametrix (solution of the model RHP), which we denote $\Psi(z;\ka)$ and is constructed in the next Subsection. The large $\varkappa$ approximation of $Z(z;\ka)$ near the endpoints is described by local parametrices which are constructed in Subsection \ref{subsec: local parametrices}.

\subsection{The solution of the model problem $\Psi$}\label{ssect-sol_mod_gen}

We obtain the model RHP by ignoring the jumps of RHP \ref{RHPZ} for $z\in\partial\mathcal{L}_{E,J}^{(\pm)}$ and by prescribing  certain endpoint behavior when $z=a_j$, $j=1,\ldots,2g+2$, and $z=b_k$, $k=1,\ldots,n$.

\begin{problem}\label{modelRHP}
Find a $2\times2$ matrix-function $\Psi(z;\ka)$ which satisfies:
\begin{enumerate}
    \item $\Psi(z;\varkappa)$ is analytic on $\overline{\mathbb{C}}\setminus[a_1,a_{2g+2}]$,
    \item $\Psi(z;\varkappa)$ has the jump conditions
    \bea\label{jumpPsi}
    \begin{split}
    & \Psi(z_+;\varkappa)=\Psi(z_-;\varkappa) (-i\s_1), & & z\in \mathring{E}, \\
    & \Psi(z_+;\varkappa)=\Psi(z_-;\varkappa)(i\s_1), & & z\in \mathring{J}, \\
    & \Psi(z_+;\varkappa)=\Psi(z_-;\varkappa)e^{i\ka\Om_j\sigma_3}, & & z\in (a_{2j},a_{2j+1}), ~ j=1,\ldots,g,
    \end{split}
    \eea
    \item $\Psi(z;\varkappa)=\1+\BigO{z^{-1}}$ as $z\to\infty$,
    \item $\Psi(z;\varkappa)=\BigO{|z-a_j|^{-\frac{1}{4}}}$ as $z\to a_j$, $j=1,\ldots,2g+2$,
    \item $\Psi(z;\varkappa)=\BigO{|z-b_k|^{-\frac{1}{2}}}$ as $z\to b_k$, $k=1,\ldots,n$,
    \item $\Psi(z_\pm;\varkappa)$ has $L^2$ behavior for $z\in U\setminus\{b_1,\ldots,b_n\}$.
\end{enumerate}
\end{problem}

\begin{figure}
\begin{center}
\begin{tikzpicture}[scale=1.25]

\draw[black,thick] (-5,0) -- (-2,0);
\draw[black,thick] (-1,0) -- (1,0);
\draw[black,thick] (3,0) -- (5,0);

\filldraw[black] (-5,0) circle [radius=1.5pt] node[anchor=north] {$a_1$};
\filldraw[black] (-2,0) circle [radius=1.5pt] node[anchor=north] {$a_2$};
\filldraw[black] (-1,0) circle [radius=1.5pt] node[anchor=north] {$a_3$};
\filldraw[black] (1,0) circle [radius=1.5pt] node[anchor=north] {$a_4$};
\filldraw[black] (3,0) circle [radius=1.5pt] node[anchor=north] {$a_5$};
\filldraw[black] (5,0) circle [radius=1.5pt] node[anchor=north] {$a_6$};

\node[above] at (-3.5,0) {$\tilde{E}$};
\node[above] at (0,0) {$\tilde{J}$};
\node[above] at (4,0) {$\tilde{J}$};

\end{tikzpicture}
\end{center}
\caption{The sets $\tilde{J}$ and $\tilde{E}$ corresponding to the setup in Figure \ref{fig: J and E}.}\label{fig: tilde J and tilde E}
\end{figure}
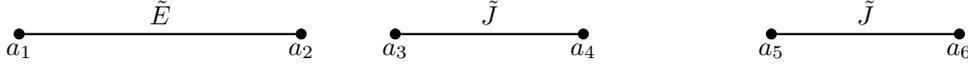

Note that solution of the RHP \ref{modelRHP} is not unique because $\Psi(z)\not\in L_{loc}^2$ near any double point $b_k$.  To properly fix a solution to RHP \ref{modelRHP}, we introduce the auxiliary $\tgg$-function and another model RHP.  First, we fix $\epsilon>0$ such that
\begin{align}
    \epsilon<\min_{\substack{j=1,\ldots,g+1 \\ k=1,\ldots,n}}|a_{2j-1}-b_k|,
\end{align}
define the sets
\begin{align}
    K_J=\left\{j\in\{1,\ldots,g+1\}:[a_{2j-1},a_{2j-1}+\epsilon]\subset J\right\}, \cr
    K_E=\left\{j\in\{1,\ldots,g+1\}:[a_{2j-1},a_{2j-1}+\epsilon]\subset E\right\},
\end{align}
and define the intervals
\begin{align}
    \tilde{J}=\bigcup_{j\in K_J}[a_{2j-1},a_{2j}], \qquad \tilde{E}=\bigcup_{j\in K_E}[a_{2j-1},a_{2j}].
\end{align}
See Figure \ref{fig: tilde J and tilde E} for an example of the sets $\tilde{J}$, $\tilde{E}$ corresponding to the sets $J$, $E$ as described in Figure \ref{fig: J and E}.  Notice that $\tilde{J}\cup\tilde{E}=J\cup E$ but $\tilde{J},\tilde{E}$ have no double points, i.e. $\tilde{J}\cap\tilde{E}=\emptyset$.  If $n=0$, then $J,E$ have no double points so $\tilde{J}=J$ and $\tilde{E}=E$.  If $g=0$, then $\tilde{J}=[a_1,a_2]$ and $\tilde{E}=\emptyset$ if $[a_1,b_1]\subset J$ or $\tilde{E}=[a_1,a_2]$ and $\tilde{J}=\emptyset$ if $[a_1,b_1]\subset E$.  We now define the $\tgg$-function to be the solution of the following scalar RHP:
\bi\label{RHPtgg}
\item $\tgg(z;\varkappa)$ satisfies the following jump conditions:
\begin{align}
    \tgg(z_+;\varkappa)+\tgg(z_-;\varkappa)&=\begin{cases}
    0, & z\in (\mathring{E}\cap\mathring{\tilde{E}})\cup(\mathring{J}\cap\mathring{\tilde{J}}), \\
    i\pi\cdot\text{sgn}\Im\varkappa, & z\in \mathring{E}\cap\mathring{\tilde{J}}, \\
    -i\pi\cdot\text{sgn}\Im\varkappa, & z\in \mathring{J}\cap\mathring{\tilde{E}},
    \end{cases} \label{tggJumps1} \\
    \tgg(z_+;\varkappa)-\tgg(z_-;\varkappa)&=W_j, ~~~ z\in(a_{2j},a_{2j+1}), ~ j=1,\dots,g, \label{tggJumps2}
\end{align}
where the constants $W_j$ are to be determined.
\item  $\tgg(z;\varkappa)$ is analytic on $\overline{\C}\setminus{[a_1,a_{2g+2}]}$ and is an $L^2_{loc}$ function on the jump contour.
\ei
If $\varkappa\in\mathbb{R}$, then $\mathrm{sgn}\Im\varkappa$ is to be understood in the sense of continuation from the upper or lower half plane.  It can be verified that
\be\label{tgg}
\tgg(z;\varkappa)=\frac{R(z)}{2\pi i}\le(\int_{J\cap\tilde{E}}\frac{ -i\pi\cdot\text{sgn}\Im\varkappa}{(\z-z)R_+(\z)}d\z+\int_{\tilde{J}\cap E}\frac{i\pi\cdot\text{sgn}\Im\varkappa }{(\z-z)R_+(\z)}d\z+
\sum_{j=1}^{g}\int_{a_{2j}}^{a_{2j+1}}\frac{
{W_j}
d\z}{(\z-z)R(\z)}\ri),
\ee
where $R(z)$ was defined in \eqref{radical} and the constants $W_j\in\R$ are (uniquely) chosen (in the standard way), so that $\tgg(z;\varkappa)$ is analytic at $z=\infty$.  According to \eqref{tgg}, 
\be\label{tgg-loc}
\tgg(z;\varkappa) = \pm \frac{1}{2}\text{sgn}(\Im z)\text{sgn}(\Im\varkappa)\ln(z-b_k)+\BigO{1}, \qquad z\to b_k,
\ee
where the sign `$-$' if we have $J$ to the left of $b_k$ and $E$ to the right and the sign is `$+$' in the opposite case.

We define 
\be\label{Psi_tilde}
\tilde \Psi(z;\ka)= e^{-\tgg(\infty;\varkappa)\s_3} \Psi(z;\ka) e^{\tgg(z;\varkappa)\s_3},
\ee
which is the solution to the following RHP:

\begin{problem}
\label{modelRHPtilde}
Find a $2\times2$ matrix-function $\tilde\Psi(z;\ka)$ which satisfies:
\begin{enumerate}
    \item $\tilde{\Psi}(z;\varkappa)$ is analytic on $\overline{\mathbb{C}}\setminus[a_1,a_{2g+2}]$,
    \item $\tilde{\Psi}(z;\varkappa)$ has the jump conditions
    \bea
    \begin{split}
        &\tilde{\Psi}(z_+;\varkappa)=\tilde{\Psi}(z_-;\varkappa)(-i\sigma_1), & & z\in\mathring{\tilde{E}}, \\
        &\tilde{\Psi}(z_+;\varkappa)=\tilde{\Psi}(z_-;\varkappa)(i\sigma_1), & & z\in\mathring{\tilde{J}}, \\
        &\tilde{\Psi}(z_+;\varkappa)=\tilde{\Psi}(z_-;\varkappa)e^{(i\varkappa\Omega_j+W_j)\sigma_3}, & & z\in(a_{2j},a_{2j+1}), ~ j=1,\ldots,g,
    \end{split}
    \eea
    \item $\tilde{\Psi}(z;\varkappa)=\1+\BigO{z^{-1}}$ as $z\to\infty$,
    \item $\tilde{\Psi}(z;\varkappa)=\BigO{|z-a_j|^{-\frac{1}{4}}}$ as $z\to a_j$, $j=1,2,\ldots,2g+2$.
\end{enumerate}
\end{problem}


\begin{remark}\label{rem-poles-in-kappa}
As is well known \cite{DIZ97}, a solution of RHP \ref{modelRHPtilde}, if one exists, is unique and can be expressed in terms of  Riemann $\Theta$-functions.  However, there can be (depending on the configuration of $\tilde{E}, \tilde{J}$) a countable, nowhere dense set  $\{\varkappa_{j}+i\pi k\}_{j,k\in\mathbb{Z}}$, $\varkappa_j\in\mathbb{R}$, of values of $\varkappa$, for which RHP \ref{modelRHPtilde} has no solution ($\tilde{\Psi}(z;\varkappa)$ has poles at $\varkappa=\varkappa_{j}+i\pi k$).  Let us discuss a few scenarios.  Suppose
\begin{itemize}
    \item $\tilde{E}=[a_1,a_2]\cup[a_{2g+1},a_{2g+2}]$ and $\tilde{J}=[a_3,a_4]\cup\cdots\cup[a_{2g-1},a_{2g}]$, $g\geq2$; the asymptotics of $\varkappa_j$, together with the asymptotics of the corresponding eigenvectors of the associated operator $\mathscr{K}$ from \eqref{operator K}, was established in \cite[eq. (7.19), (7.30)]{BKT16}.
    
    \item $\tilde{E}=[a_1,a_2]$ and $\tilde{J}=[a_3,a_4]$ (i.e. $g=1$); it can be shown that
    \begin{align*}
        \varkappa_j=\frac{i\pi}{\tau}\left(j-\frac{1}{2}\right), \qquad \mathrm{where} ~ \tau=i\frac{\int_{a_3}^{a_4}\frac{ds}{|R(s)|}}{\int_{a_2}^{a_3}\frac{ds}{|R(s)|}}\in i\mathbb{R_+}.
    \end{align*}
    This is possible {to obtain} because the zeros of the Jacobi $\Theta$-function (i.e. Riemann $\Theta$-function for $g=1$) are explicitly known \cite[eq. 8.189.4]{GRtable}.
    
    \item $\tilde{E}=[a_1,a_2]$ and $\tilde{J}=\emptyset$ (this is the scenario of Section \ref{sec:jump}); there is no set of `bad' $\varkappa_j$ because RHP \ref{modelRHPtilde} is independent of $\varkappa$ and its solution is given by $\tilde \Psi(z)=\le(\frac{z-a_1}{z-a_2}\ri)^{\frac{\s_1}{4}}$.
\end{itemize}
\end{remark}


\bl \label{lem-mod-sol}
There exists a solution $\Psi(z;\ka)$ to RHP \ref{modelRHP} such that the matrix function $\tilde\Psi(z;\ka)$ given by \eqref{Psi_tilde} 
solves RHP \ref{modelRHPtilde}.
\el

\begin{proof}
Analyticity of $ \tilde\Psi(z;\ka)$ and its asymptotics at $z=\infty$ are clear.  We only need to check the jump conditions, which are easily verified via \eqref{jumpPsi} and the jump conditions for $\tgg(z)$.
\end{proof}
We fix the solution to the model RHP \ref{modelRHP} as  
\be\label{Psi_tilde_1}
\Psi(z;\ka)= e^{\tgg(\infty;\varkappa)\s_3} \tilde \Psi(z;\ka) e^{-\tgg(z;\varkappa)\s_3}.
\ee

\br\label{rem-anal-param}
Note that $\det \Psi\equiv 1$ and $\tilde \Psi(z;\ka)$ is analytic (and invertible) near $z=b_k$ on any shore of the cut $E\cup J$.  Therefore, the RHP \ref{modelRHPtilde} has a unique solution $\tilde \Psi(z;\ka)$, that, in general, can be constructed in terms of Riemann $\Theta$-functions. 
\er

\subsection{Local parametrices}\label{subsec: local parametrices}

Construction of the local parametrices at the endpoints $a_j$, $j=1,\dots,2g+2$ is essentially the same as in \cite[Section 4.3]{BKT16}.  Here we consider a parametrix at $b_k\in(a_{2j-1}, a_{2j})$.  Let us define the approximate solution to RHP \ref{RHPZ} as
\begin{align}
    \wh{Z}(z;\varkappa)=\begin{cases}
     \tilde{\mathcal{P}}_j(z;\varkappa), & z\in\mathcal{D}_{a_j}, ~~~ j=1,\ldots,2g+2, \\
     \mathcal{P}_k(z;\varkappa), & z\in\mathcal{D}_{b_k}, ~~~ k=1,\ldots,n, \\
     \Psi(z;\varkappa), & \mbox{elsewhere},
    \end{cases}\label{hatZ explicit}
\end{align}
where $\Psi(z;\varkappa)$ a the solution of RHP \ref{modelRHP} given by \eqref{Psi_tilde_1}, and $\tilde{\mathcal{P}}_j, \mathcal{P}_k$ are the parametrices.  The parametrices $\tilde{\mathcal{P}}_j$ are given by the standard Bessel type parametrix (see \cite[eq. 4.29]{BKT16}) so we omit the details.  However, the construction of the parametrices $\mathcal{P}_k$, in terms of hypergeometric functions, is new.  First, we define the local coordinate $\zeta_k(z)$ by relating the $\gg$-function and the 4-point $\gg_4$-function, see \eqref{g4}.  Explicitly,
\begin{align}\label{g_4=gg}
    \gg_4(\zeta_k(z))=\begin{cases}
     \gg(z), & \text{for}~(b_k-\delta,b_k)\subset E, \\
     -\gg(z), & \text{for}~(b_k-\delta,b_k)\subset J,
    \end{cases}
\end{align}
where $\delta>0$ is sufficiently small.  The following Proposition guarantees that such a $\zeta_k(z)$ exists.
\begin{proposition}\label{prop: zeta}
Let functions $\wh \phi(z), \wh \psi(z)$ be analytic in a disc $\Do$ centered at the origin and let $\phi(z)=\frac{\ln z}{i\pi}+\wh \phi(z)$,  $\psi(z)=\frac{\ln z}{i\pi}+\wh \psi(z)$.  Then, there exists a function $\z(z)=cz(1+y(z))$ analytic in $\tilde{\mathcal{D}}$, a disk centered at $z=0$ which is a subset of $\mathcal{D}$, where $c\neq0$ and $y(0)=0$, such that
\be\label{near1}
\phi(\z(z))=\psi(z).
\ee
\end{proposition}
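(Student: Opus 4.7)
The plan is to substitute the ansatz $\z(z)=cz(1+y(z))$ into the functional equation $\phi(\z(z))=\psi(z)$, reduce it to an analytic equation $F(z,y)=0$ in two variables, and invoke the analytic implicit function theorem to solve for $y$. The nonzero constant $c$ will be pinned down by matching constant terms at $z=0$.

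Writing $\ln(cz(1+y(z))) = \ln c+\ln z+\ln(1+y(z))$ for a fixed branch of $\ln z$ on a sector inside $\Do$ and with $\ln(1+\cdot)$ the principal branch near $1$, the equation $\phi(\z(z))=\psi(z)$ becomes, after cancelling the common $\ln z/(i\pi)$ terms from the two sides,
\be\label{reduced-F}
F(z,y) := \frac{\ln c+\ln(1+y)}{i\pi}+\wh\phi\!\le(cz(1+y)\ri)-\wh\psi(z)=0.
\ee
Since $\wh\phi$ and $\wh\psi$ are analytic at $0$, $F$ is jointly analytic in $(z,y)$ in a neighborhood of $(0,0)$.

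I would next impose $F(0,0)=0$, which forces $\ln c = i\pi\bigl(\wh\psi(0)-\wh\phi(0)\bigr)$, so that $c=\exp\bigl(i\pi(\wh\psi(0)-\wh\phi(0))\bigr)\ne 0$. The essential computation is that of $\partial_y F$ at the origin:
\be
\partial_y F(0,0)=\le.\frac{1}{i\pi(1+y)}\ri|_{(0,0)}+\le.cz\,\wh\phi'\bigl(cz(1+y)\bigr)\ri|_{(0,0)}=\frac{1}{i\pi}\ne 0,
\ee
so the analytic implicit function theorem produces a unique analytic $y(z)$ on some disc $\tilde\Do\subset\Do$ centered at $0$, with $y(0)=0$ and $F(z,y(z))\equiv 0$. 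Setting $\z(z):=cz(1+y(z))$ then yields the function claimed in \eqref{near1}.

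The main (albeit mild) obstacle is to justify that $\z$ is genuinely analytic on a full disc rather than only on a sector, despite the branch ambiguity of $\ln z$. This is resolved by observing that $\ln\z(z)-\ln z=\ln c+\ln(1+y(z))$ is single-valued and analytic near $z=0$, so the branch ambiguity cancels between $\phi(\z(z))$ and $\psi(z)$ in the original functional equation; the reduced equation \eqref{reduced-F} is therefore insensitive to the branch chosen, and the unique analytic $y$ delivered by the implicit function theorem extends to all of $\tilde\Do$, making $\z(z)$ itself analytic on that disc with $\z(0)=0$ and $\z'(0)=c\ne 0$.
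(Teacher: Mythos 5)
Your proof is correct and follows essentially the same route as the paper's: the same ansatz $\z(z)=cz(1+y(z))$, the same choice $c=e^{i\pi(\wh\psi(0)-\wh\phi(0))}$, the same reduction to an analytic equation $F(z,y)=0$ with $\partial_y F(0,0)=1/(i\pi)\neq0$, and the same invocation of the implicit function theorem. The closing paragraph on why the branch ambiguity of $\ln z$ cancels is a useful clarification that the paper leaves implicit, but it does not change the structure of the argument.
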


\begin{proof} 
Substituting $\z(z)$ in \eqref{near1} and taking $c=e^{i\pi(\wh \psi(0)-\wh\phi(0))}$,
we obtain the equation
\be
F(z,y)=\frac{1}{i\pi} \ln(1+y)+\wh \phi(cz(1+y))-\wh \psi(z) -\wh \psi(0)+\wh\phi(0)=0,
\ee
which is true for $(z,y)=(0,0)$. Since
\be
\frac{\partial F}{\partial y}=\frac{1}{i\pi(1+y)}+cz\wh \phi'(cz(1+y))\neq 0
\ee
at $(z,y)=(0,0)$, the conclusion follows from the Implicit Function Theorem.
\end{proof}
We denote by $\Gamma_4(z;\lambda)$ the solution of RHP \ref{RHPGamma} in the scenario with 1 double point $b_1=0$, $E=[-a,0]$ and $J=[0,a]$, where $a>0$.  It was shown in \cite[eq. 17]{BBKT19} that $\Gamma_4(z;\lambda)$ can be explicitly expressed in terms of hypergeometric functions.  We also let $\Psi_4(z;\varkappa)$, $\tgg_4(\zeta_k)$ (see \eqref{Psi_4}, \eqref{tg4}, respectively) denote the solution of RHP \ref{modelRHP} in the same scenario.  When $(b_k-\delta,b_k)\subset E$, the parametrices $\mathcal{P}_{k}$ are given by

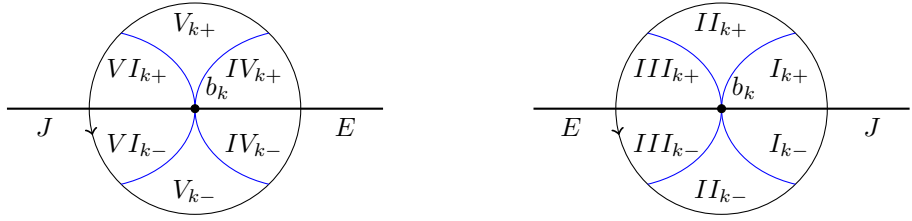
\begin{figure}
\begin{center}
\begin{tikzpicture}[scale=1.00]

\draw[black,thick] (1,0) -- (6,0);

\draw[black,postaction = {decorate, decoration = {markings, mark = at position .54 with {\arrow[black,thick]{>};}}}] (3.5,0) circle [radius=40pt];

\draw[blue] (3.5,0) arc [start angle=0, end angle=72.5, x radius=40pt, y radius=30pt];
\draw[blue] (3.5,0) arc [start angle=0, end angle=-72.5, x radius=40pt, y radius=30pt];
\draw[blue] (3.5,0) arc [start angle=180, end angle=252.5, x radius=40pt, y radius=30pt];
\draw[blue] (3.5,0) arc [start angle=180, end angle=107.5, x radius=40pt, y radius=30pt];

\filldraw[black] (3.5,0) circle [radius=1.5pt];

\node[above] at (3.8,0) {$b_k$};

\node[below] at (1.5,0) {$E$};
\node[below] at (5.5,0) {$J$};

\node[left] at (4.8,0.5) {$I_{k+}$};
\node[below] at (3.5,1.4) {$II_{k+}$};
\node[right] at (2.2,0.5) {$III_{k+}$};
\node[right] at (2.2,-0.5) {$III_{k-}$};
\node[above] at (3.5,-1.4) {$II_{k-}$};
\node[left] at (4.8,-0.5) {$I_{k-}$};

\draw[black,thick] (-6,0) -- (-1,0);

\draw[black,postaction = {decorate, decoration = {markings, mark = at position .54 with {\arrow[black,thick]{>};}}}] (-3.5,0) circle [radius=40pt];

\draw[blue] (-3.5,0) arc [start angle=0, end angle=72.5, x radius=40pt, y radius=30pt];
\draw[blue] (-3.5,0) arc [start angle=0, end angle=-72.5, x radius=40pt, y radius=30pt];
\draw[blue] (-3.5,0) arc [start angle=180, end angle=252.5, x radius=40pt, y radius=30pt];
\draw[blue] (-3.5,0) arc [start angle=180, end angle=107.5, x radius=40pt, y radius=30pt];

\filldraw[black] (-3.5,0) circle [radius=1.5pt];

\node[above] at (-3.2,0) {$b_k$};

\node[below] at (-5.5,0) {$J$};
\node[below] at (-1.5,0) {$E$};

\node[left] at (-2.2,0.5) {$IV_{k+}$};
\node[below] at (-3.5,1.4) {$V_{k+}$};
\node[right] at (-4.8,0.5) {$VI_{k+}$};
\node[right] at (-4.8,-0.5) {$VI_{k-}$};
\node[above] at (-3.5,-1.4) {$V_{k-}$};
\node[left] at (-2.2,-0.5) {$IV_{k-}$};

\end{tikzpicture}
\end{center}
\vspace{-10pt}
\caption{Two possible configurations of the set $\mathcal{D}_{b_k}$ and the regions $I_{k\pm},\ldots,VI_{k\pm}$.} \label{fig: para}
\vspace{-5pt}
\end{figure}

\be\label{ParamIeLeft}
\Pot_k(z;\ka)=
\begin{cases}
\Psi(z;\varkappa)\Psi^{-1}_4(\zeta_k;\varkappa)\Gamma_4(\zeta_k;2e^{-\varkappa})e^{\varkappa \gg_4(\zeta_k)\sigma_3}\begin{bmatrix} 1 & \pm ie^{-\varkappa(2\gg_4(\zeta_k)+1)} \\ 0 & 1 \end{bmatrix}, &\text{for $z\in I_{k\pm}$}, \\
\Psi(z;\varkappa)\Psi^{-1}_4(\zeta_k;\varkappa)\Gamma_4(\zeta_k;2e^{-\varkappa})e^{\varkappa\gg_4(\zeta_k)\sigma_3}, &\text{for $z\in II_{k\pm}$}, \\
\Psi(z;\varkappa)\Psi^{-1}_4(\zeta_k;\varkappa)\Gamma_4(\zeta_k;2e^{-\varkappa})e^{\varkappa\gg_4(\zeta_k)\sigma_3}\begin{bmatrix} 1 & 0 \\ \mp ie^{\varkappa(2\gg_4(\zeta_k)-1)} & 1 \end{bmatrix}, &\text{for $z\in III_{k\pm}$},
\end{cases}
\ee
where the regions $I_{k\pm},II_{k\pm},III_{k\pm}$ are described in Figure \ref{fig: para}, right panel.  When $(b_k-\delta,b_k)\subset J$, the parametrices $\mathcal{P}_{k}$ are given by
\be\label{ParamIeRight}
\Pot_k(z;\ka)=
\begin{cases}
\Psi(z;\varkappa)\s_2\Psi^{-1}_4(\zeta_k;\varkappa)\Gamma_4(\zeta_k;2e^{-\varkappa})e^{\varkappa\gg_4(\zeta_k)\sigma_3}\begin{bmatrix} 1 & 0 \\ \mp ie^{\varkappa(2\gg_4(\zeta_k)-1)} & 1 \end{bmatrix}\s_2, &\text{for $z\in VI_{k\pm}$}, \\
\Psi(z;\varkappa)\s_2\Psi^{-1}_4(\zeta_k;\varkappa)\Gamma_4(\zeta_k;2e^{-\varkappa})e^{\varkappa\gg_4(\zeta_k)\sigma_3}\s_2, &\text{for $z\in V_{k\pm}$}, \\
\Psi(z;\varkappa)\s_2\Psi^{-1}_4(\zeta_k;\varkappa)\Gamma_4(\zeta_k;2e^{-\varkappa})e^{\varkappa\gg_4(\zeta_k)\sigma_3}\begin{bmatrix} 1 & \pm ie^{-\varkappa(2\gg_4(\zeta_k)+1)} \\ 0 & 1 \end{bmatrix}\s_2, &\text{for $z\in IV_{k\pm}$},
\end{cases}
\ee
where the regions $IV_{k\pm},V_{k\pm},VI_{k\pm}$ are described in Figure \ref{fig: para}, left panel.


\begin{remark}\label{rem: S_e}
For a given $\epsilon>0$ we define  $\mathcal{S}_\epsilon\subset \C$ as
\begin{align}
    \mathcal{S}_\epsilon:=\left\{\varkappa\in\mathbb{C}:|\Im\varkappa|\leq\pi, \Re\varkappa\geq\ln(2), \text{ and } ||\Psi(z;\varkappa)^{\pm1}||<\frac{N(z)}{\epsilon}\right\},
\end{align}
where $\Psi(z;\varkappa)$ is the solution of RHP \ref{modelRHP}, given by \eqref{Psi_tilde_1}, $\displaystyle ||\Psi||:=\max_{i,j\in\{1,2\}}|\Psi_{i,j}|$ and $K(z)$ is independent of $\varkappa$.  The image of $\mathcal{S}_\epsilon$ under the map $\lambda=2e^{-\varkappa}$ is a subset of the unit ball $|\lambda|\leq1$, see Figure \ref{fig: kappa}.  In the case where $\tilde{E}=[a_1,a_2]\cup[a_{2g+1},a_{2g+2}], \tilde{J}=[a_3,a_4]\cup\cdots\cup[a_{2g-1},a_{2g}]$, $g\geq2$, studied in \cite{BKT16}, see also
Remark \ref{rem-poles-in-kappa}, the set $\mathcal{S}_\epsilon$ with a sufficiently small $\epsilon$ is the horizontal strip $|\Im\varkappa|\leq\pi$ without small closed neighborhood surrounding each pole $\varkappa_j, \varkappa_j\pm i\pi$ of the solution $\tilde\Psi(z;\varkappa)$ of RHP \ref{modelRHPtilde}.  A precise description of the set $\mathcal{S}_\epsilon$ for a general configuration of the intervals $\tilde{E}, \tilde{J}$ would require further study.  This study is not carried out in this paper since in Sections \ref{sec:jump}-\ref{sec: exact to approx} below we are interested only in the one interval case, for which $\mathcal{S}_\epsilon=\{\varkappa\in\mathbb{C}:|\Im\varkappa|\leq\pi, \Re\varkappa\geq\ln(2)\}$, see Remark \ref{rem-poles-in-kappa}.  For these reasons, we assume throughout the remainder of this Section that $\mathcal{S}_\epsilon$ is non-empty.
\end{remark}

We conclude this subsection by demonstrating the key properties of the parametrices.

\begin{lemma}\label{lem-param}
Let $\varkappa\in\mathcal{S}_\epsilon$.  The parametrix $\mathcal{P}_{k}(z;\varkappa)$ satisfies the following properties:
\begin{enumerate}
    \item $Z(z;\ka)\Pot_k^{-1}(z;\ka)$ is single-valued for $z\in\mathcal{D}_{b_k}$,
    \item $\Pot_k(z;\ka)\in L^2_{loc}$ on the jump contours $(\partial\mathcal{L}_J^{(\pm)}\cup\partial\mathcal{L}_E^{(\pm)}\cup E\cup J)\cap\mathcal{D}_{b_k}$ {provided that $\Re\varkappa\notin[\ln(2),\infty)$ (if and only if $\lambda\notin[-1,1]$)},
    \item for $z\in\partial\mathcal{D}_{b_k}$, we have the uniform approximation $\Psi(z;\ka)\Pot_k^{-1}(z;\ka)=\1+\BigO{\varkappa^{-1}}$ as $\ka\ra\infty$.
\end{enumerate}
\end{lemma}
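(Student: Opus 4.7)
The plan is to verify the three claims in turn, exploiting the fact that $\mathcal{P}_k$ is built from the exact four-endpoint solution $\Gamma_4$ conjugated by the local model solutions $\Psi_4$ and $\Psi$.

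For part (1), I would check that $\mathcal{P}_k$ has the same jumps as $Z$ on the portion of the jump contour inside $\mathcal{D}_{b_k}$. The prefactor $\Psi(z)\Psi_4^{-1}(\zeta_k(z))$ is single-valued on $\mathcal{D}_{b_k}$ because on $E\cap\mathcal{D}_{b_k}$ both $\Psi$ and $\Psi_4$ share the jump $(-i\sigma_1)$, and likewise $(i\sigma_1)$ on $J\cap\mathcal{D}_{b_k}$, so these jumps cancel in the product. The remaining factor $\Gamma_4(\zeta_k;2e^{-\varkappa})e^{\varkappa\gg_4(\zeta_k)\sigma_3}$, together with the triangular lens factors appearing in \eqref{ParamIeLeft}, \eqref{ParamIeRight}, is precisely the $\Gamma\to Y\to Z$ transformation \eqref{y-def}, \eqref{mat-Z} applied to the four-point problem in the $\zeta$-variable. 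Via the identification $\gg_4(\zeta_k(z))=\pm\gg(z)$ from \eqref{g_4=gg} together with conformality of $\zeta_k$ (Proposition \ref{prop: zeta}), these jumps pull back to exactly the local $Z$-jumps \eqref{Z jump} inside $\mathcal{D}_{b_k}$.

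For part (2), the local $L^2$ behavior on the jump contours inside $\mathcal{D}_{b_k}$ is inherited from the analogous property of $\Gamma_4$ (condition (4) of RHP \ref{RHPGamma} applied to the four-point problem), which accounts for the only potential singularity at $b_k$ itself. The prefactor $\Psi\Psi_4^{-1}$ has at worst an $\mathcal{O}(|z-b_k|^{-1/2})$ singularity at $b_k$ by condition (5) of RHP \ref{modelRHP}, which is still in $L^2_{loc}$. The triangular lens factors appearing in regions $I_{k\pm}$, $III_{k\pm}$, $IV_{k\pm}$, $VI_{k\pm}$ are bounded on the relevant pieces of the jump contour thanks to the sign conditions $|\Re\gg|<\tfrac12$ away from $U$ established in Proposition \ref{prop: g-function}.

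For part (3), set $M(\zeta;\varkappa):=\Psi_4^{-1}(\zeta;\varkappa)\Gamma_4(\zeta;2e^{-\varkappa})e^{\varkappa\gg_4(\zeta)\sigma_3}$. Then on $\partial\mathcal{D}_{b_k}$ one has $\mathcal{P}_k=\Psi(z)\,M(\zeta_k;\varkappa)\,T_\pm$, where $T_\pm$ is either the identity or a triangular matrix with off-diagonal entries of the form $\pm ie^{\mp\varkappa(2\gg_4\mp 1)}$. By Proposition \ref{prop: g-function} these off-diagonal entries are exponentially small in $\Re\varkappa$ on the pieces of $\partial\mathcal{D}_{b_k}$ where they occur, so $T_\pm=\mathbf{1}+\mathcal{O}(e^{-c\Re\varkappa})$, and therefore
\[
\Psi\mathcal{P}_k^{-1}=\Psi T_\pm^{-1} M^{-1}\Psi^{-1}=\mathbf{1}+\Psi(M^{-1}-\mathbf{1})\Psi^{-1}+\mathcal{O}(e^{-c\Re\varkappa}).
\]
The Deift--Zhou analysis of the explicit four-point problem in \cite{BBKT19} yields $M(\zeta;\varkappa)=\mathbf{1}+\mathcal{O}(\varkappa^{-1})$ uniformly for $\zeta$ on the compact set $\zeta_k(\partial\mathcal{D}_{b_k})$, and $\Psi^{\pm 1}$ are uniformly bounded on $\partial\mathcal{D}_{b_k}$ for $\varkappa\in\mathcal{S}_\epsilon$ by the very definition of $\mathcal{S}_\epsilon$. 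The claim follows. The main obstacle is uniformity of the matching estimate $M=\mathbf{1}+\mathcal{O}(\varkappa^{-1})$ across all of $\mathcal{S}_\epsilon$, which reduces to uniform asymptotic control of the hypergeometric functions defining $\Gamma_4$ as $|\varkappa|\to\infty$ within the strip $|\Im\varkappa|\le\pi$.
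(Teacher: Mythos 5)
Your parts (1) and (3) mirror the paper's argument (the paper likewise verifies the jumps match, invokes Theorem \ref{GammaAsmptotics} for the four-point matching $\Psi_4^{-1}\Gamma_4 e^{\varkappa\gg_4\s_3}=\1+\BigO{\varkappa^{-1}}$, and uses boundedness of $\Psi^{\pm1}$ on $\partial\mathcal{D}_{b_k}$ for $\varkappa\in\mathcal{S}_\epsilon$). Part (2), however, contains a genuine gap. You assert that the prefactor $\Psi(z;\ka)\Psi_4^{-1}(\zeta_k(z);\ka)$ has ``at worst an $\BigO{|z-b_k|^{-1/2}}$ singularity at $b_k$ by condition (5) of RHP \ref{modelRHP}.'' But that condition bounds only $\Psi$; the factor $\Psi_4^{-1}$ has its own $\BigO{|\zeta_k|^{-1/2}}=\BigO{|z-b_k|^{-1/2}}$ singularity at the double point (e.g.\ since $\det\Psi_4\equiv 1$, so $\Psi_4^{-1}=\mathrm{adj}(\Psi_4)$), and the naive bound on the product is $\BigO{|z-b_k|^{-1}}$, which is not locally square integrable. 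Moreover, even granting $\BigO{|z-b_k|^{-1/2}}$, you could not conclude $\mathcal{P}_k\in L^2_{loc}$ from $\Gamma_4\in L^2_{loc}$: multiplying an $L^2_{loc}$ function with a power singularity at $b_k$ by an unbounded $|z-b_k|^{-1/2}$ prefactor can leave $L^2_{loc}$.

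The missing idea is that the square-root singularities of $\Psi$ and $\Psi_4^{-1}$ at $b_k$ cancel exactly, so the prefactor is in fact \emph{bounded} on $\mathcal{D}_{b_k}$, and it is this boundedness, not merely an $L^2$-type bound, that is needed. Writing
\be
\Psi(z;\ka)\Psi_4^{-1}(\zeta_k;\ka)=e^{\tgg(\infty;\ka)\s_3}\,\tilde\Psi(z;\ka)\,e^{-(\tgg(z;\ka)-\tgg_4(\zeta_k;\ka))\s_3}\left(\frac{z+a}{z-a}\right)^{-\s_1/4}e^{-\tgg_4(\infty;\ka)\s_3},
\ee
one uses \eqref{tgg-loc} together with the normalization \eqref{g_4=gg} of $\zeta_k$ to see that the $\pm\frac12\ln(z-b_k)$ singularities of $\tgg$ and $\tgg_4\circ\zeta_k$ cancel in the exponent, while $\tilde\Psi^{\pm1}$ is analytic and bounded near $b_k$ (Remark \ref{rem-anal-param}). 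Once the prefactor is bounded, the $L^2_{loc}$ claim follows directly from $\Gamma_4\in L^2_{loc}$ (condition (4) of RHP \ref{RHPGamma}), as you intended.
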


\begin{proof}
We prove this statement assuming that $(b_k-\delta,b_k)\subset E$, i.e. using only \eqref{ParamIeLeft}.  The proof for \eqref{ParamIeRight} will be similar with the use of Remark \ref{rem-sym}.  From the jump properties \eqref{gamma jump}, \eqref{gJJump}, \eqref{gEJump}, \eqref{jumpPsi}, it is easily verified that $\mathcal{P}_{k}(z;\varkappa)$ has the exact same jumps as $Z(z;\varkappa)$ for $z\in\mathcal{D}_{b_k}$. \\

To prove the second requirement we notice that since $\Gamma_4(z;\lambda)$ is an $L^2_{loc}$ matrix valued function provided that $\lambda\notin[-1,1]$, $\mathcal{P}_{k}(z;\varkappa)$ will also be $L^2_{loc}$ provided that $\Psi(z;\ka)\Psi^{-1}_4(\z_k;\ka)$ is bounded on $\Do_{b_k}$.  Note that, according to \eqref{tgg-loc}, $\tgg(z;\varkappa)-\tgg_4(\z_k;\varkappa)$ does not have logarithmic singularity at $z=b_k$, so that $e^{\tgg(z;\varkappa)-\tgg_4(\z_k;\varkappa)}$ is bounded in a neighborhood of $b_k$ (recall that the $\tgg_4$ function is the $\tgg$ function with one double point $b_1=0$ and $[-a,0]=E$, $[0,a]=J$, see \eqref{g4}).  Taking into the account the fact that $\tilde\Psi^{\pm 1}$ from \eqref{Psi_tilde_1} is bounded in $\Do_{b_k}$, we obtain
\be
    \Psi(z;\ka)\Psi^{-1}_4(\z_k;\ka)= e^{\tgg(\infty;\varkappa)\s_3} \tilde \Psi(z;\ka) e^{-(\tgg(z;\varkappa)-\tgg_4(\z_k;\varkappa))\s_3}\le(\frac{z+a}{z-a}\ri)^{-\frac{\s_1}{4}}e^{-\tgg_4(\infty;\varkappa)\s_3},
\ee
which is bounded in $\Do_{b_k}$. \\

We prove the third requirement in regions $II_{k\pm}$ only, as the proof is similar for the other regions.  According to Theorem \ref{GammaAsmptotics},
\be
    \Psi^{-1}_4(\z_k;\ka)\G_4(\z_k;\ka)e^{\ka\gg_4(\z_k)\s_3}=\1 + \BigO{\ka^{-1}}~~~~~~~~~~~~{\rm as}~~~\ka\ra\infty,
\ee
and thus
\begin{equation}\label{P Psi inv asymp}
    \Pot_k(z;\ka)\Psi^{-1}(z;\ka)=\Psi(z;\ka)(\1 + \BigO{\ka^{-1}})\Psi^{-1}(z;\ka)=\1 + \BigO{\ka^{-1}},
\end{equation}
since $\Psi^{\pm 1}(z;\ka)$ are bounded on $\partial\Do_{b_k}$ when $\ka\in\mathcal{S}_\epsilon$ (see Remark \ref{rem: S_e}).  The proof is now complete.
\end{proof}
We note that the parametrices $\tilde{\mathcal{P}}_j(z;\varkappa)$, $j=1,\ldots,2g+2$, also satisfy properties (1)-(3) of Lemma \ref{lem-param}, see \cite[Prop. 4.14]{BKT16} for more details.

\subsection{Small-norm problem}

\begin{figure}
\begin{center}
\begin{tikzpicture}[scale=1.00]

\filldraw[black] (-4,0) circle [radius=1.5pt];
\filldraw[black] (-2,0) circle [radius=1.5pt];
\filldraw[black] (0,0) circle [radius=1.5pt];
\filldraw[black] (2,0) circle [radius=1.5pt];
\filldraw[black] (5,0) circle [radius=1.5pt];
\filldraw[black] (7,0) circle [radius=1.5pt];

\draw[black,postaction = {decorate, decoration = {markings, mark = at position .54 with {\arrow[black,thick]{>};}}}] (-4,0) circle [radius=0.6cm];
\draw[black,postaction = {decorate, decoration = {markings, mark = at position .54 with {\arrow[black,thick]{>};}}}] (-2,0) circle [radius=0.6cm];
\draw[black,postaction = {decorate, decoration = {markings, mark = at position .54 with {\arrow[black,thick]{>};}}}] (-0,0) circle [radius=0.6cm];
\draw[black,postaction = {decorate, decoration = {markings, mark = at position .54 with {\arrow[black,thick]{>};}}}] (2,0) circle [radius=0.6cm];
\draw[black,postaction = {decorate, decoration = {markings, mark = at position .54 with {\arrow[black,thick]{>};}}}] (5,0) circle [radius=0.6cm];
\draw[black,postaction = {decorate, decoration = {markings, mark = at position .54 with {\arrow[black,thick]{>};}}}] (7,0) circle [radius=0.6cm];

\node[above] at (-4,0) {$a_1$};
\node[above] at (-2,0) {$a_2$};
\node[above] at (0,0) {$a_3$};
\node[above] at (2,0) {$b_1$};
\node[above] at (5,0) {$b_2$};
\node[above] at (7,0) {$a_4$};

\draw[black,postaction = {decorate, decoration = {markings, mark = at position .54 with {\arrow[black,thick]{>};}}}] (-3.586,0.424) .. controls (-3.3,0.75) and (-2.7,0.75) .. (-2.424,0.424);
\draw[black,postaction = {decorate, decoration = {markings, mark = at position .54 with {\arrow[black,thick]{>};}}}] (-3.586,-0.424) .. controls (-3.3,-0.75) and (-2.7,-0.75) .. (-2.424,-0.424);
\draw[black,postaction = {decorate, decoration = {markings, mark = at position .54 with {\arrow[black,thick]{>};}}}] (0.424,0.424) .. controls (0.7,0.75) and (1.3,0.75) .. (1.586,0.424);
\draw[black,postaction = {decorate, decoration = {markings, mark = at position .54 with {\arrow[black,thick]{>};}}}] (0.424,-0.424) .. controls (0.7,-0.75) and (1.3,-0.75) .. (1.586,-0.424);
\draw[black,postaction = {decorate, decoration = {markings, mark = at position .54 with {\arrow[black,thick]{>};}}}] (2.424,0.424) .. controls (2.7,0.75) and (4.3,0.75) .. (4.586,0.424);
\draw[black,postaction = {decorate, decoration = {markings, mark = at position .54 with {\arrow[black,thick]{>};}}}] (2.424,-0.424) .. controls (2.7,-0.75) and (4.3,-0.75) .. (4.586,-0.424);
\draw[black,postaction = {decorate, decoration = {markings, mark = at position .54 with {\arrow[black,thick]{>};}}}] (5.424,0.424) .. controls (5.7,0.75) and (6.3,0.75) .. (6.586,0.424);
\draw[black,postaction = {decorate, decoration = {markings, mark = at position .54 with {\arrow[black,thick]{>};}}}] (5.424,-0.424) .. controls (5.7,-0.75) and (6.3,-0.75) .. (6.586,-0.424);

\end{tikzpicture}
\end{center}
\vspace{0pt}
\caption{The contour $\Sigma_T$ with $g=1$ and $n=2$ double points.} \label{fig: T jumps}
\vspace{-0.0cm}
\end{figure}
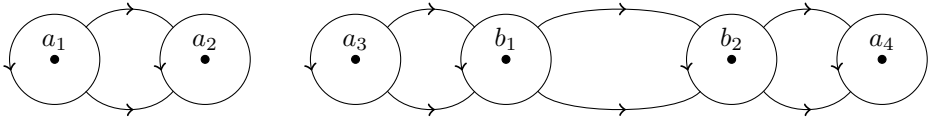

The error matrix $T:\mathbb{C}^{2\times2}\setminus\Sigma_T\to\mathbb{C}^{2\times 2}$ is defined as
\begin{align}\label{err def}
    T(z;\varkappa)=Z(z;\varkappa)\wh Z^{-1}(z;\varkappa),    
\end{align}
where $\Sigma_T:=\partial\mathcal{D}\cup\left((\partial\mathcal{L}_J^{(\pm)}\cup\mathcal{L}_E^{(\pm)})\setminus\mathcal{D}\right)$ and $\mathcal{D}:=\left(\cup_{j=1}^{2g+2}\mathcal{D}_{a_j}\right)\cup\left(\cup_{k=1}^n\mathcal{D}_{b_k}\right)$, see Figure \ref{fig: T jumps} for an example.  {We emphasize that $T(z;\varkappa)$ for $\lambda\in[-1,1]$ (see Figure \ref{fig: kappa} for corresponding $\varkappa$ values) is to be understood in terms of continuation from the upper or lower half plane.}  Using the definition of $\hat{Z}$ \eqref{hatZ explicit} and the jump properties of $Z$ \eqref{Z jump}, we see that
\begin{align}\label{T jumps}
    T(z_+;\varkappa)=T(z_-;\varkappa)\begin{cases}
     \Psi(z;\varkappa)\tilde{\mathcal{P}}^{-1}_j(z), & z\in\partial\mathcal{D}_{a_j}, ~ j=1,\ldots,g, \\
     \Psi(z;\varkappa)\mathcal{P}^{-1}_k(z), & z\in\partial\mathcal{D}_{b_k}, ~ k=1,\ldots,n, \\
     \Psi(z;\varkappa)\begin{bmatrix} 1 & -ie^{-\varkappa(2\gg(z)+1)} \\ 0 & 1 \end{bmatrix}\Psi^{-1}(z;\varkappa), & z\in\partial\mathcal{L}^{(\pm)}_E\setminus\mathcal{D}, \\
     \Psi(z;\varkappa)\begin{bmatrix} 1 & 0 \\ ie^{\varkappa(2\gg(z)-1)} & 1 \end{bmatrix}\Psi^{-1}(z;\varkappa), & z\in\partial\mathcal{L}^{(\pm)}_J\setminus\mathcal{D}, \\
     \1, & z\in \mathring{J}\cup \mathring{E}.
    \end{cases}
\end{align}
While proving Lemma \ref{lem-param}, we obtained the approximation
\begin{align*}
    \Psi(z;\varkappa)\tilde{\mathcal{P}}^{-1}_j(z)&=\1+\BigO{\varkappa^{-1}}, \qquad \mbox{uniformly for } z\in\partial\mathcal{D}_{a_j}, \\
    \Psi(z;\varkappa)\mathcal{P}^{-1}_k(z)&=\1+\BigO{\varkappa^{-1}}, \qquad \mbox{uniformly for } z\in\partial\mathcal{D}_{b_k},
\end{align*}
as $\varkappa\to\infty$ with $\varkappa\in\mathcal{S}_\epsilon$ for $j=1,\ldots,2g+2$, $k=1,\ldots,n$, see \eqref{P Psi inv asymp}.  Also note that $\Psi(z;\varkappa)$ is uniformly bounded for $z\in\partial\mathcal{L}^{(\pm)}_{J,E}\setminus\mathcal{D}$, $\varkappa\in\mathcal{S}_\epsilon$.  Thus, as $\varkappa\to\infty$ with $\varkappa\in\mathcal{S}_\epsilon$, we can write \eqref{T jumps} as
\begin{align}\label{T jumps asymp}
    T(z_+;\varkappa)=T(z_-;\varkappa)\begin{cases}
    \1, & z\in J\cup E, \\
    \1+\BigO{\varkappa^{-1}}, & z\in\partial\mathcal{D}, \\
    \1+\BigO{e^{-c\varkappa}}, & z\in\left(\partial\mathcal{L}_J^{(\pm)}\cup\partial\mathcal{L}_E^{(\pm)}\right)\setminus\mathcal{D},
    \end{cases}
\end{align}
where $c>0$ is sufficiently small.  In other words, $T(z;\varkappa)$ solves a small-norm problem, therefore it is well known (see, for example, \cite[p. 1529--1532]{DKMVZ99}) that
\begin{align}\label{E uni err}
    T(z;\varkappa)=\1+\BigO{\varkappa^{-1}} \qquad \mbox{as} ~ \varkappa\to\infty
\end{align}
uniformly for $z$ in compact subsets of $\mathbb{C}\setminus\Sigma_T$, provided $\varkappa\in\mathcal{S}_\epsilon$.  We conclude this section with a statement for the small $\lambda$ asymptotics of $\Gamma(z;\lambda)$.

\begin{theorem}\label{thm: gamma asymptotics}
Let $\varkappa\to\infty$ with $\varkappa\in\mathcal{S}_\epsilon$.
\begin{enumerate}
    \item If $z$ is in a compact subset of $\mathbb{C}\setminus(E\cup J)$, then we have the uniform approximation
\begin{align}
    \Gamma(z;2e^{-\varkappa})=e^{\varkappa \gg(\infty)\sigma_3}\left(\1+\BigO{\varkappa^{-1}}\right)\Psi(z;\varkappa)e^{-\varkappa \gg(z)\sigma_3}.
\end{align}
\item If $z_\pm$ is in a compact subset of $\mathring{E}\cup\mathring{J}$, then we have the uniform approximation
\begin{multline}
    \Gamma(z_\pm;2e^{-\varkappa})=e^{\varkappa \gg(\infty)\sigma_3}\left(\1+\BigO{\varkappa^{-1}}\right)\Psi(z_\pm;\varkappa) \\
    \times\begin{bmatrix} 1 & \mp ie^{-\varkappa(2\gg(z_\pm)+1)}\chi_{J}(z) \\ \pm ie^{\varkappa(2\gg(z_\pm)-1)}\chi_{E}(z) & 1 \end{bmatrix}e^{-\varkappa\gg(z_\pm)\sigma_3}.
\end{multline}
\item If $z_\pm\in(b_k-\delta,b_k)\cup(b_k,b_k+\delta)$ and $\delta>0$ is sufficiently small, then we have the uniform approximation
\begin{align*}
    \Gamma(z_\pm;2e^{-\varkappa})=e^{\varkappa \gg(\infty)\sigma_3}\left(\1+\BigO{\varkappa^{-1}}\right)\Psi(z_\pm;\varkappa)\begin{cases}
     \sigma_2\Psi_4^{-1}(\zeta_{k\pm};\varkappa)\Gamma_4(\zeta_{k\pm};2e^{-\varkappa})\sigma_2, &(b_k-\delta,b_k)\subset J, \\
     \Psi_4^{-1}(\zeta_{k\pm};\varkappa)\Gamma_4(\zeta_{k\pm};2e^{-\varkappa}), &(b_k-\delta,b_k)\subset E.
    \end{cases}
\end{align*}

\end{enumerate}
\end{theorem}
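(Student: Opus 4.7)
The plan is to unwind the chain of transformations $\Gamma\mapsto Y\mapsto Z\mapsto T$ in reverse, using the small-norm conclusion \eqref{E uni err} as the starting point, and to match the resulting boundary-value expressions against the formulas in statements (1)--(3) region by region.

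First, I would rewrite \eqref{err def} as $Z(z;\varkappa)=T(z;\varkappa)\wh Z(z;\varkappa)$, and invert \eqref{y-def} to get $\Gamma(z;2e^{-\varkappa})=e^{\varkappa\gg(\infty)\sigma_3}\,Y(z;\varkappa)\,e^{-\varkappa\gg(z)\sigma_3}$, so that in any region it suffices to express $Y$ in terms of $Z$ (via the inverse of \eqref{mat-Z}), then in terms of $T\wh Z$. For (1), $z$ lies outside the lenses and outside all discs in $\mathcal D$, so $Z=Y$ and $\wh Z=\Psi$; thus $Y=T\Psi$ and inserting \eqref{E uni err} yields the claim directly. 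For (2), the boundary values $z_\pm$ are approached from inside the lenses $\mathcal L_{E,J}^{(\pm)}$ and outside $\mathcal D$, so $\wh Z=\Psi$ and $Y$ is obtained from $Z$ by multiplying by the inverse of the triangular factor in \eqref{mat-Z}; combining this factor with the $\chi_E,\chi_J$ indicators reproduces exactly the triangular matrix with $\mp i e^{-\varkappa(2\gg(z_\pm)+1)}\chi_J$ in the $(1,2)$ entry and $\pm i e^{\varkappa(2\gg(z_\pm)-1)}\chi_E$ in the $(2,1)$ entry, after which one again inserts $T=\1+\mathcal O(\varkappa^{-1})$.

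For (3), the boundary values $z_\pm$ with $z\in(b_k-\delta,b_k)\cup(b_k,b_k+\delta)$ lie in the regions $II_{k\pm}$ (resp. $V_{k\pm}$) of Figure \ref{fig: para}, which are inside $\mathcal D_{b_k}$ but outside the lenses; so $Z=Y$ and $\wh Z=\mathcal P_k$ given by the second lines of \eqref{ParamIeLeft} or \eqref{ParamIeRight}. In the case $(b_k-\delta,b_k)\subset E$, one has $\mathcal P_k=\Psi\,\Psi_4^{-1}\Gamma_4\, e^{\varkappa\gg_4(\zeta_k)\sigma_3}$, and \eqref{g_4=gg} gives $\gg_4(\zeta_k)=\gg(z)$, so the two $e^{\pm\varkappa\gg\sigma_3}$ factors cancel and
\[
\Gamma(z_\pm;2e^{-\varkappa})=e^{\varkappa\gg(\infty)\sigma_3}T(z_\pm;\varkappa)\,\Psi(z_\pm;\varkappa)\Psi_4^{-1}(\zeta_{k\pm};\varkappa)\Gamma_4(\zeta_{k\pm};2e^{-\varkappa}),
\]
as desired. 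In the case $(b_k-\delta,b_k)\subset J$, \eqref{g_4=gg} gives $\gg_4(\zeta_k)=-\gg(z)$, and the key algebraic simplification is the identity $\sigma_2 e^{a\sigma_3}\sigma_2=e^{-a\sigma_3}$, which after applying to $\sigma_2\,e^{\varkappa\gg_4(\zeta_k)\sigma_3}\sigma_2$ produces $e^{-\varkappa\gg_4(\zeta_k)\sigma_3}=e^{\varkappa\gg(z)\sigma_3}$ and cancels against $e^{-\varkappa\gg(z_\pm)\sigma_3}$, leaving the stated formula.

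The main obstacle is bookkeeping rather than analysis: one must track the four triangular lens factors of \eqref{mat-Z}, the $\pm$ sign conventions near the real axis, and the $\sigma_2$-conjugation in \eqref{ParamIeRight}, and verify that the various $\gg$/$\gg_4$/$\tgg$ exponentials combine so that no logarithmic singularity at $b_k$ is left uncancelled. The uniformity of the $\mathcal O(\varkappa^{-1})$ error on compact subsets follows from \eqref{E uni err} together with Lemma \ref{lem-param} and the uniform boundedness of $\Psi(z;\varkappa)^{\pm 1}$ on the relevant compact sets for $\varkappa\in\mathcal S_\epsilon$ (Remark \ref{rem: S_e}); I would invoke these without redoing the estimates.
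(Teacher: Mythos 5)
Your proposal reproduces the paper's argument: you reverse the chain $\Gamma\mapsto Y\mapsto Z$, substitute $Z=T\wh Z$, and read off the stated formulas region by region from the piecewise definitions of $\wh Z$ and of the lens factors in \eqref{mat-Z}, inserting $T=\1+\BigO{\varkappa^{-1}}$ at the end via \eqref{E uni err}. Parts (1) and (2) are handled exactly as in the paper, and in part (3) your final computations, including the $\sigma_2$-conjugation identity $\sigma_2 e^{a\sigma_3}\sigma_2=e^{-a\sigma_3}$ and the cancellation between the $e^{\pm\varkappa\gg\sigma_3}$ and $e^{\pm\varkappa\gg_4(\zeta_k)\sigma_3}$ factors enforced by \eqref{g_4=gg}, land on the right formulas.

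One geometric slip in (3) is worth flagging, even though it does not change the answer. For $z\in(b_k-\delta,b_k)\cup(b_k,b_k+\delta)$ the boundary values $z_\pm$ are taken non-tangentially from the upper/lower half-plane, hence from \emph{inside} the lenses, i.e.\ from the sectors $I_{k\pm}$ or $III_{k\pm}$ (resp.\ $IV_{k\pm}$ or $VI_{k\pm}$) adjacent to the real axis in Figure~\ref{fig: para}; the sectors $II_{k\pm}$ (resp.\ $V_{k\pm}$) lie between the two lens boundaries and touch the real axis only at $b_k$, so one cannot approach a point of $(b_k-\delta,b_k)\cup(b_k,b_k+\delta)$ through them. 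Consequently $Z\neq Y$ at $z_\pm$ and $\wh Z$ should be the first or third line of \eqref{ParamIeLeft} (resp.\ \eqref{ParamIeRight}), not the second. The reason your computation still gives the correct result is that the triangular factor built into $\mathcal P_k$ in $I_{k\pm}/III_{k\pm}$ (resp.\ $IV_{k\pm}/VI_{k\pm}$), after using \eqref{g_4=gg}, is exactly the inverse of the corresponding lens factor appearing when one inverts \eqref{mat-Z}; this is precisely what makes $T=Z\wh Z^{-1}$ jump-free across the lens boundaries inside $\mathcal D_{b_k}$, so the two triangular factors cancel and all three regional expressions collapse to the common form $e^{\varkappa\gg(\infty)\sigma_3}T\,\Psi\,\Psi_4^{-1}\Gamma_4$ (or its $\sigma_2$-conjugate). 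A careful write-up should either use the correct sectors and exhibit this cancellation, or explicitly invoke the jump-free property of $T$ on the lens boundaries inside $\mathcal D_{b_k}$ to justify evaluating through the $II_{k\pm}$ expression.
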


\begin{proof}
We begin by reversing the transformations of this section \eqref{y-def}, \eqref{mat-Z}, \eqref{err def} to obtain
\begin{align}
    \Gamma(z;\lambda)=e^{\varkappa\gg(\infty)\sigma_3}T(z;\varkappa)\wh{Z}(z;\varkappa)\begin{cases}
    e^{-\varkappa\gg(z)\sigma_3}, & z ~ \mbox{outside lenses}, \\
    \begin{bmatrix} 1 & \mp {i}{ e^{-\ka (2\gg(z) +1)}} \\ 0 & 1 \end{bmatrix}e^{-\varkappa\gg(z)\sigma_3}, & z\in\mathcal{L}_J^{(\pm)}, \\
    \begin{bmatrix} 1 & 0 \\ \pm {i}{ e^{ \ka (2\gg(z) -1)}} & 1 \end{bmatrix}e^{-\varkappa\gg(z)\sigma_3}, & z\in\mathcal{L}_E^{(\pm)}.
    \end{cases}
\end{align}
Suppose $z$ is in a compact subset of $\mathbb{C}\setminus(E\cup J)$.  We then construct the disks $\mathcal{D}_{a_j}$, $\mathcal{D}_{b_k}$ and lenses $\mathcal{L}_J^{(\pm)}$, $\mathcal{L}_E^{(\pm)}$ so that $z$ is outside of the disks and lenses.  Using the definition of $\wh{Z}(z;\varkappa)$ \eqref{hatZ explicit} and applying \eqref{E uni err}, we obtain the first result.  Now consider $z$ in a compact subset of $\mathring{E}\cup\mathring{J}$.  The disks $\mathcal{D}_{a_j}$, $\mathcal{D}_{b_k}$ are chosen so that $z$ lies outside the disks.  Again we turn to \eqref{hatZ explicit}, \eqref{E uni err} to obtain the second result.  Lastly, let $\delta>0$ be such that $(b_k,b_k+\delta)\subset\mathcal{D}_{b_k}$ for $k=1,\ldots,n$.  After applying \eqref{hatZ explicit}, \eqref{ParamIeLeft}, \eqref{E uni err}, we obtain the final result.
\end{proof}

We now consider the special scenario where $g=0$ and there are $n\geq1$ double points, i.e. $E\cup J=[a_1,a_2]$.  In this setting, the matrix $\Psi(z;\varkappa)$ can be expressed in terms of elementary functions.  Explicitly,
\begin{align}\label{Psi simple}
    \Psi(z;\ka)= e^{\tgg(\infty;\varkappa)\s_3}\le(\frac{z-a_1}{z-a_2}\ri)^{\frac{\s_1}4} e^{-\tgg(z;\varkappa)\s_3}.
\end{align}
The following result will be useful later.

\begin{lemma}\label{lemma: gamma no pole}
Let $J\cup E=[a_1,a_2]$ with $n\geq1$ double points, $z\in\mathring{J}\cup\mathring{E}$ and $|\tilde{\varkappa}|$ be sufficiently large so that the results of Theorem \ref{thm: gamma asymptotics} apply.  Then, for any $\lambda\in(-1,1)\setminus\{0\}$ such that $|\varkappa|>|\tilde\varkappa|$, $\Gamma(z;\lambda)$ is pole-free (in $\lambda$).
\end{lemma}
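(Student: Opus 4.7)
My plan is to leverage the uniform approximation of Theorem \ref{thm: gamma asymptotics} together with the explicit form \eqref{Psi simple} of the outer parametrix to show that the full Deift--Zhou reconstruction of $\Gamma$ is a product of factors that are analytic (hence pole-free) in $\varkappa$ throughout the asymptotic regime $|\varkappa|>|\tilde\varkappa|$. Since poles of $\Gamma(z;\cdot)$ correspond, via the resolvent identity \eqref{res kernel}, to eigenvalues of $\mathscr K$, the lemma is equivalent to saying that no eigenvalue of $\mathscr K$ in $(-1,1)\setminus\{0\}$ can lie in the corresponding punctured neighborhood of $\lambda=0$.

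First, in the single-interval setting ($g=0$, $n\geq 1$), Remark \ref{rem-poles-in-kappa} tells us that the model problem admits the $\varkappa$-independent solution $\tilde\Psi(z)=((z-a_1)/(z-a_2))^{\sigma_1/4}$, so $\mathcal S_\epsilon$ is simply the full strip $\{|\Im\varkappa|\leq\pi,\ \Re\varkappa\geq\ln 2\}$. Moreover, because for $g=0$ the function $\tilde g(z;\varkappa)$ depends on $\varkappa$ only through $\mathrm{sgn}(\Im\varkappa)$ (see \eqref{tgg}), formula \eqref{Psi simple} produces a $\Psi(z_\pm;\varkappa)$ that is piecewise analytic, and in particular pole-free, in $\varkappa$ on each side of $\mathbb R$.

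Next, I would appeal to the small-norm analysis summarized in \eqref{T jumps asymp}: the jump matrices of $T$ differ from the identity by $\BigO{\varkappa^{-1}}$ on $\partial\mathcal D$ and by $\BigO{e^{-c\varkappa}}$ on the lens boundaries, uniformly in $\varkappa\in\mathcal S_\epsilon$. For $|\varkappa|>|\tilde\varkappa|$ the associated singular integral equation is solved by a convergent Neumann series whose terms depend analytically on the jump data. Hence $T(z;\varkappa)$ is analytic, and in particular pole-free, in $\varkappa$ in this range. Unwinding the chain $\Gamma\to Y\to Z\to T$ via Theorem \ref{thm: gamma asymptotics}(2) then exhibits $\Gamma(z_\pm;2e^{-\varkappa})$ as a product of $\varkappa$-analytic factors: the explicit exponentials $e^{\pm\varkappa\gg(\cdot)\sigma_3}$, the explicit outer parametrix $\Psi$, and the analytic matrix $T$. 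Using the symmetry $\Gamma(z;\lambda)=\sigma_3\Gamma(z;-\lambda)\sigma_3$ from Remark \ref{rem-sym} to extend from $\lambda\in(0,1)$ to $\lambda\in(-1,0)$, the claim follows.

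The main obstacle I expect is pinning down the $\varkappa$-analyticity (not merely boundedness) of $T$ through the 4-point building block $\Gamma_4$ that enters the local parametrices $\mathcal P_k$ via \eqref{ParamIeLeft}, \eqref{ParamIeRight}. One must verify, using the explicit hypergeometric representation of $\Gamma_4$ from \cite{BBKT19}, that whatever discrete set of $\varkappa$-poles $\Gamma_4$ may have lies outside the regime $|\varkappa|>|\tilde\varkappa|$. Once this has been established, the jump data of $T$ depend analytically on $\varkappa$, the Neumann series produces an analytic $T$, and pole-freeness of $\Gamma$ is immediate.
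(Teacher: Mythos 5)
Your proposal is correct in outline, but it takes a genuinely different — and more laborious — route than the paper. You aim to establish that $\Gamma(z;\cdot)$ is analytic in $\varkappa$ by showing that every factor in the Deift--Zhou reconstruction (the $g$-function exponentials, $\Psi$, $\tgg$, the local parametrices, and above all the small-norm matrix $T$) is individually analytic. That requires analyticity, not just uniform boundedness, of the jump data for $T$, which in turn pushes the problem down to the analyticity of $\Gamma_4$ and of the Neumann series — exactly the obstacle you flag at the end. The paper instead uses a short contradiction: if $\lambda_0$ with $|\varkappa_0|>|\tilde\varkappa|$ were a pole, then by the Laurent expansion from \cite[eq.~5.1, Prop.~5.2]{BKT20} the matrix $\Gamma(z;\lambda)$ blows up as $\lambda\to\lambda_0$; but Theorem~\ref{thm: gamma asymptotics} (which is applicable precisely because $\mathcal S_\epsilon$ is the full strip in the single-interval case, as you correctly observe) represents $\Gamma(z;\lambda)$ as a product of $\gg$, $\tgg$, $\Psi$ and a uniformly bounded $\1+\BigO{\varkappa^{-1}}$ error, hence $\Gamma(z;\lambda)$ stays bounded near $\lambda_0$, a contradiction. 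The negative-$\lambda$ half is handled, as you also do, by the symmetry $\Gamma(z;\lambda)=\sigma_3\Gamma(z;-\lambda)\sigma_3$. What the paper's argument buys is that it only needs the uniform \emph{boundedness} already supplied by the small-norm estimate \eqref{E uni err}; it does not need to promote this to $\varkappa$-analyticity of $T$, and therefore does not need to separately dispose of possible $\varkappa$-poles of $\Gamma_4$ inside the local parametrices. Your approach can be completed (and the $\Gamma_4$ pole-freeness is indeed available from the explicit hypergeometric solution in \cite{BBKT19}), but the extra machinery is unnecessary for this lemma.
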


\begin{proof}
Assume there exists a $\frac{\lambda_0}{2}=e^{-\varkappa_0}$ ($\lambda_0>0$) with $\varkappa_0>|\tilde{\varkappa}|$ such that $\Gamma(z;\lambda_0)$ has a pole.  Then, according to \cite[eq. 5.1]{BKT20}, $\Gamma(z;\lambda)$ has the Laurant expansion around $\lambda=\lambda_0$ given by
\begin{align}\label{gamma pole}
    \Gamma(z;\lambda)=\frac{\Gamma^{0}(z)}{\lambda-\lambda_0}+\Gamma^1(z)+\BigO{\lambda-\lambda_0},
\end{align}
see \cite[Prop. 5.2]{BKT20} for more details on $\Gamma^0(z)$.  According to \eqref{gamma pole}, $\Gamma(z;\lambda)$ becomes unbounded as $\lambda\to\lambda_0$.  Since $\varkappa_0>|\tilde{\varkappa}|$, Theorem \ref{thm: gamma asymptotics} applies.  But after noticing that $\gg(z), \Psi(z;\varkappa), \tgg(z;\varkappa)$ are bounded (see \eqref{gg}, \eqref{tgg}, \eqref{Psi simple}), Theorem \ref{thm: gamma asymptotics} implies that $\Gamma(z;\lambda)$ is bounded as $\lambda\to\lambda_0$, a contradiction.  Thus, $\Gamma(z;\lambda)$ is pole-free for $\varkappa>|\tilde{\varkappa}|$ when $\lambda>0$.  For $\lambda<0$, we use the symmetry $\Gamma(z;\lambda)=\sigma_3\Gamma(z;-\lambda)\sigma_3$, see Remark \ref{rem-sym}, to obtain the result.
\end{proof}

\section{Asymptotics of the jump of the kernel of $\mathscr{R}$}\label{sec:jump}

We consider the case of just one interval $[a_1,a_2]=J\cup E$ with $n$ double points $b_1<b_2<\dots<b_n$ in $(a_1,a_2)$ and $[a_1,b_1]\subset E$.  Explicitly,
\begin{align*}
    E&=\begin{cases}
     [a_1,b_1]\cup[b_2,b_3]\cup\cdots\cup[b_n,a_2], &\text{$n$ even}, \\
     [a_1,b_1]\cup[b_2,b_3]\cup\cdots\cup[b_{n-1},b_n], &\text{$n$ odd},
    \end{cases}
\end{align*}
and $J=[a_1,a_2]\setminus(E\setminus\{b_1,\ldots,b_n\})$.  The objective of this Section is to obtain the large $\varkappa$ asymptotics of the kernels $Q_j$, see Theorem \ref{thm:main} items (3) and (4).  This goal is achieved in Proposition \ref{Eprime simple}.  The first step in proving this Proposition is to obtain the leading order asymptotics of the jump of the kernel of the resolvent, which was computed in terms of the matrix $\Gamma(z;\lambda)$ in Proposition \ref{prop: DeltaAbsLambdaRJRE ids}.  Recall first that the leading order asymptotics of $\Gamma(z;\lambda)$ was obtained in Theorem \ref{thm: gamma asymptotics} and second that since we are using Proposition \ref{prop: DeltaAbsLambdaRJRE ids} we only need to consider $\Im\varkappa\geq0$ (equivalently $\Im\lambda\leq0$).  Thus we assume throughout this Section that $\Im\varkappa\geq0$ and $\Im\varkappa=0$ is to be understood as the continuation onto the real-axis from the upper-half plane.  An important piece of the leading order asymptotics of $\Gamma(z;\lambda)$ in Theorem \ref{thm: gamma asymptotics} is the solution of the model RHP \ref{modelRHP} $\Psi(z;\varkappa)$.  Due to the simplicity of our intervals $J, E$, we can express $\Psi(z;\varkappa)$ in terms of elementary functions.  We begin by verifying that the solution of RHP \ref{modelRHPtilde} is $\tilde \Psi(z;\ka)=\tilde{\Psi}(z)=\le(\frac{z-a_1}{z-a_2}\ri)^{\frac{\s_1}{4}}$.  On the other hand, similarly to Section \ref{sect-modsol2}, we can write
\be\label{Psi_S}
    \Psi(z;\ka)=\Psi(z)= e^{\tgg(\infty)\s_3}\le(\frac{z-a_1}{z-a_2}\ri)^{\frac{\s_1}4} e^{-\tgg(z)\s_3}=S(z)\left(\frac{z-a_1}{(z-a_2)^{(-1)^n}}\right)^{\frac{\sigma_1}{4}}\tilde{B}_n(z)^{\sigma_1},
\ee 
where
\begin{align}\label{tilde Bn}
    \tilde{B}_n:\begin{cases}
     \mathbb{C}\setminus \left((-\infty,a_1)\cup E\right)\to\mathbb{C}, & n ~ \text{odd}, \\
     \mathbb{C}\setminus J\to\mathbb{C}, & n ~ \text{even},
    \end{cases} \hspace{0.75cm} \tilde{B}_n(z):=\prod_{j=1}^n\left(z-b_j\right)^\frac{(-1)^j}{2},
\end{align}
the branches for each root in $\tilde{B}_n(z)$ and $\left(\frac{z-a_1}{(z-a_2)^{(-1)^n}}\right)^{\frac{1}{4}}$ are $(-\infty,c_j)$ with positive orientation for each $c_j\in\{a_1,a_2,b_1,\ldots,b_n\}$ and
\be\label{S(z)}                             
S(z)=\1+\sum_{j=1}^n \frac{B_j}{z-b_j}= \1  +\sum_{j=1}^n (M_j+iN_j)\frac{\1 +(-1)^j\s_1}{z-b_j}.
\ee                                        
Here 
$M_j, N_j$ are real diagonal matrices and the latter equation follows from the requirement that $\Psi(z)=\BigO{(z-b_j)^{-\hf}}$ near any $b_j$.  We can now see that $\Psi(z)$ can be written as the sum of a Schwarz symmetric function and an anti-Schwarz symmetric function as
\begin{align}\label{Psi Sch + aSch}
    \Psi(z)=\wh{\Psi}(z)+i\breve{\Psi}(z),
\end{align}
where $\wh{\Psi}(z)$, $\breve{\Psi}(z)$ are the Schwarz symmetric functions
\begin{align}
    \wh{\Psi}(z)&:=\left(\1+\sum_{j=1}^n\frac{M_j}{z-b_j}(\1+(-1)^j\sigma_1)\right)\left(\frac{z-a_1}{(z-a_2)^{(-1)^n}}\right)^{\frac{\sigma_1}{4}}\tilde{B}_n(z)^{\sigma_1}, \label{wh Psi} \\
    \breve{\Psi}(z)&:=\sum_{j=1}^n\frac{N_j}{z-b_j}(\1+(-1)^j\sigma_1)\left(\frac{z-a_1}{(z-a_2)^{(-1)^n}}\right)^{\frac{\sigma_1}{4}}\tilde{B}_n(z)^{\sigma_1}. \label{breve Psi}
\end{align}
We are now prepared to extract the leading order asymptotics of $\Delta_{\lambda}R_{EE}$, $\Delta_{\lambda}R_{EJ}$, $\Delta_{\lambda}R_{JE}$, and $\Delta_{\lambda}R_{JJ}$.

\begin{proposition}\label{prop: Delta R asymp}
Let $\lambda\in [-1,1]\setminus\Sigma$, $M_j$ denote the $j$th column of the matrix $M$, and, with abuse of notation, $\varkappa=\varkappa(|\lambda|)$.  As $\lambda\to0$,
\begin{enumerate}
    \item if $x,y$ are in a compact subset of $\mathring{E}$, we have the uniform approximation 
    \begin{align}\label{Delta REE asymp}
        \Delta_{\lambda}R_{EE}(x,y;\lambda)=\tilde{R}_{EE}(x,y;\lambda)+\BigO{\varkappa^{-1}},
    \end{align}
    where
    \begin{multline}\label{tREE}
        \tilde{R}_{EE}(x,y;\lambda)= \\
        \frac{\begin{vmatrix} \Re[e^{-i\varkappa\Im\gg(y)}\wh{\Psi}_{1}(y)] & \Re[e^{-i\varkappa\Im\gg(x)}\breve{\Psi}_{1}(x)] \end{vmatrix}_+-\begin{vmatrix} \Re[e^{-i\varkappa\Im\gg(x)}\wh{\Psi}_{1}(x)] & \Re[e^{-i\varkappa\Im\gg(y)}\breve{\Psi}_{1}(y)] \end{vmatrix}_+}{\mathrm{sgn}(\lambda)\frac{i\pi}{4}(x-y)},
    \end{multline}
    and we understand that $x,y$ are taken on the upper shore of $\mathring{E}$.
    
    \item if $x,y$ are in a compact subset of $\mathring{E}, \mathring{J}$, respectively, we have the uniform approximation 
    \begin{align}\label{Delta REJ asymp}
        \Delta_{\lambda}R_{EJ}(x,y;\lambda)=\tilde{R}_{EJ}(x,y;\lambda)+\BigO{\varkappa^{-1}},
    \end{align}
    where
    \begin{multline}\label{tREJ}
        \tilde{R}_{EJ}(x,y;\lambda)= \\
        \frac{\begin{vmatrix} \Re[e^{i\varkappa\Im\gg(y)}\wh{\Psi}_{2}(y)] & \Re[e^{-i\varkappa\Im\gg(x)}\breve{\Psi}_{1}(x)] \end{vmatrix}_+-\begin{vmatrix} \Re[e^{-i\varkappa\Im\gg(x)}\wh{\Psi}_{1}(x)] & \Re[e^{i\varkappa\Im\gg(y)}\breve{\Psi}_{2}(y)] \end{vmatrix}_+}{\frac{i\pi}{4}(x-y)},
    \end{multline}
    and we understand that $x,y$ are taken on the upper shore of $\mathring{E}, \mathring{J}$, respectively.
    
    \item if $x,y$ are in a compact subset of $\mathring{J}, \mathring{E}$, respectively, we have the uniform approximation 
    \begin{align}\label{Delta RJE asymp}
        \Delta_{\lambda}R_{JE}(x,y;\lambda)=\tilde{R}_{JE}(x,y;\lambda)+\BigO{\varkappa^{-1}},
    \end{align}
    where
    \begin{multline}\label{tRJE}
        \tilde{R}_{JE}(x,y;\lambda)= \\
        \frac{\begin{vmatrix} \Re[e^{-i\varkappa\Im\gg(y)}\wh{\Psi}_{1}(y)] & \Re[e^{i\varkappa\Im\gg(x)}\breve{\Psi}_{2}(x)] \end{vmatrix}_+-\begin{vmatrix} \Re[e^{i\varkappa\Im\gg(x)}\wh{\Psi}_{2}(x)] & \Re[e^{-i\varkappa\Im\gg(y)}\breve{\Psi}_{1}(y)] \end{vmatrix}_+}{\frac{i\pi}{4}(x-y)},
    \end{multline}
    and we understand that $x,y$ are taken on the upper shore of $\mathring{J}, \mathring{E}$, respectively.
    
    \item if $x,y$ are in a compact subset of $\mathring{J}$, we have the uniform approximation
    \begin{align}\label{Delta RJJ asymp}
        \Delta_{\lambda}R_{JJ}(x,y;\lambda)=\tilde{R}_{JJ}(x,y;\lambda)+\BigO{\varkappa^{-1}},
    \end{align}
    where
    \begin{multline}\label{tRJJ}
        \tilde{R}_{JJ}(x,y;\lambda)= \\
        \frac{\begin{vmatrix} \Re[e^{i\varkappa\Im\gg(y)}\wh{\Psi}_{2}(y)] & \Re[e^{i\varkappa\Im\gg(x)}\breve{\Psi}_{2}(x)] \end{vmatrix}_+-\begin{vmatrix} \Re[e^{i\varkappa\Im\gg(x)}\wh{\Psi}_{2}(x)] & \Re[e^{i\varkappa\Im\gg(y)}\breve{\Psi}_{2}(y)] \end{vmatrix}_+}{\mathrm{sgn}(\lambda)\frac{i\pi}{4}(x-y)},
    \end{multline}
    and we understand that $x,y$ are taken on the upper shore of $\mathring{J}$.
\end{enumerate}
\end{proposition}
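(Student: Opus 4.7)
The proof combines Proposition~\ref{prop: DeltaAbsLambdaRJRE ids}, the algebraic identity \eqref{jump-num}, the leading-order asymptotics from Theorem~\ref{thm: gamma asymptotics}(2), and the Schwarz decomposition $\Psi = \hat\Psi + i\breve\Psi$ from \eqref{Psi Sch + aSch}--\eqref{breve Psi}. Proposition~\ref{prop: DeltaAbsLambdaRJRE ids} together with \eqref{jump-num} expresses each $\Delta_\lambda R_{ij}(x,y;\lambda)$ as the ratio of
\[
\det[\Re\Gamma_a(y;|\lambda|),\,\Im\Gamma_b(x;|\lambda|_-)] - \det[\Re\Gamma_b(x;|\lambda|),\,\Im\Gamma_a(y;|\lambda|_-)]
\]
to $-\pi\lambda^{\#}(x-y)/(2i)$, with $\lambda^{\#}\in\{\lambda,|\lambda|\}$ and $(a,b)$ running over the four column combinations. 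Applying Theorem~\ref{thm: gamma asymptotics}(2) with $\pm=+$ and using $\Re g(x_+) = \pm\tfrac12$ on $\mathring{E},\mathring{J}$ (Proposition~\ref{prop: g-function}), the scalar factors collapse and one obtains, uniformly on compact subsets,
\[
\Gamma_1(x;|\lambda|_-) = e^{\varkappa g(\infty)\sigma_3}e^{-\varkappa/2}V_E(x)\bigl(1 + O(\varkappa^{-1})\bigr),\quad V_E(x) := e^{-i\varkappa\theta(x)}\Psi_1(x_+) + ie^{i\varkappa\theta(x)}\Psi_2(x_+),
\]
with $\theta(x) := \Im g(x_+)$ on $\mathring{E}$; an analogous formula yields $\Gamma_2(y;|\lambda|_-)$ on $\mathring{J}$ in terms of $V_J(y) := e^{i\varkappa\theta_J(y)}\Psi_2(y_+) - ie^{-i\varkappa\theta_J(y)}\Psi_1(y_+)$ with $\theta_J(y) := \Im g(y_+)$.

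The decisive and most delicate step is to reduce $\Re V_E$ and $\Im V_E$ (together with the $V_J$ analogs) to short expressions involving only one column of $\Psi$. Inserting $\Psi_j = \hat\Psi_j + i\breve\Psi_j$ and expanding via $\Re(iw) = -\Im w$ produces four terms in each of $\Re V_E, \Im V_E$. To collapse them I plan to derive the four linear identities
\[
\Re\hat\Psi_1(x_+) + \Im\hat\Psi_2(x_+) = 0,\ \ \Re\hat\Psi_2(x_+) + \Im\hat\Psi_1(x_+) = 0,\ \ \Re\breve\Psi_1(x_+) + \Im\breve\Psi_2(x_+) = 0,\ \ \Re\breve\Psi_2(x_+) + \Im\breve\Psi_1(x_+) = 0
\]
for $x \in \mathring{E}$ (with the signs of the right-hand sides flipped on $\mathring{J}$). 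These follow by combining the jump $\Psi(x_+) = \Psi(x_-)(-i\sigma_1)$ on $\mathring{E}$ with the Schwarz symmetries $\hat\Psi(x_-) = \overline{\hat\Psi(x_+)}$ and $\breve\Psi(x_-) = \overline{\breve\Psi(x_+)}$, and matching real and imaginary parts of the resulting $2\times 2$ matrix equation. Substituting them into the expansions causes all $\breve\Psi$-contributions to $\Re V_E$ and all $\hat\Psi$-contributions to $\Im V_E$ to annihilate pairwise, leaving the compact formulas
\[
\Re V_E(x) = 2\Re\bigl[e^{-i\varkappa\theta(x)}\hat\Psi_1(x_+)\bigr],\qquad \Im V_E(x) = 2\Re\bigl[e^{-i\varkappa\theta(x)}\breve\Psi_1(x_+)\bigr],
\]
and the counterparts on $\mathring{J}$ with $\Psi_1$ replaced by $\Psi_2$ and $-\theta$ replaced by $+\theta_J$.

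The final step is bookkeeping. Substituting these compact forms into the first-paragraph reduction, the identity $\det(e^{\varkappa g(\infty)\sigma_3}) = 1$ drops the matrix prefactor from each determinant, and the remaining scalars combine into a common factor of $4e^{-\varkappa}$. Using $e^{-\varkappa} = |\lambda|/2$, the overall coefficient $-2i/(\pi\lambda^{\#}(x-y))$ becomes exactly $\bigl[\mathrm{sgn}(\lambda)\tfrac{i\pi}{4}(x-y)\bigr]^{-1}$ for the diagonal cases $R_{EE}, R_{JJ}$ and $\bigl[\tfrac{i\pi}{4}(x-y)\bigr]^{-1}$ for the off-diagonal cases $R_{EJ}, R_{JE}$, matching \eqref{tREE}--\eqref{tRJJ}. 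The multiplicative $(1 + O(\varkappa^{-1}))$ remainder from Theorem~\ref{thm: gamma asymptotics} propagates unchanged through the determinants and the $O(1)$ coefficient, producing the claimed uniform $O(\varkappa^{-1})$ error on compact subsets.
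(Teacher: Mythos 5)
Your proposal is correct and follows the same overall skeleton as the paper's proof: Proposition~\ref{prop: DeltaAbsLambdaRJRE ids} together with \eqref{jump-num} exposes the determinants, Theorem~\ref{thm: gamma asymptotics}(2) gives the leading-order columns, the Schwarz decomposition $\Psi=\wh\Psi+i\breve\Psi$ does the work, and the scalar bookkeeping with $\Re\gg(z_\pm)=\pm\hf$ and $e^{-\varkappa}=|\lambda|/2$ closes it out. The one real departure is in the middle step. You evaluate the parametrix on the $+$ shore only, which forces you to derive and deploy the four auxiliary identities $\Re\wh\Psi_1+\Im\wh\Psi_2=0$, $\Re\wh\Psi_2+\Im\wh\Psi_1=0$, $\Re\breve\Psi_1+\Im\breve\Psi_2=0$, $\Re\breve\Psi_2+\Im\breve\Psi_1=0$ on $\mathring E$ (and the versions with the sign of the second term flipped on $\mathring J$) in order to collapse $\Re V_E,\Im V_E$; I checked these are exactly the scalar content of $\Psi_+=\Psi_-(\mp i\sigma_1)$ combined with Schwarz symmetry of $\wh\Psi,\breve\Psi$, and the pairwise cancellations land you on the same compact formulas. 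The paper sidesteps this: Theorem~\ref{thm: gamma asymptotics}(2) naturally produces the two-shore combination $e^{-\varkappa\gg(z_+)}\Psi_{1+}+e^{-\varkappa\gg(z_-)}\Psi_{1-}$, and since $\gg,\wh\Psi,\breve\Psi$ are all Schwarz symmetric one has $f_++f_-=2\Re f_+$ termwise, giving \eqref{ReIm parts} in one line. Your route is longer but entirely explicit, and it makes manifest why the $\breve\Psi$ (resp.\ $\wh\Psi$) contributions drop from $\Re V_E$ (resp.\ $\Im V_E$). One notational point: the multiplicative error is the $2\times 2$ matrix $\1+\BigO{\varkappa^{-1}}$ placed between $e^{\varkappa\gg(\infty)\sigma_3}$ and $\Psi$, not a scalar factor on the right of $V_E$; since $e^{\varkappa\gg(\infty)\sigma_3}$ is real and the remaining factors are $O(1)$ on compacts, the error still propagates to the stated uniform $\BigO{\varkappa^{-1}}$ after taking $\Re$, $\Im$ and dividing by $|\lambda|$, so the conclusion is unaffected.
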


\begin{proof}
We will prove \eqref{Delta REJ asymp}, \eqref{tREJ} since the proof of the other identities similar.  Using \eqref{jump-num} and Proposition \ref{prop: DeltaAbsLambdaRJRE ids}, we have
\begin{align}\label{in proof: REJ}
    \Delta_{\lambda}R_{EJ}(x,y;\lambda)=\frac{2i\left(\det\left[\Re\Gamma_2(y;|\lambda|), \Im\Gamma_1(x;|\lambda|_-)\right]-\det\left[\Re\Gamma_1(x;|\lambda|), \Im\Gamma_2(y;|\lambda|_-)\right]\right)}{-\pi|\lambda|(x-y)}.
\end{align}
From Theorem \ref{thm: gamma asymptotics} and the jump properties of $\gg(z)$, $\Psi(z)$, we have (taking $\varkappa=\varkappa(|\lambda|)$)
\begin{align}\label{G2G1 asymp}
\begin{split}
    \Gamma_2(z;|\lambda|_-)&=e^{\varkappa\gg(\infty)\sigma_3}\left(\1+\BigO{\varkappa^{-1}}\right)\begin{bmatrix} e^{\varkappa\gg(z_+)}\Psi_{2+}(z)+e^{\varkappa\gg(z_-)}\Psi_{2-}(z) \end{bmatrix}, \qquad z\in\mathring{J}, \\
    \Gamma_1(z;|\lambda|_-)&=e^{\varkappa\gg(\infty)\sigma_3}\left(\1+\BigO{\varkappa^{-1}}\right)\begin{bmatrix} e^{-\varkappa\gg(z_+)}\Psi_{1+}(z) + e^{-\varkappa\gg(z_-)}\Psi_{1-}(z) \end{bmatrix}, \qquad z\in\mathring{E}.
\end{split}
\end{align}
Recall that if $f(z)$ is a Schwarz symmetric function with a jump on an interval $I\subset\mathbb{R}$, then $f_+(z)+f_-(z)=2\Re[f_+(z)]=2\Re[f_-(z)]$ for $z\in I$.  Using \eqref{Psi Sch + aSch}, we have
\begin{align}\label{ReIm parts}
    e^{(-1)^j\varkappa\gg(z_+)}\Psi_{j+}(z)+e^{(-1)^j\varkappa\gg(z_-)}\Psi_{j-}(z)=2\Re\left[e^{(-1)^j\varkappa\gg(z_+)}\wh{\Psi}_{j_+}(z)\right]+2i\Re\left[e^{(-1)^j\varkappa\gg(z_+)}\breve{\Psi}_{j+}(z)\right].
\end{align}
The results now follow by using \eqref{G2G1 asymp}, \eqref{ReIm parts} in \eqref{in proof: REJ} and using the fact that $\Re[\gg(z_\pm)]=\frac{1}{2}$ for $z\in\mathring{E}$ and $\Re[\gg(z_\pm)]=-\frac{1}{2}$ for $z\in \mathring{J}$, see Proposition \ref{prop: g-function}.
\end{proof}

To simplify the leading order kernels obtained in Proposition \ref{prop: Delta R asymp}, we need to compute the matrices $M_j, N_j$ appearing in \eqref{S(z)}.  This is achieved by understanding the behavior of $\tgg(z)$ as $z\to b_j$, $j=1,\ldots,n$ and as $z\to\infty$.  Recall from \eqref{tgg} that $\tgg(z)$ (for $\Im\varkappa\geq0$) has the integral representation
\begin{align*}
    \tgg(z)=\frac{R(z)}{2\pi i}\int_{J}\frac{ -i\pi}{(\z-z)R_+(\z)}d\z.
\end{align*}
We can evaluate this integral explicitly.  Define the function $s:\mathbb{C}\setminus(-\infty,a_2]\times(a_1,a_2)\to\mathbb{C}$ as
\begin{align}\label{szb}
    s(z,b):=i\sqrt{(z-a_1)(a_2-b)}+\sqrt{(z-a_2)(b-a_1)},
\end{align}
where $\arg(z-a_j)=0$ for $z-a_j>0$.  For $z\in(-\infty,a_2)$,
\begin{align*}
    s(z_+,b)=\begin{cases}
    \frac{(b-z)(a_2-a_1)}{s(z_-,b)}, & z\in(a_1,a_2), \\
    -s(z_-,b), & z\in(-\infty,a_1).
    \end{cases}
\end{align*}
We also verify that $\tilde{B}_n(z)$, defined in \eqref{tilde Bn}, has the jumps
\begin{align*}
    \tilde{B}_{n}(z_+)=-\tilde{B}_{n}(z_-), \qquad z\in\begin{cases}
     (-\infty,a_1)\cup \mathring{E}, & n ~ \text{odd}, \\
     \mathring{J}, & n ~ \text{even}.
    \end{cases}
\end{align*}
From the jump conditions of $s(z,b)$ and $\tilde{B}_n(z)$, we have the identity
\begin{align}\label{tgg explicit}
    \tilde{\gg}(z)=\ln\left((a_2-a_1)^{\frac{(-1)^n-1}{4}}\tilde{B}_n(z)\prod_{k=1}^{n} s(z,b_k)^{(-1)^{k+1}}\right),
\end{align}
where the logarithm $\ln(\cdot)$ has the branch $(-\infty,0)$ with positive orientation, because the right hand side of \eqref{tgg explicit} solves the RHP for $\tgg(z)$, see \eqref{tggJumps1} and the surrounding text.  Alternatively, \eqref{tgg explicit} can be obtained by applying \cite[eq. 2.266]{GRtable} to \eqref{tgg}.  It is convenient to introduce the angles $\nu_k$, $k=1,\ldots,n$, defined as the acute angle adjacent to the side $\sqrt{b_k-a_1}$ in the right triangle with hypotenuse $\sqrt{a_2-a_1}$ and sides $\sqrt{b_k-a_1}, \sqrt{a_2-b_k}$, see Figure \ref{fig: nu angle}.
\begin{figure}
\begin{center}
\begin{tikzpicture}[scale=1.00]



\draw (-2,0) -- (2,0);
\draw (-2,0) -- (2,2);
\draw (2,0) -- (2,2);

\node[below] at (0,0) {$\sqrt{b_k-a_1}$};
\node[above] at (0,1.3) {$\sqrt{a_2-a_1}$};
\node at (2.8,1) {$\sqrt{a_2-b_k}$};
\node at (-1.1,0.175) {$\nu_k$};

\draw[thick] (-1.4,0) arc (0:28:0.6);

\end{tikzpicture}
\end{center}
\vspace{0pt}
\caption{The angle $\nu_k$.} \label{fig: nu angle}
\vspace{-0.0cm}
\end{figure}
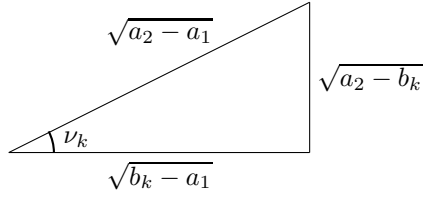
With this notation, we can see that for $z\in(a_1,a_2)$,
\begin{align*}
    s(z_\pm,b_k)=i(a_2-a_1)\sin(\nu_k\pm\nu(z)),
\end{align*}
where $\nu(z)=\sin^{-1}\left(\frac{\sqrt{a_2-z}}{\sqrt{a_2-a_1}}\right)$ and $\nu(b_k)=\nu_k$.  As $z\to b_j$ on the upper shore and interior of $J$,
\begin{align}
    &\tilde{B}_n(z)=i^{\frac{(-1)^n+1}{2}}\hat{k}_j|z-b_j|^{\frac{(-1)^j}{2}}+\BigO{(z-b_j)^{\frac{(-1)^j}{2}+1}}, \qquad \hat{k}_j:=\prod_{\substack{s=1 \\ s\neq j}}^n|b_j-b_s|^{\frac{(-1)^s}{2}}, \label{hat kj} \\
    &\prod_{k=1}^{n} s(z,b_k)^{(-1)^{k+1}}=(i(a_2-a_1))^\frac{1-(-1)^n}{2}\hat{s}_j+o(1), \qquad \hat{s}_j:=\prod_{k=1}^{n} \sin(\nu_j+\nu_k)^{(-1)^{k+1}}, \label{hat sj}
\end{align}
and thus
\begin{align}\label{etgg}
    e^{\tilde{\gg}(z)}=i(a_2-a_1)^\frac{1-(-1)^n}{4}\hat{s}_j\hat{k}_j|z-b_j|^{\frac{(-1)^j}{2}}+\BigO{(z-b_j)^{\frac{(-1)^j}{2}+1}}.
\end{align}
To understand the behavior of $\tgg(z)$ as $z\to\infty$, we first notice that as $z\to+\infty$,
\begin{align*}
    \frac{s(z,b_k)}{\sqrt{z-b_k}}=\sqrt{a_2-a_1}e^{i\nu_k}\left(1+\frac{i(a_2-a_1)\sin(2\nu_k)}{4z}+\BigO{z^{-2}}\right),
\end{align*}
for $k=1,\ldots,n$.  We see that 
\begin{align}\label{tgg inf}
    \tgg(\infty)=i\sum_{k=1}^n(-1)^{k+1}\nu_k,
\end{align}
and as $z\to\infty$,
\begin{align}
    e^{\tilde{\gg}(z)}=e^{\tilde{\gg}(\infty)}\left(1+\frac{i(a_2-a_1)}{4z}\sum_{k=1}^n(-1)^{k+1}\sin(2\nu_k)+\BigO{z^{-2}}\right).
\end{align}
We now state a preparatory Lemma.

\bl\label{lem-s1}
For any $M,N\in \C\setminus\{0\}$ and $a\in\C$ we have
\be
M^{\s_1}e^{a\s_3}N^{\s_1}=(MN)^{\s_1}\cosh a +\s_3\left(\frac NM\right)^{\s_1}\sinh a.
\ee
\el

\begin{proof}
Using the identities
\begin{align*}
    \beta^{\sigma_1}=\frac{1}{2}\begin{bmatrix} \beta+\beta^{-1} & \beta-\beta^{-1} \\ \beta-\beta^{-1} & \beta+\beta^{-1} \end{bmatrix}=\cosh(\ln\beta)\1+\sinh(\ln\beta)\sigma_1, \qquad \beta\in\mathbb{C}\setminus\{0\}
\end{align*}
and
\begin{align*}
    2\cosh(a)\cosh(b)&=\cosh(a+b)+\cosh(a-b), \\
    2\sinh(a)\sinh(b)&=\cosh(a+b)-\cosh(a-b), \\
    2\cosh(a)\sinh(b)&=\sinh(a+b)-\sinh(a-b),
\end{align*}
the Lemma follows from a straightforward calculation.
\end{proof}

\bl\label{lem-resM} 
The matrices $M_j, N_j$ of \eqref{S(z)} have the form $M_{j}=m_j\1$ and $N_{j}=n_j\s_3$ for $j=1,\ldots,n$, where
\be\label{m_n}
m_j=k_j\begin{cases}
-\sin(\nu_j-\alpha), & j ~ \text{odd}, \\
\cos(\nu_j+\a), & j ~ \text{even},
\end{cases} \hspace{1cm}
n_j=k_j\begin{cases}
-\cos(\nu_j-\alpha), & j ~ \text{odd}, \\
\sin(\nu_j+\a), & j ~ \text{even},
\end{cases}
\ee
$\a=\Im\tgg(\infty)$ and 
\begin{align}\label{k_j}
    k_j=\begin{cases}
    \frac{1}{2}(a_2-a_1)\cos(\nu_j)(\sin\nu_j)^{\frac{1-(-1)^n}{2}}\prod_{\substack{l=1 \\ l\neq j}}^n\left(\sin|\nu_l-\nu_j|\right)^{(-1)^{l}}, & j ~ \text{odd}, \\
    \frac{1}{2}(a_2-a_1)(\sin\nu_j)^{\frac{1+(-1)^n}{2}}\prod_{\substack{l=1 \\ l\neq j}}^n\left(\sin|\nu_l-\nu_j|\right)^{(-1)^{l+1}}, & j ~ \text{even}.
 \end{cases}
\end{align}
\el
                                                                               
\begin{proof}
According to \eqref{Psi_S},
\begin{align}\label{M0}
S(z)= e^{\tgg(\infty)\s_3}A^{\frac{\s_1}{4}}e^{-\tgg(z)\s_3}B^{-\frac{\s_1}{4}},
\end{align}
where (recall $\tilde{B}_n(z)$ was defined in \eqref{tilde Bn})
\begin{align*}
    A=A(z)=\frac{z-a_1}{z-a_2}, ~~~ B=B_n(z)=\frac{(z-a_1)}{(z-a_2)^{(-1)^n}}\tilde{B}_n(z)^4.
\end{align*}
Then, using Lemma \ref{lem-s1}, we have
\be\label{M1}
M(z)= e^{\tgg(\infty)\s_3}\le[\le(\frac{A}{B}\ri)^{\frac{\s_1}{4}}\cosh \tgg(z)-\s_3\le(\frac{1}{AB}\ri)^{\frac{\s_1}{4}}\sinh \tgg(z)\ri].
\ee
Using \eqref{etgg} we obtain
\be\label{chsh}
\begin{split}
\cosh \tgg(z_+)= \frac{i}{2}|z-b_j|^{-\frac{1}{2}}(-1)^{j+1}(\hat{k}_j\hat{s}_j(a_2-a_1))^\frac{1-(-1)^n}{4})^{(-1)^{j+1}}+\BigO{1},\cr
\sinh \tgg(z_+)=\frac{i}{2}|z-b_j|^{-\frac{1}{2}}(\hat{k}_j\hat{s}_j(a_2-a_1))^\frac{1-(-1)^n}{4})^{(-1)^{j+1}}+\BigO{1},
 \end{split}
\ee
as $z\ra b_{j}$ on the upper shore and interior of $J$, $j=1,\dots,n$.  It is straightforward to verify that $\arg \le(\frac AB\ri)^\qt =-\frac \pi 2$ and
$\arg \le(\frac 1{AB}\ri)^\qt =0$ on the upper shore and interior of $J$ so that
\be\label{A_B}
\le(\frac AB\ri)^\qt =-i\le|\frac AB\ri|^\qt,~~~~
\le(\frac 1{AB}\ri)^\qt=\le|\frac 1{AB}\ri|^\qt.
\ee
Moreover, according to \eqref{M0}, \eqref{A_B}, 
\bea\label{A_B1}
\begin{split}
&&2\le(\frac AB\ri)^{\frac{\s_1}{4}} =(-1)^{j+1}i\left(\hat{k}_j\right)^{(-1)^{j+1}}\tilde{A}_{j,1}(\1+(-1)^j\s_1)|z-b_{j}|^{-\hf}+\BigO{1},\cr
&&2\le(\frac 1{AB}\ri)^{\frac{\s_1}{4}}=\left(\hat{k}_j\right)^{(-1)^{j+1}}\tilde{A}_{j,2}(\1+(-1)^j\s_1)|z-b_{j}|^{-\hf}+\BigO{1},
\end{split}
\eea
where constants $\hat{k}_j>0$ are defined in \eqref{hat kj} and
\begin{align}
    \tilde{A}_{j,1}&=\begin{cases}
    1, & n ~ \text{even}, \\
    (a_2-b_j)^\frac{(-1)^{j+1}}{2}, & n ~ \text{odd},
    \end{cases} \\
    \tilde{A}_{j,2}&=\begin{cases}
    \left(\frac{b_j-a_1}{a_2-b_j}\right)^\frac{(-1)^j}{2}, & n ~ \text{even}, \\
    (b_j-a_1)^\frac{(-1)^{j+1}}{2}, & n ~ \text{odd}.
    \end{cases}
\end{align}
Substituting \eqref{chsh}-\eqref{A_B1} into \eqref{M1}, letting $\alpha=\Im[\tgg(\infty)]$, and comparing the $\BigO{(z-b_j)^{-1}}$ term (leading order obtained by substituting \eqref{chsh}-\eqref{A_B1} into \eqref{M1} is of the form $\frac{c}{|z-b_j|}$, be careful with absolute value) with that of 
\eqref{S(z)}, we obtain 
\begin{align*}
M_j+iN_j&=\frac{(-1)^j}{4}(\hat{k}_j^2\hat{s}_j(a_2-a_1)^\frac{1-(-1)^n}{4})^{(-1)^{j+1}}e^{\tgg(\infty)\sigma_3}\le(\1\tilde{A}_{j,1} + i\s_3\tilde{A}_{j,2}\ri) \\
&=\frac{(-1)^j}{4}(\hat{k}_j^2\hat{s}_j(a_2-a_1)^\frac{1-(-1)^n}{4})^{(-1)^{j+1}}\le[\le(\tilde{A}_{j,1}\cos\a-\tilde{A}_{j,2}\sin\a\ri)\1+\le(\tilde{A}_{j,2}\cos\a +\tilde{A}_{j,1}\sin\a\ri)i\s_3\ri].
\end{align*}
Using the fact that $\sqrt{b_j-a_1}=\sqrt{a_2-a_1}\cos(\nu_j)$ and $\sqrt{a_2-b_j}=\sqrt{a_2-a_1}\sin(\nu_j)$, we have
\begin{align}
    D_j=k_j\begin{cases}
    -\sin(\nu_j-\alpha)\1-\cos(\nu_j-\alpha)i\sigma_3, & j ~ \text{odd}, \\
    \cos(\nu_j+\a)\1+\sin(\nu_j+\a)i\s_3, & j ~ \text{even},
    \end{cases}
\end{align}
where
\be
k_j=\begin{cases}
    \frac{\hat{k}_j^2\hat{s}_j}{4}(a_2-a_1)^\frac{1-(-1)^n}{2}(\sin\nu_j)^\frac{1+(-1)^n}{-2}, & j ~ \text{odd}, \\
    \frac{\sec\nu_j}{4\hat{k}_j^2\hat{s}_j}(a_2-a_1)^\frac{(-1)^n-1}{2}(\sin\nu_j)^\frac{(-1)^n-1}{2}, & j ~ \text{even}.
\end{cases}
 %
 \ee
These  equations imply \eqref{m_n}.  The relation $b_j-b_l=(a_2-a_1)\sin(\nu_l+\nu_j)\sin(\nu_l-\nu_k)$ is used to complete the proof.
\end{proof}

We now show that $\tilde{R}_{JJ}, \tilde{R}_{JE}, \tilde{R}_{EJ}, \tilde{R}_{EE}$ can be expressed as quadratic forms.  Let $N\in\mathbb{N}$ be such that $n=2N$ if $n$ is even and $n=2N+1$ if $n$ is odd and 
\begin{align*}
    \tilde{N}:=\begin{cases}
     N, & n ~ \text{even}, \\
     N+1, & n ~ \text{odd}.
    \end{cases}
\end{align*}
We define the $n\times n$ block diagonal matrix
\begin{align}\label{M-def}
    \mathbb{M}:=\begin{bmatrix} \mathbb{M}_1 & 0 \\ 0 & \mathbb{M}_2 \end{bmatrix},
\end{align}
where $\mathbb{M}_1, \mathbb{M}_2$ are symmetric blocks of size $\tilde{N}\times\tilde{N}$, $N\times N$, respectively, with diagonal entries
\begin{align}
    \left[\mathbb{M}_1\right]_{ll}&=k_{2l-1}\cos(\nu_{2l-1}-\alpha)-2k_{2l-1}\sum_{\substack{j=1 \\ j\neq l}}^{\tilde{N}}\frac{k_{2j-1}\sin(\nu_{2l-1}-\nu_{2j-1})}{b_{2j-1}-b_{2l-1}}, \label{M1 diag} \\
    \left[\mathbb{M}_2\right]_{ll}&=k_{2l}\sin(\nu_{2l}+\alpha)-2k_{2l}\sum_{\substack{j=1 \\ j\neq l}}^{N}\frac{k_{2j}\sin(\nu_{2l}-\nu_{2j})}{b_{2j}-b_{2l}}, \label{M2 diag}
\end{align}
and off-diagonal entries
\begin{align}\label{offdiag-ent}
    \left[\mathbb{M}_1\right]_{jl}=\frac{2k_{2j-1}k_{2l-1}\sin(\nu_{2j-1}-\nu_{2l-1})}{b_{2l-1}-b_{2j-1}}, \qquad \left[\mathbb{M}_2\right]_{jl}=\frac{2k_{2j}k_{2l}\sin(\nu_{2j}-\nu_{2l})}{b_{2l}-b_{2j}},
\end{align}
where $l\neq j$.  We show that $\mathbb{M}$ is positive definite in Proposition \ref{prop: M pos def}.

\bt\label{theo-dim_n}
Let $\lambda\in[-1,1]\setminus\{0\}$ and, with abuse of notation, $\varkappa=\varkappa(|\lambda|)$.  We define $\vec{f}, \vec{h}:(\mathring{J}\cup\mathring{E})\times[-1,1]\setminus\{0\}\to\mathbb{R}^n$ as
\begin{align}
    \vec{f}^{\hspace{0.4mm}t}(x;\varkappa)&=\begin{bmatrix} \frac{\phi_-(x;\varkappa)}{x-b_1} & \cdots & \frac{\phi_-(x;\varkappa)}{x-b_{2\tilde{N}-1}} & \frac{\phi_+(x;\varkappa)}{x-b_2} & \cdots &  \frac{\phi_+(x;\varkappa)}{x-b_{2N}} \end{bmatrix}, \\
    \vec{h}^{\hspace{0.4mm}t}(x;\varkappa)&=\begin{bmatrix} \frac{\tilde{\phi}_-(x;\varkappa)}{x-b_1} & \cdots & \frac{\tilde{\phi}_-(x;\varkappa)}{x-b_{2\tilde{N}-1}} & \frac{\tilde{\phi}_+(x;\varkappa)}{x-b_2} & \cdots &  \frac{\tilde{\phi}_+(x;\varkappa)}{x-b_{2N}} \end{bmatrix},
\end{align}
where
\begin{align}\label{phi pm}
    \phi_{\pm}(x;\varkappa)=\Re\le(e^{i\ka \Im\gg(x_+)}B_{n}^{\pm\qt}(x_+)\ri), ~~~ \tilde{\phi}_\pm(x;\varkappa)=\pm\Re\left(e^{-i\varkappa\Im\gg(x_+)}B_{n}^{\pm\frac{1}{4}}(x_+)\right).
\end{align}
We mention that $\pm$ occurring in $\phi_\pm$, $\tilde{\phi}_\pm$ is convenient notation and should not be interpreted as a boundary value.
\begin{enumerate}
    \item For $x,y\in \mathring{E}$,
    \begin{align}\label{REE quad-form}
        \tilde{R}_{EE}(x,y;\lambda)=\mathrm{sgn}(\lambda)\frac{4}{i\pi}\vec{h}^{\hspace{0.4mm}t}(y;\varkappa)\mathbb{M}\vec{h}(x;\varkappa).
    \end{align}

    \item For $x\in\mathring{E}$ and $y\in\mathring{J}$,
    \begin{align}\label{REJ quad-form}
        \tilde{R}_{EJ}(x,y;\lambda)=\frac{4}{i\pi}\vec{h}^{\hspace{0.4mm}t}(y;\varkappa)\mathbb{M}\vec{f}(x;\varkappa).
    \end{align}
    
    \item For $x\in\mathring{J}$ and $y\in\mathring{E}$,
    \begin{align}\label{RJE quad-form}
        \tilde{R}_{JE}(x,y;\lambda)=\frac{4}{i\pi}\vec{f}^{\hspace{0.4mm}t}(y;\varkappa)\mathbb{M}\vec{h}(x;\varkappa).
    \end{align}
    
    \item For $x,y\in \mathring{J}$,
    \begin{equation}\label{RJJ quad-form}
        \tilde{R}_{JJ}(x,y;\lambda) = \mathrm{sgn}(\lambda)\frac{4}{i\pi}\vec{f}^{\hspace{0.4mm}t}(y;\varkappa)\mathbb{M}\vec{f}(x;\varkappa).
    \end{equation}
\end{enumerate}
\et

\begin{proof}
We assume all multi-valued functions are evaluated on the upper shore of $\mathring{J}\cup \mathring{E}$.  We prove this Theorem by evaluating the determinants in \eqref{tREE}, \eqref{tREJ}, \eqref{tRJE}, \eqref{tRJJ}.  Taking $x,y\in \mathring{J}$ and using \eqref{wh Psi}, \eqref{breve Psi} we have
\begin{align}
    \Re[e^{i\varkappa\Im\gg(y)}\wh{\Psi}_{2}(y)]&=\frac{1}{2}\phi_+(y;\varkappa)\begin{bmatrix} 1 \\ 1 \end{bmatrix}+\frac{1}{2}\phi_-(y;\varkappa)\begin{bmatrix} -1 \\ 1 \end{bmatrix}+\sum_{j=1}^n\frac{m_j}{y-b_j}\phi_{(-1)^j}(y;\varkappa)\begin{bmatrix} (-1)^j \\ 1 \end{bmatrix} \label{Re hat Psi2} \\
    \Re[e^{i\varkappa\Im\gg(x)}\breve{\Psi}_{2}(x)]&=\sum_{j=1}^n\frac{n_j}{x-b_j}\phi_{(-1)^j}(x;\varkappa)\begin{bmatrix} (-1)^j \\ -1 \end{bmatrix}, \label{Re tilde Psi2}
\end{align}
where $\phi_{(-1)^j}=\phi_+$ for $j$ even and $\phi_{(-1)^j}=\phi_-$ for $j$ odd.  Now evaluating the first determinant in \eqref{tRJJ}, we find
\begin{multline} \label{1st_det}
\begin{vmatrix} \Re[e^{i\varkappa\Im\gg(y)}\wh{\Psi}_{2}(y)] & \Re[e^{i\varkappa\Im\gg(x)}\breve{\Psi}_{2}(x)] \end{vmatrix}=\sum_{j=1}^n\frac{n_j\phi_{(-1)^j}(y;\ka)\phi_{(-1)^j}(x;\ka)}{(-1)^{j+1}(x-b_j)} \\
+2\sum_{\substack{j,l=1 \\ j=l\mod2}}^{n}\frac{m_jn_l\phi_{(-1)^j}(y;\ka)\phi_{(-1)^j}(x;\ka)}{(-1)^{j+1}(y-b_j)(x-b_l)}.
\end{multline}
Using \eqref{1st_det}, the numerator of \eqref{tRJJ} is
\begin{multline}\label{det-double}
\begin{vmatrix} \Re[e^{i\varkappa\Im\gg(y)}\wh{\Psi}_{2}(y)] & \Re[e^{i\varkappa\Im\gg(x)}\breve{\Psi}_{2}(x)] \end{vmatrix}-\begin{vmatrix} \Re[e^{i\varkappa\Im\gg(x)}\wh{\Psi}_{2}(x)] & \Re[e^{i\varkappa\Im\gg(y)}\breve{\Psi}_{2}(y)] \end{vmatrix}= \\
(x-y)\Bigg(\sum_{j=1}^n\frac{n_j\phi_{(-1)^j}(y;\ka)\phi_{(-1)^j}(x;\ka)}{(-1)^{j}(y-b_j)(x-b_j)}-2\sum_{\substack{j,l=1,j<l \\ j=l {\mod 2}}}^n k_jk_l\frac{\sin(\nu_j-\nu_l)(b_l-b_j)\phi_{(-1)^j}(y;\ka)\phi_{(-1)^j}(x;\ka)}{(y-b_j)(y-b_l)(x-b_j)(x-b_l)}\Bigg),
\end{multline}
where we have used $n_{l}m_j-n_jm_{l}=(-1)^{j+1}k_jk_l\sin(\nu_j-\nu_l)$, where $l>j$.  Now using
\begin{align*}
    \frac{b_l-b_j}{(y-b_j)(x-b_l)(y-b_l)(x-b_j)}=\frac{1}{b_l-b_j}\left(\frac{1}{y-b_j}-\frac{1}{y-b_l}\right)\left(\frac{1}{x-b_j}-\frac{1}{x-b_l}\right),
\end{align*}
we see that \eqref{det-double} is equivalent to
\begin{multline}\label{det-single}
    (x-y)\Bigg[\sum_{j=1}^n\frac{\phi_{(-1)^j}(y;\ka)\phi_{(-1)^j}(x;\ka)}{(y-b_j)(x-b_j)}\Bigg((-1)^{j}n_j-2k_j\sum_{\substack{l=1,j\neq l \\ l=j\mod2}}^{n}\frac{k_l\sin(\nu_j-\nu_l)}{b_l-b_j}\Bigg) \\
    +2\sum_{\substack{j,l=1, j<l \\ j=l\mod 2}}^n k_jk_l\frac{\sin(\nu_j-\nu_l)\phi_{(-1)^j}(y;\ka)\phi_{(-1)^j}(x;\ka)}{b_l-b_j}\left(\frac{1}{(y-b_j)(x-b_l)}+\frac{1}{(y-b_l)(x-b_j)}\right)\Bigg].
\end{multline}
The statement \eqref{RJJ quad-form} follows from \eqref{m_n}, \eqref{tRJJ}, \eqref{det-single}.  Now taking $x,y\in\mathring{E}$ and evaluating the determinant in \eqref{tREE} using \eqref{wh Psi}, \eqref{breve Psi}, we see that
\begin{align}
    \Re[e^{-i\varkappa\Im\gg(y)}\hat{\Psi}_1(y)]&=\frac{1}{2}\tilde{\phi}_+(y;\varkappa)\begin{bmatrix} 1 \\ 1 \end{bmatrix}+\frac{1}{2}\tilde{\phi}_-(y;\varkappa)\begin{bmatrix} -1 \\ 1 \end{bmatrix}+\sum_{j=1}^n\frac{m_j\tilde{\phi}_{(-1)^j}(y;\varkappa)}{y-b_j}\begin{bmatrix} (-1)^j \\ 1 \end{bmatrix}, \label{Re hat Psi1} \\
    \Re[e^{-i\varkappa\Im\gg(x)}\breve{\Psi}_1(x)]&=\sum_{j=1}^n\frac{n_j\tilde{\phi}_{(-1)^j}(x;\varkappa)}{x-b_j}\begin{bmatrix} (-1)^j \\ -1 \end{bmatrix}, \label{Re tilde Psi1}
\end{align}
where $\tilde{\phi}_{(-1)^j}=\tilde{\phi}_+$ for $j$ even and $\tilde{\phi}_{(-1)^j}=\tilde{\phi}_-$ for $j$ odd.  It is clear that \eqref{Re hat Psi1}, \eqref{Re tilde Psi1} are of the same form as \eqref{Re hat Psi2}, \eqref{Re tilde Psi2} and thus the result \eqref{RJJ quad-form} follows immediately.  The results for $\tilde{R}_{EJ}$ and $\tilde{R}_{JE}$ are obtained similarly.
\end{proof}

\begin{remark}\label{rem: phi tphi}
To compute $\phi_\pm,\tilde{\phi}_\pm$ more explicitly, we first verify
\begin{align}\label{arg Bn}
    \mathrm{arg}(B_{n}^{\pm\frac{1}{4}}(x_+))=\pm\begin{cases}
     \frac{\pi}{4}, & x\in \mathring{J}, \\
     -\frac{\pi}{4}, & x\in \mathring{E}.
    \end{cases}
\end{align}
Using that fact, we have
\begin{align*}
    \phi_{\pm}(x;\ka)&=\Re\le(e^{i\ka \Im\gg(x_+)}B_{n}^{\pm\qt}(x_+)\ri)=|B_{n}^{\pm\frac{1}{4}}(x_+)|\cos\left(\varkappa\Im\gg(x_+)\pm\frac{\pi}{4}\right), & & x\in \mathring{J}, \\
    \tilde{\phi}_\pm(x;\varkappa)&=\pm\Re\left(e^{-i\varkappa\Im\gg(x_+)}B_{n}^{\pm\frac{1}{4}}(x_+)\right)=\pm|B_{n}^{\pm\frac{1}{4}}(x_+)|\cos\left(\varkappa\Im\gg(x_+)\pm\frac{\pi}{4}\right), & & x\in\mathring{E}.
\end{align*}
\end{remark}

Notice that $\vec{f}, \vec{h}$ can be decomposed as
\begin{align}\label{fh decomp}
\begin{split}
    \vec{f}(x;\varkappa)&=\phi_-(x;\varkappa)\begin{bmatrix} \vec{v}_-(x) \\ \vec{0} \end{bmatrix}+\phi_+(x;\varkappa)\begin{bmatrix} \vec{0} \\ \vec{v}_+(x) \end{bmatrix}, \\
    \vec{h}(x;\varkappa)&=\tilde{\phi}_-(x;\varkappa)\begin{bmatrix} \vec{v}_-(x) \\ \vec{0} \end{bmatrix}+\tilde{\phi}_+(x;\varkappa)\begin{bmatrix} \vec{0} \\ \vec{v}_+(x) \end{bmatrix},
\end{split}
\end{align}
where $\vec{0}$ denotes the (appropriately sized) vector of all $0$ and
\begin{align}
    \vec{v}_-(x):=\begin{bmatrix} \frac{1}{x-b_1} \\ \vdots \\ \frac{1}{x-b_{2\tilde{N}-1}} \end{bmatrix}, ~~~ \vec{v}_+(x):=\begin{bmatrix} \frac{1}{x-b_2} \\ \vdots \\ \frac{1}{x-b_{2N}} \end{bmatrix}.
\end{align}
Recall from \eqref{M-def} and Proposition \ref{prop: M pos def} that $\mathbb{M}$ is a block diagonal matrix with positive definite blocks $\mathbb{M}_1$, $\mathbb{M}_2$.  Let $C_-$, $C_+$, and $C$ denote the upper triangular Cholesky factor {with a positive diagonal} of $\mathbb{M}_1$, $\mathbb{M}_2$, and $\mathbb{M}$, respectively.  Then,
\begin{align}\label{M Choleksy}
    \mathbb{M}=\begin{bmatrix} C_-^tC_- & 0 \\ 0 & C_+^tC_+ \end{bmatrix}=\begin{bmatrix} C_- & 0 \\ 0 & C_+ \end{bmatrix}^t\begin{bmatrix} C_- & 0 \\ 0 & C_+ \end{bmatrix}=:C^tC,
\end{align}
where $C$ is the Cholesky factor of $\mathbb{M}$.  It is to be noted that the upper triangular Cholesky factor {with a positive diagonal} of a positive definite matrix is unique {\cite[Lemma~12.1.6]{MSch02}}.  We conclude this Section with the following Proposition.

\begin{proposition}\label{Eprime simple}
Suppose $\lambda\in(-1,1)\setminus\{0\}$, $x,y$ are in a compact subset of $\mathring{J}\cup\mathring{E}$ and, with abuse of notation, $\varkappa=\varkappa(|\lambda|)$.  Then, as $\lambda\to0$, we have the identity
\begin{align}\label{Eprime approx}
    |\l|E_{ac}'(x,y;\lambda)=\sum_{j=1}^{n}G_j(x;\lambda)G_j(y;\lambda)+\BigO{\varkappa^{-1}},
\end{align}
where $E_{ac}'(x,y;\lambda)$ is the kernel of $\frac{d}{d\lambda}\mathcal{E}_{ac}[f](x;\lambda)$, see \eqref{complete res of id} and the text that follows it,
\begin{align}
    G_j(x;\lambda):=\chi_E(x)\hat{h}_j(x;\lambda)+\chi_J(x)\hat{f}_j(x;\lambda),
\end{align}
and
\begin{align}
    \hat{h}_j(x;\lambda)&=A_j(x)\cos\left(\varkappa\Im\gg(x_+)+s(j)\frac{\pi}{4}\right)s(j), \label{h prop} \\
    \hat{f}_j(x;\lambda)&=A_j(x)\cos\left(\varkappa\Im\gg(x_+)+s(j)\frac{\pi}{4}\right)\mathrm{sgn}(\lambda), \label{f prop}
\end{align}
with $s(j)=-1$ for $j=1,\ldots,\tilde{N}$, $s(j)=1$ for $j=\tilde{N}+1,\ldots,n$,
\begin{align}\label{A prop}
    A_j(x)=\frac{\sqrt{2}}{\pi}\begin{cases}
    |B_{n}^{-\frac{1}{4}}(x_+)|[C_-\vec{v}_-(x)]_j, & j=1,\ldots,\tilde{N}, \\
    |B_{n}^{\frac{1}{4}}(x_+)|[C_+\vec{v}_+(x)]_{j-\tilde{N}}, & j=\tilde{N}+1,\ldots,n.
    \end{cases}
\end{align}
\end{proposition}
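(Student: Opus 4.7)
The plan is to assemble the proposition from three earlier results, combining them with a Cholesky factorization of $\mathbb{M}$. Specifically, Corollary \ref{prop: deriv res of id} expresses $E_{ac}'$ as a combination of the four jumps $\Delta_\lambda R_{**}$; Proposition \ref{prop: Delta R asymp} supplies the leading-order replacements $\Delta_\lambda R_{**}=\tilde R_{**}+\BigO{\varkappa^{-1}}$ uniformly on compacta of $\mathring{J}\cup\mathring{E}$; and Theorem \ref{theo-dim_n} rewrites each $\tilde R_{**}$ as a quadratic form in the vectors $\vec f,\vec h$ with common matrix $\mathbb{M}$. A block-diagonal Cholesky factorization of $\mathbb{M}$ will then produce the rank-$n$ decomposition, and the explicit formulas for $A_j,\hat h_j,\hat f_j$ will emerge from the splitting \eqref{fh decomp} together with Remark \ref{rem: phi tphi}.

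First I would multiply Corollary \ref{prop: deriv res of id} by $|\lambda|$, use $|\lambda|/\lambda=\mathrm{sgn}(\lambda)$, and substitute the asymptotic formulas from Proposition \ref{prop: Delta R asymp}. The overall prefactor $-\mathrm{sgn}(\lambda)/(2\pi i)$ combined with the common factor $4/(i\pi)$ appearing in every quadratic form from Theorem \ref{theo-dim_n} produces $2\,\mathrm{sgn}(\lambda)/\pi^2$. The key bookkeeping observation is that the extra $\mathrm{sgn}(\lambda)$ present in $\tilde R_{EE}$ and $\tilde R_{JJ}$ (but absent in $\tilde R_{EJ},\tilde R_{JE}$) combines with this prefactor so that the diagonal $EE/JJ$ pieces end up with no residual sign, while the off-diagonal $EJ/JE$ pieces pick up exactly one factor of $\mathrm{sgn}(\lambda)$. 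Defining
\[
\vec G(x;\lambda):=\chi_E(x)\vec h(x;\varkappa)+\mathrm{sgn}(\lambda)\chi_J(x)\vec f(x;\varkappa),
\]
and using symmetry of $\mathbb{M}$ together with $\mathrm{sgn}(\lambda)^2=1$, the four contributions collapse into the single quadratic form
\[
|\lambda|E_{ac}'(x,y;\lambda)=\frac{2}{\pi^2}\,\vec G(y;\lambda)^t\mathbb{M}\,\vec G(x;\lambda)+\BigO{\varkappa^{-1}}.
\]

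Second, I would invoke the Cholesky factorization $\mathbb{M}=C^tC$ with $C=\mathrm{diag}(C_-,C_+)$ upper triangular (well-defined by Proposition \ref{prop: M pos def}). Then $\vec G(y)^t\mathbb{M}\vec G(x)=\sum_{j=1}^n[C\vec G(y)]_j[C\vec G(x)]_j$, producing the desired rank-$n$ decomposition with $G_j(x;\lambda):=\tfrac{\sqrt 2}{\pi}[C\vec G(x;\lambda)]_j$.

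Finally, to match the explicit formulas \eqref{h prop}--\eqref{A prop}, I would substitute the splitting \eqref{fh decomp} of $\vec f$ and $\vec h$ into their $\vec v_-$ and $\vec v_+$ components, exploiting the block-diagonal structure of $C$ so that $C_-$ acts only on the $\vec v_-$-block and $C_+$ only on the $\vec v_+$-block. Remark \ref{rem: phi tphi} rewrites $\phi_\pm$ (on $J$) and $\tilde\phi_\pm$ (on $E$) in cosine form with phase shift $\pm\pi/4$, and matching termwise yields the stated $A_j(x)$, the sign $s(j)=\mp 1$ corresponding to the $\vec v_\mp$-block, and the $\mathrm{sgn}(\lambda)$ that distinguishes $\hat f_j$ (on $J$) from $\hat h_j$ (on $E$). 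The principal obstacle is the sign reorganization in the first step; once the correct combination $\vec G(x;\lambda)$ is identified, everything reduces to routine termwise verification.
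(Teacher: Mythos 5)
Your proposal is correct and follows essentially the same route as the paper's proof: combine Corollary~\ref{prop: deriv res of id}, Proposition~\ref{prop: Delta R asymp}, and Theorem~\ref{theo-dim_n}, reconcile the $\mathrm{sgn}(\lambda)$ factors against the prefactor $-|\lambda|/(2\pi i\lambda)$, apply the block Cholesky factorization $\mathbb{M}=C^tC$ from \eqref{M Choleksy}, and then read off the explicit formulas via \eqref{fh decomp} and Remark~\ref{rem: phi tphi}. Your intermediate vector $\vec G(x;\lambda)=\chi_E(x)\vec h(x;\varkappa)+\mathrm{sgn}(\lambda)\chi_J(x)\vec f(x;\varkappa)$ is a tidy way of packaging the four-term quadratic-form identity that the paper writes out explicitly, and your sign bookkeeping is exactly right.
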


\begin{proof}
From \eqref{deriv res of id kernel}, Proposition \ref{prop: Delta R asymp} and Theorem \ref{theo-dim_n} we find that
\begin{multline}
    |\lambda|E_{ac}'(x,y;\lambda)=\frac{2}{\pi^2}\left(\vec{h}^{\hspace{0.4mm}t}(y;\varkappa)\mathbb{M}\vec{h}(x;\varkappa)\chi_E(x)\chi_E(y)+\mathrm{sgn}(\lambda)\vec{h}^{\hspace{0.4mm}t}(y;\varkappa)\mathbb{M}\vec{f}(x;\varkappa)\chi_E(y)\chi_J(x) \right. \\
    \left.+\mathrm{sgn}(\lambda)\vec{f}^{\hspace{0.4mm}t}(y;\varkappa)\mathbb{M}\vec{h}(x;\varkappa)\chi_J(y)\chi_E(x)+\vec{f}^{\hspace{0.4mm}t}(y;\varkappa)\mathbb{M}\vec{f}(x;\varkappa)\chi_J(x)\chi_J(y)\right)+\BigO{\varkappa^{-1}}.
\end{multline}
Using the Cholesky decomposition $\mathbb{M}=C^tC$ and letting
\begin{align}\label{hf def}
    \hat{h}_j(x;\lambda):=\frac{\sqrt{2}}{\pi}[C\vec{h}(x;\varkappa)]_j, \qquad \hat{f}_j(x;\lambda):=\mathrm{sgn}(\lambda)\frac{\sqrt{2}}{\pi}[C\vec{f}(x;\varkappa)]_j,
\end{align}
we have
\begin{align}\label{Eac h f}
    |\l|E_{ac}'(x,y;\lambda)=\sum_{j=1}^n(\chi_E(x)\hat{h}_j(x;\lambda)+\chi_J(x)\hat{f}_j(x))(\chi_E(y)\hat{h}_j(y;\lambda)+\chi_J(y)\hat{f}_j(y))+\BigO{\varkappa^{-1}}.
\end{align}
Remark \ref{rem: phi tphi}, \eqref{fh decomp}, \eqref{M Choleksy} are now used to show that $\hat{h}_j(x;\lambda)$ and $\hat{f}_j(x;\lambda)$ defined in \eqref{hf def} satisfy \eqref{h prop}, \eqref{f prop}, and \eqref{A prop}, thereby completing the proof.
\end{proof}

\section{Relating the exact resolution of the identity and its approximation}\label{sec: exact to approx}

\subsection{Diagonalization of $\mathscr{K}$}
In our case only one interval $U=J\cup E=[a_1,a_2]$ is present. Lemma \ref{lemma: gamma no pole} implies that the eigenvalues of $\mathscr{K}$ do not accumulate at zero, so their number is, at most, finite. By the results in the proof of Theorem~5.5 in \cite{BKT20} (see eq. (5.24) and the text following it), all the eigenfunctions of $\mathscr{K}$ are smooth. By statement (2) of Theorem~\ref{thm: spec prop}, all the eigenspaces are finite-dimensional, and statement (1) of Theorem~\ref{thm:main} is proven.

Let $\mathscr{K}_{ac}$ denote the absolutely continuous part of $\mathscr{K}$, i.e. the part of $\mathscr{K}$ in the subspace $H_{ac}\subset L^2(U)$ of absolute continuity with respect to $\mathscr{K}$. 
Let $g_1,\dots,g_n\in H_{ac}$ be a generating basis for $\mathscr{K}_{ac}$.  Consider the matrix distribution function
\be\label{S-fn}
S(\l)=(\E(\l) g_j,g_k)_{j,k=1}^n.
\ee
In \cite{BKT20} the authors construct a self-adjoint operator $B:L^2([-1,1],\mathbb R^n)\to L^2([-1,1],\mathbb R^n)$, which is unitarily equivalent to $\mathscr{K}_{ac}$, i.e. $B=W^\dagger\mathscr{K}_{ac}W$ for a unitary $W:L^2([-1,1],\mathbb R^n)\to H_{ac}$, see \cite[eq. (6.20)]{BKT20}. 
Moreover, $B\tilde h=\l\tilde h$, where $\tilde h$ is a $\mathbb R^n$-valued function of $\l$. Clearly, the spectral family of $B$ satisfies $\V(\l) \tilde h=\chi_\l \tilde h$, where $\chi_\l$ is the characteristic function of $(-\infty,\l]$ (see, e.g. \cite[Theorem 7.18]{weidm80}).  Thus, we can find a convenient generating basis for $B$. Set $\tilde h_j(\l):=\vec e_j$, $\l\in[-1,1]$, where $\vec e_j$ is the $j$-th standard basis vector in $\mathbb R^n$. Then the collection $\tilde h_1,\dots,\tilde h_n\in L^2([-1,1],\mathbb R^n)$ is a generating basis with the property  
\be\label{gen_set}
    \frac{d}{d\lambda}(\V(\l) \tilde h_j,\vec h_k)=\delta_{jk},\ 1\leq j,k\leq n,\ \l\in [-1,1].
\ee
As is easily seen, $\V(\l)=W^\dagger\E(\l)W$, the collection $g_j=W\tilde h_j$, $1\leq j\leq n$, is a generating basis for $\mathscr{K}_{ac}$, and 
\be\label{S-id}
    S'_{jk}(\l)=\frac{d}{d\lambda}(\E(\l) g_j,g_k)=\delta_{jk},\ 1\leq j,k\leq n,\ \l\in [-1,1].
\ee

By \cite[Theorem p. 291]{ag80}, there is a unitary $\U_{ac}:\,H_{ac}\to L^2([-1,1],\mathbb R^n)$, $\U_{ac}\phi=\tilde\phi$, so that
\be
\begin{split}\label{isometry}
    &\phi=\sum_{j=1}^n \int_{-1}^1 \tilde \phi_j(\l) d\E(\l) g_j, ~~~ \phi\in H_{ac},\quad \tilde \phi=(\tilde \phi_1,\dots,\tilde \phi_n)\in L^2([-1,1],\mathbb R^n).
\end{split}
\ee
Moreover, $\U_{ac}\mathscr{K}_{ac}\U_{ac}^\dagger\tilde\phi=\l\tilde\phi$, $\tilde\phi\in L^2([-1,1],\mathbb R^n)$.
By looking at
\be\label{isometry-distr}
(\phi,f)=\sum_{j=1}^n \int_{-1}^1 \tilde \phi_j(\l) d(\E(\l) g_j,f),\ \phi,f\in H_{ac},
\ee
we conclude by duality that
\be\label{isometry-3-alt}
\U_{ac}f=\left((\E'(\l) g_1,f),\dots,(\E'(\l) g_n,f)\right),\ f\in H_{ac}.
\ee
Fix any $g\in H_{ac}$. Since $(\E(\l) g,f)=(\E(\l) g,f_{ac})$ for any $f\in L^2(U)$, the above formula naturally extends $\U_{ac}$ to all of $L^2(U)$. In particular, $\U_{ac}f=0$ if $f\in H_p$. The convergence of the integral in \eqref{isometry-distr} follows from the inequality
\be\label{E-ineq}
|d(\E(\l) g,f)|\leq \left[(d\E(\l) g,g)d(\E(\l) f,f)\right]^{1/2},\ g,f\in H_{ac},
\ee
see \cite[p. 356]{Kato}. Here we used \eqref{S-id} and that $\tilde\phi\in L^2([-1,1],\mathbb R^n)$.

Of interest to us are the following operators that make up the extended $\U_{ac}$:
\be\label{isometry-part}
\Q_j: L^2(U)\to L^2([-1,1]),\ \Q_j f:=(\E'(\l) g_j,f),\ 1\leq j\leq n,
\ee
which are bounded, because $\U_{ac}$ is bounded. 
Each operator $\Q_j$ can be viewed as a continuous map $C_0^\infty(\mathring{J}\cup \mathring{E})\to \mathcal D'([-1,1])$. Denote by $Q_j(x;\l):=\E'(\l) g_j$ the Schwartz kernel of $\Q_j$ (see \cite[Section 5.2]{hor1}). 
For any interval $\Delta\subset [-1,1]\setminus \{0\}$,
\be\begin{split}\label{dot-prod}
(\E_{ac}(\Delta)\phi,f)=&(\E_{ac}(\Delta)\phi_{ac},f)=
\int_{\Delta}(\E_{ac}'(\l)\phi_{ac},f)d\l=\int_\Delta \sum_j \tilde\phi_j(\l)(\E'(\l)g_j,f)d\l\\
=&\int_\Delta \sum_j (\E'(\l)g_j,\phi)(\E'(\l)g_j,f)d\l,\quad \phi,f\in C_0^\infty(\mathring{J}\cup \mathring{E}),
\end{split}
\ee
where $\tilde\phi=\U_{ac}\phi$.
Therefore, the kernel of $\E_{ac}'(\l)$ can be written in the form
\be\label{resol}
E_{ac}'(x,y;\l)=\sum_{j=1}^n Q_j(x;\l)Q_j(y;\l),\ |\l|\in (0,1].
\ee
The above equality is understood in the weak sense, i.e. given any $\phi,f\in C_0^\infty(\mathring{J}\cup \mathring{E})$, one has
\be\label{resol-weak}
\int_U\int_U E_{ac}'(x,y;\l)\phi(x)f(y)dxdy=\sum_{j=1}^n \int_U Q_j(x;\l)\phi(x)dx\int_U Q_j(y;\l)f(y)dy,\ 
|\l|\in (0,1].
\ee 
The left side of \eqref{resol-weak} is a $C^\infty([-1,1]\setminus\{0\})$ function by Proposition~\ref{prop: smoothness of res of id}. 
The right side is in $L^1([-1,1])$, because $\Q_j:\,L^2(U)\to L^2([-1,1])$, $1\leq j\leq n$, are all bounded.

As is well-known, the decomposition in \eqref{resol} is not unique. Let $V(\l)$ be an orthogonal matrix-valued function. Set  $\vec Q^{(1)}(x;\l):=V(\l)\vec Q(x;\l)$, where $\vec Q(x;\l)=(Q_1(x;\l),\dots,Q_n(x;\l))^t$. Clearly, \eqref{resol} does not change its form if $\vec Q$ is replaced with $\vec Q^{(1)}$:
\be\label{resol-1}
E_{ac}'(x,y;\l)=\sum_{j=1}^n Q_j^{(1)}(x;\l)Q_j^{(1)}(y;\l),\ |\l|\in (0,1].
\ee
Let $V:\,L^2([-1,1],\R^n)\to L^2([-1,1],\R^n)$ be the operator of multiplication with $V(\l)$. Clearly, $V$ is unitary. Define
\be\label{isometry-part-alt}\begin{split}
&\U_{ac}^{(1)} f:=V \U_{ac}f,\\ 
&(\U_{ac}^{(1)} f)(\l)=\left(\int_U Q_1^{(1)}(x;\l) f(x)dx,\dots,\int_U Q_n^{(1)}(x;\l) f(x)dx\right):\ L^2(U)\to L^2([-1,1],\R^n).
\end{split}
\ee
By construction, $\U_{ac}\mathscr{K}_{ac} \U_{ac}^\dagger \tilde f=\l\tilde f$ for any $\tilde f\in L^2([-1,1],\R^n)$. Therefore 
\be\label{diag-remains}
\U_{ac}^{(1)}\mathscr{K}_{ac} (\U_{ac}^{(1)})^\dagger 
=V\U_{ac}\mathscr{K}_{ac} \U_{ac}^\dagger V^\dagger=V\l V^\dagger=\l 
\ee
on $L^2([-1,1],\R^n)$. Thus, by appending an appropriate projector onto $H_p$ (the subspace of discontinuity with respect to $\mathscr{K}$) to $\U_{ac}^{(1)}$, we obtain a unitary transformation $\U^{(1)}$ that also canonically diagonalizes $\mathscr{K}$. Our argument proves statement (2) of Theorem~\ref{thm:main}.

\subsection{Piecewise smooth diagonalization of $\mathscr{K}$}\label{sec:diagK}

For another set of functions $h_j\in C_0^\infty(\mathring{J}\cup\mathring{E})$, $1\leq j\leq n$, let $h_{ac}^j$ be the orthogonal projection of $h_j$ onto $H_{ac}$. By \cite[eq. (5.11), p. 355]{Kato},
\be\label{diff Epr form}
\int_{-1}^{1}\left|(\E'(\l) g_j,g_k)-(\E'(\l) h_{ac}^j,h_{ac}^k)\right|d\l
\leq \Vert g_j\Vert \Vert g_k-h_{ac}^k\Vert+\Vert g_j-h_{ac}^j\Vert\Vert g_k\Vert.
\ee
Similarly to \eqref{S-fn}, denote $S_{\vec h}(\l)=(\E_{ac}(\l) h_j,h_k)_{j,k=1}^n$.
Since $\Vert g_j-h_{ac}^j\Vert\leq \Vert g_j-h_j\Vert$ and $\det(S_{\vec h}(\l))$ is analytic in $\l$ by Proposition~\ref{prop: smoothness of res of id}, we can find $h_j\in C_0^\infty(\mathring{J}\cup\mathring{E})$ with $\Vert h_j-g_j\Vert\ll 1$, $1\leq j\leq n$, such that $\det\left(S_{\vec h}'(\l)\right)>0$ except at a countable collection of $\l$, which may accumulate at $\l=0$.

Let $\Lambda$ be the open interval between any two consecutive roots (either positive or negative) of $\det\left(S_{\vec h}'(\l)\right)$. Combined with \cite[Lemma~12.1.6]{MSch02}, this implies that there exists a Cholesky factor $\hat Q^{(1)}(\l)$ of $S_{\vec h}'(\l)$ so that $\hat Q^{(1)}(\l)$ is a smooth matrix-valued function of $\l\in\Lambda$, and $\det (\hat Q^{(1)}(\l))>0$, $\l\in\Lambda$. To be precise, \cite[Lemma~12.1.6]{MSch02} establishes only continuity, but the same proof can be modified to show that $\hat Q^{(1)}(\l)$ is differentiable any number of times as long as $S_{\vec h}'(\l)$ is smooth and nondegenerate.

By \eqref{resol-weak}, $\hat Q(\l)$, where 
\be\label{Qphi-appl}
\hat Q(\l):=\Q(\l)\vec h,\ \Q(\l):=(\Q_1(\l),\dots,\Q_n(\l))^t,\ \vec h:=(h_1,\dots,h_n),\ |\l|\in(0,1],
\ee
(cf. \eqref{isometry-part} for the definition of the operators $\Q_j$), is also a Cholesky factor of $S_{\vec h}'(\l)$. By replacing $Q_1(x;\l)$ with $-Q_1(x;\l)$ for $\l\in\Lambda$ wherever necessary, we can ensure that $\det (\hat Q(\l))>0$ for all $\l\in\Lambda$. Therefore
\be\label{two-chols}\begin{split}
S_{\vec h}'(\l)=(\hat Q^{(1)})^t(\l)\hat Q^{(1)}(\l)=\hat Q^t(\l)\hat Q(\l),\ \l\in\Lambda.
\end{split}
\ee

\bl\label{lem:can_chol} 
If $\det \hat M\not=0$, and $\hat Q_1$ and $\hat Q_2$ are two Cholesky factors of $\hat M$, then 
$\hat Q_2=V \hat Q_1$ for some orthogonal matrix $V$.
\el
\noindent
The proof is immediate, because $V=\hat Q_2\hat Q_1^{-1}$ satisfies $V^tV=\1$.

By the Lemma, $\hat Q^{(1)}(\l)=V(\l)\hat Q(\l)$ for an orthogonal matrix-valued function $V(\l)$, $\l\in\Lambda$. Clearly, $V(\l)\in SO(n)$ (i.e., $\det(V(\l))\equiv1$), because the determinants of $\hat Q^{(1)}(\l)$ and $\hat Q(\l)$ are positive. Define a new kernel by $\vec Q^{(1)}(x;\l):=V(\l)\vec Q(x;\l)$. Then, by \eqref{Qphi-appl},
\be\label{two-Qs}
\Q^{(1)}(\l)\vec h=V(\l)\Q(\l)\vec h=V(\l)\hat Q(\l)=\hat Q^{(1)}(\l).
\ee
Recall that $\Q$ and $\Q^{(1)}$ are column-vectors consisting of the operators $\Q_j$ and $\Q_j^{(1)}$, respectively (cf. \eqref{Qphi-appl}).
As mentioned above (see \eqref{resol-1} and the text preceding it), the kernel $\vec Q^{(1)}(x;\l)$ of $\Q^{(1)}$ satisfies \eqref{resol-1} for $\l\in\Lambda$. 

Consider the equations
\be\label{smoothness-2}
(\E_{ac}'(\l)h_m,f)
=\sum_{j=1}^n (\Q_j^{(1)} h_m)(\l)(\Q_j^{(1)}f)(\l),\ m=1,\dots, n,
\ee
for any $f\in C_0^\infty(\mathring{J}\cup \mathring{E})$ and $\l\in\Lambda$. We solve these equations for $(\Q_j^{(1)}f)(\l)$, $j=1,\dots, n$, which can be done because the matrix of the system is $\hat Q^{(1)}(\l)$ (cf. \eqref{two-Qs}). The latter is nondegenerate by construction. By the Schwartz kernel Theorem \cite[Theorem 5.2.1]{hor1}, $Q_j^{(1)}\in C^{\infty}\left((\mathring{J}\cup \mathring{E})\times\Lambda\right)$. This claim follows, because $\hat Q^{(1)}\in C^{\infty}(\Lambda)$ by construction.

Repeating the above argument for all intervals $\Lambda\subset (-1,1)\setminus\{0\}$ such that $\det\left(S_{\vec h}'(\l)\right)>0$, $\l\in\Lambda$, we construct the kernel $\vec Q^{(1)}(x;\l)$ in \eqref{resol-1}, which is smooth except for a countable collection of $\l$, which may accumulate at $\l=0$. Next we show that such accumulation does not happen.

In Proposition~\ref{Eprime simple} we find another smooth kernel $L\in C^\infty\left((\mathring{J}\cup \mathring{E})\times (\mathring{J}\cup \mathring{E})\times ((-1,1)\setminus\{0\})\right)$, which is close to $|\l| E'$ pointwise for all $|\l|>0$ sufficiently small (in this case $E'\equiv E_{ac}'$) and admits an expansion similar to \eqref{resol}: 
\be\label{resol-pr}
L(x,y;\l)=\sum_{j=1}^n G_j(x;\l)G_j(y;\l),\,  G_j\in C^\infty\left((\mathring{J}\cup \mathring{E})\times ((-1,1)\setminus\{0\})\right),\, x,y\in \mathring{J}\cup \mathring{E},\, |\l|\in (0,1).
\ee

\bl \label{lem:le_appr} Given any interval $I:=[x_l,x_r]\subset \mathring{J}\cup \mathring{E}$, one has 
\be\label{prox}
\max_{x,y\in I}\left||\l|E'(x,y;\l)-L(x,y;\l)\right|=\BigO{\ka^{-1}},\ \ka=-\ln\left|{\l}/{2}\right|\to\infty.
\ee
Also, there exist $\delta,c,\l_0>0$ such that for any $\l'$, $0 <|\l'|< \l_0$, one can find $x_k(\l)\in I$, $1\leq k\leq n$, which are all smooth in a neighborhood of $\l'$, and 
\be\label{det-cond}
\det\left(\hat L(\l)\right)>\delta,\ \Vert \hat L(\l)\Vert\leq c,\ 0 <|\l|\leq \l_0,
\ee
where $\hat L(\l):=[L(x_m(\l),x_k(\l);\l)]_{m,k=1}^n$.
\el
\begin{proof} The property \eqref{prox} is proven in Proposition~\ref{Eprime simple}, see \eqref{Eprime approx}. Now we prove \eqref{det-cond}. 
By Proposition~\ref{Eprime simple}, $G_j(x;\l)$ can be written in the form: 
\be\label{appr-fns}
    G_j(x;\l)=\A_j^\pm(x)\cos(\ka \gg_{\text{im}}(x)+c_j),\ 1\leq j\leq n,\ \gg_{\text{im}}(x):=\Im(\gg(x_+)),
    \ x\in I,
\ee
for some real-valued functions $\A_j^\pm$ and real constants $c_j$, where the functions $\A_j^\pm(x)$ are analytic in a neighborhood of $I$ and linearly independent. The signs `$+$' and `$-$' in the superscript are taken if $\l>0$ and $\l<0$, respectively. Using \eqref{h prop}, \eqref{f prop} we have
\be\label{Ac coefs}
\A_j^\pm(x)=A_j(x)\begin{cases} s(j),& x\in E,\\ 
\mathrm{sgn}(\l),& x\in J, \end{cases}\quad
c_j=s(j)\frac\pi4.
\ee
Since $\A_j^\pm(x)$ are linearly independent (as analytic functions), we can find $n$ points $\check x_k\in \mathring{I}$ so that the vectors $(\A_j^\pm(\check x_1),\dots,\A_j^\pm(\check x_n))$, $1\leq j\leq n$, are linearly independent. Here and below, the cases $\l>0$ and $\l<0$ are treated separately.

Find $c_0$ so that $\cos(c_0+c_j)\not=0$ for any $j$. Define $N_k(\ka):=\lfloor (\ka \gg_{\text{im}}(\check x_k)-c_0)/(2\pi)\rfloor$ and find $x_k(\l)\in I$ by solving $\gg_{\text{im}}(x)=(c_0+2\pi N_k(\ka))/\ka$. Then 
\be\label{g-root finding}
\gg_{\text{im}}'(\check x_k)\Delta x+O(\Delta x^2)=\frac{2\pi \left[N_k(\ka)-(\ka\gg_{\text{im}}(\check x_k)-c_0)/(2\pi)\right]}{\ka}=\frac{O(1)}\ka,
\ee
where $\Delta x=x_k(\l)-\check x_k$. The magnitude of the $O(1)$ term in \eqref{g-root finding} does not exceed $2\pi$, so
\be\label{delta x est}
|\Delta x|\lesssim \frac{2\pi}{\min_{x\in I}|\gg_{\text{im}}'(x)|}\frac1\ka.
\ee
Since $I$ is at a positive distance from the endpoints of $J$ and $E$, statements (3) and (4) of Proposition~\ref{prop: g-function} imply that the denominator in \eqref{delta x est} is positive. Since $\check x_k\in \mathring{I}$ are all fixed, find $\ka_0=-\ln(\l_0/2)$ large enough so that $x_k(\l)\in I$, $1\leq k\leq n$, if $|\l|\geq\l_0$. 
By construction, $x_k(\l)$ are piecewise smooth functions of $\l$, and they satisfy $\ka \gg_{\text{im}}(x_k(\l))\equiv c_0\mod 2\pi$ and $x_k(\l)\to \check x_k$, $\l\to 0$, $1\leq k\leq n$. 
Clearly, 
\be\label{appr_fns}
(G_j(x_k(\l);\l)_{j,k=1}^n=\left(\A_j^\pm(x_k(\l))\cos(c_0+c_j)\right)_{j,k=1}^n \to
\left(\A_j^\pm(\check x_k)\cos(c_0+c_j)\right)_{j,k=1}^n \text{ as }\l\to0,
\ee
and the last matrix above is non-degenerate by construction. Decreasing $\l_0>0$ even further if necessary, all the inequalities in \eqref{det-cond} follow from \eqref{resol-pr} and the definition of $\hat L(\l)$ in Lemma~\ref{lem:le_appr}.

The $x_k(\l)\in I$ are smooth at $\l'$ if $\ka'=-\ln|\l'/2|$ is not a jump point of $N_k(\ka)$ for all $1\leq k\leq n$. Suppose $N_k(\ka)$ is discontinuous at $\ka'$ for some $k$. At each such point the value of the jump equals $1$ (if $\gg_{\text{im}}(\check x_k)>0$) or $-1$ (if $\gg_{\text{im}}(\check x_k)<0$).
If  $\gg_{\text{im}}(\check x_k)=0$, $N_k(\ka)$ is constant. To avoid the discontinuity at $\ka'$, we redefine $N_k(\ka)$ by keeping it constant $N_k(\ka)=N_k(\ka'-0)$, $\ka\geq\ka'$, until the next discontinuity, where it jumps by 2 or -2. The rest of $N_k(\ka)$ is unchanged.
If several $N_k(\ka)$ are discontinuous at $\ka'$, each one is modified the same way. If a modification is performed, $O(1)$ in \eqref{g-root finding} is bounded by $4\pi$, so the modified $x_k(\l)$ still satisfy \eqref{det-cond}. 
\end{proof}

The following Corollary follows immediately from Lemma~\ref{lem:le_appr}.

\bc \label{cor:he-nondeg} Given any interval $I:=[x_l,x_r]\subset \mathring{J}\cup \mathring{E}$, there exist $\tilde \delta,\tilde c,\tilde \l_0>0$, such that for any $\l'$, $0 <|\l'|<\tilde \l_0$, 
\be\label{det-cond-E}
\det\left(\hat E'(\l)\right)>\tilde \delta,\ \Vert \hat E'(\l)\Vert\leq \tilde c,\ |\l|\in(0,\tilde \l_0],
\ee
where $\hat E'(\l):=(|\l|E'(x_m(\l),x_k(\l);\l)_{m,k=1}^n)$, and $x_k(\l)\in I$ are the same as in Lemma~\ref{lem:le_appr}.
%
\ec

Note that the factor $|\l|$ is included in the definition of $\hat E'(\l)$. 
By Corollary~\ref{cor:he-nondeg} and \cite[Lemma~12.1.6]{MSch02}, there exists a Cholesky factor $\hat Q^{(2)}(\l)$ of $|\l|^{-1}\hat E'(\l)$, $|\l|\in(0,\tilde \l_0]$, so that $\hat Q^{(2)}(\l)$ is a piecewise smooth matrix-valued function of $\l$, and $\det (\hat Q^{(2)}(\l))>0$. The factor is  smooth wherever $x_k(\l)$ are all smooth and, therefore, $\hat E'(\l)$ is smooth. 

Pick any $\l'$, $|\l'|\in (0,\tilde\l_0]$, where $\vec Q^{(1)}(x;\l)$ constructed above (see \eqref{smoothness-2} and the text that follows it) is not smooth. By Corollary~\ref{cor:he-nondeg}, we can find $x_k(\l)\in I$, $1\leq k\leq n$, which are smooth in $(\l'-\e,\l'+\e)$ for some small $\e>0$. By looking at the matrix $|\l|^{-1}\hat E'(\l)$ and arguing similarly to \eqref{two-chols}--\eqref{smoothness-2}, we get the kernel $\vec Q^{(2)}(x;\l)$, which is smooth in this interval. 

Consider the interval $(\l'-\e,\l')$. On this interval we have two kernels, which are related as follows $\vec Q^{(1)}(x;\l)=V_-(\l)\vec Q^{(2)}(x;\l)$, $\l\in(\l'-\e,\l')$, for some $SO(n)$-valued function $V_-(\l)$. This follows, because
\be\label{Q demonstr I}\begin{split}
&E'(x_m(\l),x_k(\l);\l)=\sum_{j=1}^n Q_j^{(1)}(x_m(\l);\l)Q_j^{(1)}(x_k(\l);\l)=\sum_{j=1}^n Q_j^{(2)}(x_m(\l);\l)Q_j^{(2)}(x_k(\l);\l),\\
&E'(x_m(\l),x;\l)=\sum_{j=1}^n Q_j^{(1)}(x_m(\l);\l)Q_j^{(1)}(x;\l)=\sum_{j=1}^n Q_j^{(2)}(x_m(\l);\l)Q_j^{(2)}(x;\l),\\
&1\leq k,m\leq n,\ x\in \mathring{J}\cup \mathring{E},\ \l\in(\l'-\e,\l').
\end{split}
\ee
From the first line we obtain that the matrices $(Q_j^{(1)}(x_m(\l);\l))$ and $(Q_j^{(2)}(x_m(\l);\l))$ are non-degenerate and are, therefore, related by some $V_-(\l)$. As before, we can ensure that the determinant of the second matrix is positive (the determinant of the first matrix is positive by construction).
Solving the two equations on the second line for $Q_j^{(1)}(x;\l)$ and $Q_j^{(2)}(x;\l)$ gives the desired relation.

Since $V_-(\l)$ can be changed arbitrarily, we replace it with a smooth $SO(n)$-valued function $\tilde V(\l)$, which equals $\1$ on $(\l'-\e/3,\l')$ and equals $V_-(\l)$ on $(\l'-\e,\l'-2\e/3)$. Here we use that $SO(n)$ is connected. The same trick can be applied in $(\l',\l'+\e)$, which gives another $SO(n)$-valued function $V_+(\l)$ defined on $(\l',\l'+\e)$. Smoothly modifying it as well in a similar fashion, we get a smooth function $\tilde V(\l)$ defined on  $(\l'-\e,\l'+\e)$ and a kernel $\vec Q^{(0)}(x;\l)=\tilde V(\l)\vec Q^{(2)}(x;\l)$, which is smooth across $\l'$ and smoothly transitions to $\vec Q^{(1)}(x;\l)$ on either side of $\l'$. Thus, all the discontinuities of $\vec Q^{(1)}(x;\l)$ sufficiently close to $\l=0$ can be eliminated. 

In what follows, the obtained kernel will be denoted $\vec Q^{(0)}(x;\l)$. By construction, $Q_j^{(0)}\in\linebreak C^{\infty}\left((\mathring{J}\cup\mathring{E})\times \left((-1,1)\setminus\Xi\right)\right)$, $1\leq j\leq n$, where $\Xi$ is a finite set of points. Differentiating \eqref{resol} with $\vec Q=\vec Q^{(0)}$ with respect to $x$ and $y$ and setting $x=y$ gives
\be\label{resol diff}
\frac{\partial^n}{\partial x^n}\frac{\partial^n}{\partial y^n} E_{ac}'(x,y;\l)|_{y=x}=\sum_{j=1}^n \left(\frac{\partial^nQ_j^{(0)}(x;\l)}{\partial x^n} \right)^2,\ |\l|\in (0,1].
\ee
The property \eqref{E phi 3 II} now follows immediately from Proposition~\ref{prop: smoothness of res of id}. Our argument proves statement (3) of Theorem~\ref{thm:main}.

\subsection{Approximating a smooth diagonalization of $\mathscr{K}$}
Here we construct the functions $Q_j$, which have the properties stated in statement (3) of Theorem~\ref{thm:main} and are close to $G_j$. In this subsection we always assume $|\l|\in (0,\tilde\l_0]$. In this subsection we can use any collection $x_k(\l)$, $1\leq k\leq n$, constructed in Lemma~\ref{lem:le_appr}. There is no need to adjust any of the $x_k(\l)$ to ensure its continuity at some $\l'$.

By Lemma~\ref{lem:le_appr}, Corollary~\ref{cor:he-nondeg}, and \cite[Lemma~12.1.6]{MSch02}, there exist Cholesky factors $\hat G^{(1)}(\l)$ and $\hat Q^{(1)}(\l)$ of $\hat L(\l)$ and $\hat E'(\l)$, respectively, so that $\hat G^{(1)}(\l)$ and $\hat Q^{(1)}(\l)$ are piecewise smooth matrix-valued functions of $\l$, and $\det (\hat G^{(1)}(\l)),\det (\hat Q^{(1)}(\l))>0$. Both factors are smooth wherever $x_k(\l)$ are all smooth and, therefore, $\hat L(\l)$ and $\hat E'(\l)$ are smooth. We may assume that both $\hat Q^{(1)}(\l)$ and $\hat G^{(1)}(\l)$ are upper triangular. This can be achieved by applying the appropriate numerical algorithm to $\hat L(\l)$ and $\hat E'(\l)$ (e.g., as in the proof of \cite[Lemma~12.1.6]{MSch02}). The required form of the Cholesky factors guarantees that if $\hat L(\l)$ and $\hat E'(\l)$ are close, then $\hat G^{(1)}(\l)$ and $\hat Q^{(1)}(\l)$ are close as well. Using the same argument as in the previous subsection, these factors lead to the kernels $\vec G^{(1)}(x;\l)=V_G(\l)\vec G(x;\l)$ and $\vec Q^{(1)}(x;\l)=V_Q(\l)\vec Q^{(0)}(x;\l)$ for some $SO(n)$-valued functions $V_G(\l)$, $V_Q(\l)$.

In this subsection we do not divide $\hat E'$ by $|\l|$ when computing its Cholesky factors, so $\hat Q^{(1)}(\l)$ here has an extra factor $|\l|^{1/2}$ compared with an analogous Cholesky factor of the previous subsection.

The kernels $\vec Q^{(1)}(x;\l)$ and $\vec G^{(1)}(x;\l)$ are smooth in $x\in\mathring{J}\cup \mathring{E}$ and piecewise-smooth in $\l$. The kernel $\vec Q^{(0)}(x;\l)$ has at most finitely many jumps in $\l$, and $\vec G(x;\l)$ (cf. \eqref{resol-pr}) is smooth. Additional jumps in $\l$ in $\vec Q^{(1)}(x;\l)$ and $\vec G^{(1)}(x;\l)$ arise due to the discontinuities in $x_k(\l)$, and the latter are reflected in the discontinuities of $V_G(\l)$ and $V_Q(\l)$. 

Let us now formulate a result on stability of the Cholesky factorization. Let $\hat L$ be a positive definite symmetric matrix, and $\hat G$ be its upper triangular Cholesky factor. Let $\Delta \hat L$ and be a symmetric perturbation of $\hat L$, and $\Delta \hat G$ be the corresponding upper triangular perturbation of $\hat G$. Suppose
\be\label{chol-pert}
\e:=\frac{\Vert \Delta \hat L\Vert_F}{\Vert \hat L\Vert_2},\ \kappa_2:=\Vert \hat L\Vert_2\Vert \hat L^{-1}\Vert_2,\ \kappa_2\e <1.
\ee
Here $\Vert \cdot \Vert_{F}$ denotes the Frobenius norm of a matrix, and $\Vert \cdot \Vert_2$ is the matrix 2-norm. Then \cite{sun91, ste93}
\be\label{chol-pert-bnd}
\frac{\Vert \Delta \hat G\Vert_F}{\Vert \hat G\Vert_2}\leq \frac1{\sqrt2}\kappa_2\e+\BigO{\e}^2.
\ee
In what follows we do not specify which matrix norm is used, since they are all equivalent for our purposes. By \eqref{prox} and \eqref{det-cond}, \eqref{chol-pert} is satisfied, where $\Delta \hat L(\l):=\hat E'(\l)-\hat L(\l)$ and $\e=\BigO{\varkappa^{-1}}$. Equation \eqref{chol-pert-bnd} gives
\be\label{pert-bnd}
\frac{\Vert \hat Q(\l) - \hat G(\l)\Vert}{\Vert \hat G(\l)\Vert} = \BigO{\varkappa^{-1}}.
\ee
Consequently, \eqref{prox}, \eqref{det-cond}, and \eqref{pert-bnd} imply
\be\label{invm-bnd}
\Vert \hat Q^{-1}(\l) - \hat G^{-1}(\l)\Vert = \BigO{\varkappa^{-1}}.
\ee

For any $x\in \mathring{J}\cup \mathring{E}$, consider two systems of equations
\be\label{resol-pr-2}\begin{split}
L(x_k(\l),x;\l)=&\sum_{j=1}^n G_{j}^{(1)}(x_k(\l);\l)G_{j}^{(1)}(x;\l),\  1\leq k\leq n,\\ 
|\l|E'(x_k(\l),x;\l)=&\sum_{j=1}^n |\l|^{1/2}Q_{j}^{(1)}(x_k(\l);\l)|\l|^{1/2}Q_{j}^{(1)}(x;\l),\ 1\leq k\leq n,
\end{split}
\ee
which are solved for the vectors $\vec G^{(1)}(x;\l)$ and $|\l|^{1/2}\vec Q^{(1)}(x;\l)$, respectively. The matrices of the two systems are $\hat G^{(1)}(\l)$ and $\hat Q^{(1)}(\l)$, respectively. This follows similarly to \eqref{two-Qs}. Combining \eqref{prox} and \eqref{invm-bnd} gives the desired bound 
\be\label{final-bnd}
\Vert \vec G^{(1)}(x;\l)-|\l|^{1/2}\vec Q^{(1)}(x;\l)\Vert = \BigO{\varkappa^{-1}},
\ee
which is uniform with respect to $x$ in compact subsets of $\mathring{J}\cup \mathring{E}$.

Recall that left multiplication with $V_G^t(\l)$ converts $\vec G^{(1)}(x;\l)$ back into the original kernel $\vec G(x;\l)$ of \eqref{resol-pr}: $\vec G(x;\l)=V_G^t(\l)\vec G^{(1)}(x;\l)$. Define $\vec Q^{(2)}(x;\l):=V_G^t(\l)\vec Q^{(1)}(x;\l)$. 
Multiplication with $V_G^t(\l)$ does not affect \eqref{final-bnd}, which therefore holds for the pair $\vec G(x;\l)$, $\vec Q^{(2)}(x;\l)$. As was mentioned above, $V_G^t(\l)$ is smooth unless one or more of $x_k(\l)$ have a jump, and $\vec Q^{(1)}(x;\l)$ is smooth in $x\in\mathring{J}\cup \mathring{E}$ and piecewise-smooth in $\l$. Therefore, $\vec Q^{(2)}(x;\l)$ has the same properties. 

Finally, we modify $\vec Q^{(2)}(x;\l)$ slightly, so that it still satisfies \eqref{resol}, remains close to $\vec G(x;\l)$, but is smooth in $\l$ for $\l$ close to zero. By construction, $\vec Q^{(2)}(x;\l)=V(\l)\vec Q^{(0)}(x;\l)$, where $V(\l)=V_G^t(\l)V_Q(\l)$, and $\Vert \vec G(x;\l)-V(\l)|\l|^{1/2}\vec Q^{(0)}(x;\l)\Vert = \BigO{\varkappa^{-1}}$. Both $\vec G(x;\l)$ and $\vec Q^{(0)}(x;\l)$ are smooth (recall that $|\l|\in (0,\tilde\l_0]$). Also, by Lemma~\ref{lem:le_appr} and Corollary~\ref{cor:he-nondeg}, the corresponding determinants $\det (\hat G(\l))$ and $\det (\hat Q^{(0)}(\l))$ are bounded away from zero as $\l\to0$. Let $\l_m$ be the points where $V(\l)$ is discontinuous. Then, $\Vert V(\l_m+0)-V(\l_m-0)\Vert=\BigO{\varkappa_m^{-1}}$ as $\l_m\to0$. Since $SO(n)$ is connected, we can find a small $\e_m>0$ neighborhood around each $\l_m$ and a smooth path $V_{final}(\l)\in SO(n)$ that connects $V(\l_m-\e_m)$ with $V(\l_m+\e_m)$ without deviating from them far. In other words,
\be\label{Vfinal}\begin{split}
&V_{final}(\l)=\1,\ |\l|\in[\tilde \l_0+\e_0,1];\\
&V_{final}(\l)=V(\l),\ |\l|\in(0,\tilde \l_0-\e_0],\, \l\not\in\cup_m(\l_m-\e_m,\l_m+\e_m);\\
&V_{final}\in C^\infty\left((-1,1)\setminus\{0\}\right);\ \Vert V_{final}(\l)-V(\l)\Vert=\BigO{\varkappa^{-1}},\l\to0.
\end{split}
\ee
It is now clear that the kernel $\vec Q_{final}(x;\l)=V_{final}(\l)\vec Q^{(0)}(x;\l)$ has all the required properties. 
Our argument proves statement (4) of Theorem~\ref{thm:main}. 
 
Our argument establishes that the operator with the kernel $|\l|^{-1/2}\vec G(x;\lambda)$ to leading order as $\l\to0$ diagonalizes $\mathscr{K}$. Similarly, Corollary~\ref{cor:main} and equation \eqref{Eac h f} imply that approximate diagonalization of $A^\dagger A$ and $AA^\dagger$ is achieved by the operators with the kernels $|\l|^{-1/2}(\hat{f}_1(x;\lambda),\dots,\hat{f}_n(x;\lambda))$ and $|\l|^{-1/2}(\hat{h}_1(x;\lambda),\dots,\hat{h}_n(x;\lambda))$, respectively.

\subsection{Degree of ill-posedness of inverting $\mathscr{K}$}
The discussion in this section allows us to estimate the degree of ill-posedness of inverting $\mathscr{K}$. Consider the equation $\mathscr{K} f=\phi$, where $f,\phi\in L^2(U)$, and $\phi$ is in the range of $\mathscr{K}$. Pick any $x_0\in \mathring{J}\cup \mathring{E}$ and suppose $\text{sing\,supp}(f)=\{x_0\}$. Our goal is to estimate how unstable it is to reconstruct the singularity of $f$ at $x_0$. Thus, we can assume without loss of generality that $\text{supp}(f)\subset \mathring{J}\cup \mathring{E}$.
To simplify notation, the kernel $\vec Q_{final}(x;\l)$ is denoted $\vec Q(x;\l)$ in this subsection.

Pick any $\delta\in(0,1)$. By \eqref{resol}, converting the equation to the spectral domain, i.e. applying the operator $\U_{ac}$ of \eqref{isometry}, and then using the first line in \eqref{diag-two-comps} gives
\be\label{solution}
\l \tilde f_j(\l)=\tilde \phi_j(\l),\  \tilde f_j(\l)=\l^{-1}\tilde \phi_j(\l),\ 1\leq j\leq n,\ |\l|\in (0,1],
\ee
where 
\be\label{fourier_tr}\begin{split}
\tilde f_j(\l)=&\int_U Q_j(x;\l)f(x)dx,\ \tilde \phi_j(\l)=\int_U Q_j(x;\l)\phi(x)dx,\ 1\leq j\leq n,\ |\l|\in (0,1],\\ 
f(x)=&\sum_{j=1}^n \int_{-\delta}^{\delta} \l^{-1}Q_j(x;\l)\tilde \phi_j(\l)d\l + f_{sm}(x),\ x\in \mathring{J}\cup \mathring{E}.
\end{split}
\ee
The term $f_{sm}$ can be written in the form
\be\label{smooth part}\begin{split}
f_{sm}(x)=&\sum_{j=1}^n \int_{|\l|\in [\delta,1]} Q_j(x;\l)\tilde f_j(\l)d\l + f_p(x),\ x\in \mathring{J}\cup \mathring{E},
\end{split}
\ee
where $f_p$ is the projection of $f$ onto $H_p$. Since $\text{supp}(f)\subset \mathring{J}\cup \mathring{E}$, \eqref{E phi 3 II} implies $\max_{j,|\l|\in [\delta,1]}|\tilde f_j(\l)|<\infty$. By assertion (1) of Theorem~\ref{thm:main}, $f_p\in C^\infty(\mathring{J}\cup \mathring{E})$. Therefore, $f_{sm}\in C^\infty(\mathring{J}\cup \mathring{E})$ by \eqref{E phi 3 II}. Clearly, the operator that reconstructs $f_{sm}$ from $\phi$ is stable (i.e., bounded between the $L^2$ spaces). Hence the instability of reconstruction comes from a neighborhood of $\l=0$.

By Proposition~\ref{Eprime simple}, \eqref{appr-fns}, \eqref{solution}, and \eqref{fourier_tr}, a leading order reconstruction of $f$ from $\phi$ becomes 
\be\label{lead_order}\begin{split}
    f(x)\sim & \sum_{j=1}^n \int_{-\delta}^{\delta} \l^{-1}|\l|^{-1/2}A_j^\pm(x)\cos((-\ln|\l/2|) \gg_{\text{im}}(x)+c_j)\tilde \phi_j(\l)d\l.
\end{split}
\ee
The sign `$\sim$' here
means that the difference between the left- and right-hand sides is smoother than $f$ whenever $f$ is of a limited smoothness. 

Next, we change variables $\l\to\ka=-\ln|\l/2|$ in \eqref{lead_order}. To preserve the $L^2$-norm, we need to multiply $\tilde \phi_j$ by $|\l|^{1/2}$. Setting $\tilde \Phi_j^\pm(\ka):=|\l|^{1/2}\tilde \phi_j(\pm|\l|)$, we have
\be\label{two norms}
\int_{\mathbb R}\tilde \phi_j^2(\l)d\l=\int_0^\infty \left(\tilde \Phi_j^+(\ka)\right)^2d\ka+\int_0^\infty \left(\tilde \Phi_j^-(\ka)\right)^2d\ka,
\ee
and \eqref{lead_order} becomes
\be\label{lead_order_v2}\begin{split}
&f(x)\sim \frac12\sum_{j=1}^n \int^{\infty}_{-\ln(\delta/2)} e^{\ka}\cos(\ka \gg_{\text{im}}(x)+c_j)\left[A_j^+(x)\tilde \Phi_j^+(\ka)-A_j^-(x)\tilde \Phi_j^-(\ka)\right]d\ka.
\end{split}
\ee
Finally, we linearize $\gg_{\text{im}}(x)$ near $x=x_0$: $\gg_{\text{im}}(x)\sim \gg_{\text{im}}(x_0)+\gg_{\text{im}}'(x_0)\Delta x$. This implies that to reconstruct $f$ to leading order near $x_0$ involves computing integrals of the form
\be\begin{split}
    \int^{\infty}_{*} e^{\ka}{\begin{Bmatrix}\cos\\ \sin
    \end{Bmatrix}}(\gg_{\text{im}}'(x_0)\Delta x\ka)\tilde G_k(\ka)d\ka,
\end{split}
\ee
where $\tilde G_k(\ka)$ are obtained from $\tilde \Phi_k^\pm(\ka)$ via linear combinations with bounded coefficients depending on {$\ka$ and} \blue{$x_0$}. Consequently, the degree of ill-posedness is $\exp\left(\ka/|\gg_{\text{im}}'(x_0)|\right)$. This means that
\begin{enumerate}
    \item the inversion of $\mathscr{K}$ is exponentially unstable,
    \item the degree of instability of inverting $\mathscr{K}$ is spatially variant,
    \item the smaller the value of $|\gg_{\text{im}}'(x_0)|$, the more unstable the reconstruction of $f$ at this point.
\end{enumerate}

The same argument allows us to estimate the degree of ill-posedness of inverting the operators $AA^\dagger$ and $A^\dagger A$, and we obtain qualitatively similar conclusions. {Indeed, by Corollary~\ref{cor:main}, the operators that diagonalize the absolutely continuous parts of $AA^\dagger$, $A^\dagger A$, are obtained by the appropriate truncation of $\U_{ac}$. So the above argument carries over in an essentially the same way. The differences are that we have to divide by $\l^2$ rather than by $\l$, and the spectral interval is $[0,1]$. Hence the degree of ill-posedness becomes $\exp\left(2\ka/|\gg_{\text{im}}'(x_0)|\right)$.}

 

\appendix

\section{Hypergeometric Parametrix}

In \cite{BBKT19}, the authors solve RHP \ref{RHPGamma} when $E=[-a,0], J=[0,a]$, where $a>0$, explicitly in terms of hypergeometric functions for $\lambda\in\mathbb{C}\setminus[-1,1]$.  For $\lambda\in[-1,1]$, the solution of RHP \ref{RHPGamma} is not unique but two important solutions can be obtained via continuation in $\lambda$ from $\mathbb{C}\setminus[-1,1]$ onto $[-1,1]$ from the upper or lower half plane.  We call this explicit solution $\Gamma_4(z;\lambda)$ and refer the reader to \cite[eq. (17)]{BBKT19} for the exact expression.  Moreover, the small-$\lambda$ asymptotics of $\Gamma_4(z;\lambda)$ was obtained in various regions of the complex $z$-plane.  We briefly summarize these results, as they will be key components in the parametrices at the double points $b_k$, $k=1,\dots,n$, see \eqref{ParamIeLeft}, \eqref{ParamIeRight}.

\subsection{Model solution for the two interval problem with no gap}\label{sect-modsol2}
The model RHP for the two symmetrical intervals problem with no gap, considered here, has the jump matrix $-i\s_1$ on the interval $(-a,0)$ and 
the jump matrix $i\s_1$ on the interval $(0,a)$. 
Because of the discontinuity of the jump matrix at $z=0$, we expect solutions to have $\BigO{z^{-\frac{1}{2}}}$ behavior at $z=0$. 

Since the jump matrices $\pm i\sigma_1$ at any $z\in(-a,0)\cup(0,a)$ commute with each other, a solution to this RHP can be written as
\be\label{RHP_4_model0}
\tilde\Phi(z)=\le(\frac{(z-a)(z+a)}{z^2}\ri)^{\frac{\s_1}{4}}.
\ee
However, because of the $\BigO{z^{-\frac{1}{2}}}$ behavior near $z=0$, this solution of the model RHP is not unique. In fact, the most general solution has the form
\be\label{RHP_4_model1}
\Phi(z)=\le(\1+ \frac Az\ri)\le(\frac{(z-a)(z+a)}{z^2}\ri)^{\frac{\s_1}{4}},
\ee
where
\be\label{A-mod_4}
A= \le[
\begin{array}{cc}
x & -x\\
y & -y
\end{array}
\ri]
\ee
with $x,y\in\C$ being arbitrary constants. It is shown in Theorem \ref{GammaAsmptotics} that we should take $x=y=\frac{i}{2}$ when $\Im[\lambda]\geq0$ and $x=y=-\frac{i}{2}$ when $\Im[\lambda]\leq0$, so the model solution $\Psi_4(z;\varkappa)$ that will be using is
\be\label{Psi_4}
\Psi_4(z;\varkappa)=\begin{cases}
\le(\1+ \frac {iaB}{2z}\ri)\le(\frac{(z-a)(z+a)}{z^2}\ri)^{\frac{\s_1}{4}}, & \text{ for } \Im\varkappa\leq0, \\
\sigma_1\le(\1+ \frac {iaB}{2z}\ri)\le(\frac{(z-a)(z+a)}{z^2}\ri)^{\frac{\s_1}{4}}\sigma_1, &\text{ for } \Im\varkappa\geq0,
\end{cases}
\ee
where $B=\s_3-i\s_2$ and equality in $\geq,\leq$ is to be understood as continuation from the upper, lower half plane, respectively.

One can also obtain the model solution \eqref{Psi_4} following the method, described in Section \ref{ssect-sol_mod_gen}. According to Lemma \ref{lem-mod-sol},
a solution to the model problem can be written as 
\be\label{RHP_4_model2}
\Phi_4(z;\varkappa)=e^{\tgg_4(\infty;\varkappa)\s_3}\le(\frac{z+a}{z-a}\ri)^{\frac{\s_1}{4}}e^{-\tgg_4(z;\varkappa)\s_3}
\ee
where $\tgg_4(z;\varkappa)$ solves the scalar RHP
\begin{align*}
	\tgg_4(z_+;\varkappa) + \tgg_4(z_-;\varkappa)&=0, & & z\in(-a,0), \\
	\tgg_4(z_+;\varkappa) + \tgg_4(z_-;\varkappa)&=i\pi\cdot\text{sgn}(\Im\varkappa), & & z\in(0,a).
\end{align*}
Then 
\be\label{tg4}
\tgg_4(z;\varkappa)=\frac{R_2(z)}{2\pi i}\int_0^a\frac{ i\pi \cdot\text{sgn}(\Im\varkappa)}{(\z-z)R_{2+}(\z)}d\z,
\ee
where $R_2:\mathbb{C}\setminus[-a,a]\to\mathbb{C}$ is defined as $R_2(z)=\sqrt{z^2-a^2}$, satisfies $R_{2+}(z)=-R_{2-}(z)$ for $z\in(-a,a)$, and $R_2(z)=z+\BigO{1}$ as $z\to\infty$.

\bp\label{prop-Psi_4}
Solutions \eqref{Psi_4} and \eqref{RHP_4_model2} to the model RHP coincide.
\ep

\begin{proof}
Let us begin with $\Im\varkappa\leq0$.  Any solution to the model problem has form \eqref{RHP_4_model1} with matrix $A$ given  by \eqref{A-mod_4}.
 So, to prove the Proposition, it is sufficient to show that residues at $z=\infty$ of solutions \eqref{RHP_4_model2} and \eqref{Psi_4} coincide. It is easy to see that the residue of \eqref{Psi_4} is 
 \be\label{res_Psi_4}
 \frac {ia}{2}(\s_3-i\s_2).
 \ee
 According to \eqref{tg4},
 \be\label{ass_g_mod4}
 \tgg_4(z;\varkappa)=-\frac{i\pi}{4}-\frac{ia}{2z} +\BigO{z^{-2}}
 \ee
 as $z\ra\infty$.
 Substituting \eqref{ass_g_mod4} and  
 \be
 \le(\frac{z+a}{z-a}\ri)^{\frac{\s_1}{4}}=\le(1+\frac{2a}{z-a}\ri)^{\frac{\s_1}{4}}=\1+\frac{a\s_1}{2z}+\BigO{z^{-2}},
 \ee
 into \eqref{RHP_4_model2}, we obtain 
 \be\label{RHP_4_model2_ass}
\Phi_4(z)=i^{-\frac{\s_3}2}\le( \1+\frac{a\s_1}{2z}+\BigO{z^{-2}}  \ri)i^{\frac{\s_3}2} e^{\frac{ia\s_3}{2z}}(1+\BigO{z^{-2}})
=\1+\frac{ia}{2z}(\s_3-i\s_2)+\BigO{z^{-2}},
\ee
 which, together with \eqref{res_Psi_4}, proves the Proposition for $\Im\varkappa<0$.  Now for $\Im\varkappa\geq0$, we use the fact that $\tgg_4(z;\varkappa)$ is odd in $\varkappa$ and that $\left(\frac{z+a}{z-a}\right)^{\frac{\sigma_1}{4}}$ commutes with $\sigma_1$ to obtain 
\begin{align*}
\Phi_4(z;\varkappa)=\sigma_1\Psi_4(z;-\varkappa)\sigma_1=\Psi_4(z;\varkappa),
\end{align*}
as desired.
\end{proof}

\subsection{Approximation of $\Gamma_4(z;\lambda)$ on an annulus centered at the double point $z=0$}

\begin{figure}
\begin{center}
\begin{tikzpicture}[scale=1.5]

\draw[thick] (0.4,0) arc (0:28:0.4);
\node[scale=1.0] at (0.5,0.12) {$\theta$};

\draw[thick] (0.4,0) arc (0:-28:0.4);
\node[scale=1.0] at (0.53,-0.12) {$-\theta$};

\draw[dashed] (0,0) circle [radius=32pt];
\draw[dashed] (0,0) circle [radius=20pt];

\draw[black,thick] (-3,0) -- (3,0);

\draw[black,thick] (-2,1) -- (2,-1);
\draw[black,thick] (-2,-1) -- (2,1);

\filldraw[black] (0,0) circle [radius=2pt];

\node[below,scale=1.5] at (0,-0.0) {$0$};
\node[scale=1.5] at (0,0.92) {$\Omega$};
\node[scale=1.25] at (-1.75,0.4) {$\mathcal{L}_1^{(+)}$};
\node[scale=1.25] at (-1.75,-0.4) {$\mathcal{L}_1^{(-)}$};
\node[scale=1.25] at (1.75,0.4) {$\mathcal{L}_2^{(+)}$};
\node[scale=1.25] at (1.75,-0.4) {$\mathcal{L}_2^{(-)}$};

\end{tikzpicture}
\end{center}
\caption{\hspace{.05in}The set $\Omega$ and lenses.}\label{figLenseOmega}
\end{figure}
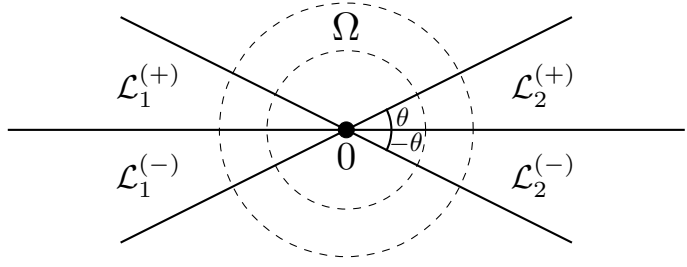

We are now ready to state the needed result from \cite[Theorem 15]{BBKT19} and refer the reader to that text for a more refined description of the set $\Omega$, as shown in Figure \ref{figLenseOmega}.  We recall that the 4 point $\gg$-function is given by
\be\label{g4}
\gg_4(z)=\frac{1}{i\pi}\ln\left(\frac{a+iR_2(z)}{z}\right)-\hf.
\ee

\begin{theorem}\label{GammaAsmptotics}
Let $\theta\in(0,\pi/2)$ be fixed. Then, as $\lambda=e^{-\varkappa}\to0$ with $|\Im\varkappa|\leq\pi$,
\begin{equation}\label{appr_Gam_4}
\Gamma_4(z;\lambda) =
  \begin{cases}
   \Psi_4(z;\varkappa)\left(\1+\BigO{\varkappa^{-1}}\right)e^{-\varkappa\gg_4(z)\sigma_3}, &z\in\Omega\setminus(\mathcal{L}_1^{(\pm)}\cup\mathcal{L}_2^{(\pm)}), \\
   \Psi_4(z;\varkappa)\left(\1+\BigO{\varkappa^{-1}}\right)\begin{bmatrix} 1 & 0 \\ \pm ie^{\varkappa(2\gg_4(z)-1)} & 1 \end{bmatrix}e^{-\varkappa\gg_4(z)\sigma_3}, &z\in\Omega\cap\mathcal{L}_1^{(\pm)}, \\
   \Psi_4(z;\varkappa)\left(\1+\BigO{\varkappa^{-1}}\right)\begin{bmatrix} 1 & \mp ie^{-\varkappa(2\gg_4(z)+1)} \\ 0 & 1 \end{bmatrix}e^{-\varkappa\gg_4(z)\sigma_3}, &z\in\Omega\cap\mathcal{L}_2^{(\pm)},
  \end{cases} \nonumber 
\end{equation}
uniformly for $z\in\Omega$.  See Figure \ref{figLenseOmega} for $\theta, \Omega, \mathcal{L}_j^{(\pm)}$.
\end{theorem}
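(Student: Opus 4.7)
The plan is to carry out the Deift--Zhou nonlinear steepest descent analysis for RHP \ref{RHPGamma} specialized to $E=[-a,0]$, $J=[0,a]$ with a single double point at $z=0$. Since $\Omega$ is an annulus bounded away from the double point and from both simple endpoints $\pm a$ (see Figure \ref{figLenseOmega}), no local parametrix at $z=0$ is needed, and the construction mirrors that of Section \ref{sec: Gamma} in a particularly clean setting.

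First, I would apply the $\gg_4$-function transformation
\be
Y_4(z;\varkappa):=e^{-\varkappa\gg_4(\infty)\sigma_3}\Gamma_4(z;2e^{-\varkappa})e^{\varkappa\gg_4(z)\sigma_3},
\ee
noting that $\gg_4(\infty)=0$ is an immediate consequence of \eqref{g4}. As in Section \ref{sec: Gamma}, this converts the jumps on $(-a,0)$ and $(0,a)$ into oscillatory ones admitting the standard upper/lower triangular factorization with middle factor $\pm i\sigma_1$. Opening the lenses $\mathcal L_1^{(\pm)}, \mathcal L_2^{(\pm)}$ produces $Z_4(z;\varkappa)$ whose jumps are $\pm i\sigma_1$ on $(-a,0)\cup(0,a)$ and $\1+\BigO{e^{-c\varkappa}}$, $c>0$, on the portions of the lens boundaries lying in any compact subset of $\mathbb C\setminus\{-a,0,a\}$. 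This exponential decay follows from Proposition \ref{prop: g-function} applied to $\gg_4$, which guarantees $|\Re \gg_4(z)|<\hf$ strictly off $[-a,a]$.

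Next I would build the global parametrix $\hat Z_4$ on $\Omega$: use the outer parametrix $\Psi_4(z;\varkappa)$ from \eqref{Psi_4} away from the endpoints, glued to standard Bessel-type local parametrices inside small disks around $\pm a$ (cf.\ \cite[Section 4.3]{BKT16}). By the construction of Subsection \ref{sect-modsol2}, $\Psi_4$ exactly solves the model RHP with jumps $\pm i\sigma_1$ on $E$ and $J$, and the Bessel parametrices match $\Psi_4$ on the boundaries of the endpoint disks up to $\1+\BigO{\varkappa^{-1}}$ uniformly in $\varkappa$ with $|\Im\varkappa|\leq\pi$. The error matrix $T_4:=Z_4\hat Z_4^{-1}$ then has no jump on $(-a,0)\cup(0,a)$, $\1+\BigO{\varkappa^{-1}}$ jumps on the boundaries of the Bessel disks, and exponentially small jumps on the remaining portions of the lens contours, yielding $T_4=\1+\BigO{\varkappa^{-1}}$ uniformly on compact subsets of $\Omega$ by standard small-norm RHP theory.

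Finally, unravelling $\Gamma_4\mapsto Y_4\mapsto Z_4\mapsto T_4$ in each region produces the three cases in \eqref{appr_Gam_4}. Outside the lenses, one has $\Gamma_4=T_4\Psi_4 e^{-\varkappa\gg_4\sigma_3}$; since $\Psi_4^{\pm 1}$ are uniformly bounded on $\Omega$, conjugation gives $T_4\Psi_4=\Psi_4(\1+\BigO{\varkappa^{-1}})$, yielding the first case. Inside each lens, the inverse of the triangular factor from \eqref{mat-Z} must be inserted between $\Psi_4(\1+\BigO{\varkappa^{-1}})$ and the exponential, producing the remaining two cases. The main point requiring care is the choice of the sign convention $x=y=\pm i/2$ in \eqref{Psi_4} according to $\mathrm{sgn}(\Im\varkappa)$: this is fixed by the requirement that $T_4$ extend continuously across $\Im\varkappa=0$, equivalently by matching to the upper/lower continuation-in-$\lambda$ prescription built into the definition of $\Gamma_4$; a direct cross-check can be obtained from the explicit hypergeometric formula for $\Gamma_4$ in \cite[eq.\ 17]{BBKT19} combined with large-parameter asymptotics for ${}_2F_1$.
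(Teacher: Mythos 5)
The paper does not actually prove this statement: the text preceding the theorem reads ``We are now ready to state the needed result from \cite[Theorem 15]{BBKT19},'' so the result is imported from the earlier paper, where it is established from the explicit hypergeometric representation of $\Gamma_4$. Your proposal attempts a self-contained Deift--Zhou proof, which is a genuinely different route, but it has a gap that does not appear fixable within that framework.

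The problem is near the double point $z=0$. For $E=[-a,0]$, $J=[0,a]$ all four lens boundaries meet at $z=0$. Your global parametrix $\hat Z_4$ consists of $\Psi_4$ away from $\pm a$, Bessel parametrices at $\pm a$, and \emph{no local parametrix at $z=0$}. The jump of $T_4=Z_4\hat Z_4^{-1}$ on the lens contours is
\[
V_T(z)=\Psi_4(z)\begin{bmatrix} 1 & 0 \\ ie^{\varkappa(2\gg_4(z)-1)} & 1 \end{bmatrix}\Psi_4^{-1}(z)
\]
(and its analogue on $\partial\mathcal{L}_2^{(\pm)}$). The scalar exponential is indeed uniformly $\BigO{e^{-c\varkappa}}$ on the lens rays, since they approach $0$ at a fixed angle $\theta$. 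But $\Psi_4(z)$ and $\Psi_4^{-1}(z)$ are both $\BigO{|z|^{-1/2}}$ as $z\to0$, so $V_T(z)-\1=\BigO{|z|^{-1}}e^{-c\varkappa}$, which is not in $L^2$ of the lens contour near $z=0$. The small-norm RHP machinery you invoke therefore does not apply. You correctly note that the exponential decay holds ``on the portions of the lens boundaries lying in any compact subset of $\mathbb C\setminus\{-a,0,a\}$,'' but then in the error analysis you apply it to the full lens contour minus the Bessel disks, which includes a neighborhood of $z=0$.

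The natural remedy --- inserting a local parametrix at $z=0$ --- is circular here: for the two-interval double-point model the local parametrix is, after unwinding the transformations, precisely (a conjugate of) $\Gamma_4$ itself, and the matching condition $\Psi_4\mathcal{P}_0^{-1}=\1+\BigO{\varkappa^{-1}}$ on the disk boundary \emph{is} the assertion of Theorem~\ref{GammaAsmptotics}. This is exactly why the paper uses Theorem~\ref{GammaAsmptotics} as an input when constructing the double-point parametrices in \eqref{ParamIeLeft}--\eqref{ParamIeRight} and proving Lemma~\ref{lem-param}. The hypergeometric route, which you relegate to a ``cross-check'' for the sign convention, is in fact the proof: one starts from the explicit formula \cite[eq.\ 17]{BBKT19} and applies large-parameter asymptotics of ${}_2F_1$ directly.
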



\section{Positive definiteness of the matrix $\mathbb{M}$}\label{sec: appendix pos def}

The objective of this Appendix is to prove Proposition \ref{prop: M pos def}, which claims that the matrix $\mathbb{M}$, defined in \eqref{M-def}, is positive definite.  We begin with some preparatory observations and Lemmas.  First, recall that $\mathbb{M}$ is the $n\times n$ block diagonal matrix with blocks $\mathbb{M}_1, \mathbb{M}_2$ of size $\tilde{N}\times\tilde{N}, N\times N$, respectively.  Using \eqref{M1 diag}-\eqref{offdiag-ent} and the identity $b_j-b_l=(a_2-a_1)\sin(\nu_l+\nu_j)\sin(\nu_l-\nu_k)$, we can see that
\begin{align}\label{M1M2 fact}
    \mathbb{M}_1=\tilde{K}\tilde{D}+\frac{2\tilde{K}\tilde{C}\tilde{K}}{a_2-a_1}, \hspace{1cm} \mathbb{M}_2=KD+\frac{2KCK}{a_2-a_1},
\end{align}
where $\tilde{K}=\text{diag}[\{k_{2j-1}\}_{j=1}^{\tilde{N}}]$, $K=\text{diag}[\{k_{2j}\}_{j=1}^N]$, $\tilde{C}, C$ are the cosecant matrices 
\begin{align}\label{cosec mat}
    \tilde{C}&=[\csc(\nu_{2j-1}+\nu_{2k-1})]_{j,k=1}^{\tilde{N}}, \hspace{1cm} C=[\csc(\nu_{2j}+\nu_{2k})]_{j,k=1}^{N},
\end{align}
and
\begin{align}
    \tilde{D}&=\text{diag}\bigg[\bigg\{\cos(\alpha-\nu_{2m-1})-\sum_{j=1}^{\tilde{N}}\frac{2k_{2j-1}}{(a_2-a_1)\sin(\nu_{2j-1}+\nu_{2m-1})}\bigg\}_{m=1}^{\tilde{N}}\bigg], \label{tilde D} \\
    D&=\text{diag}\bigg[\bigg\{\sin(\alpha+\nu_{2m})-\sum_{j=1}^{N}\frac{2k_{2j}}{(a_2-a_1)\sin(\nu_{2j}+\nu_{2m})}\bigg\}_{m=1}^{N}\bigg]. \label{D}
\end{align}


\bl\label{lemma: tot pos}
The kernel $\hat{C}(\theta,\phi):= \csc(\theta+\phi)$ is totally positive on $(0,\frac{\pi}{2})$.
\el
\begin{proof}
Let $\xi_j:= {\rm e}^{2i\theta_j}$. 
We want to prove that 
\be
\det \bigg[\hat{C}(\theta_i, \theta_j)\bigg]_{i,j=1}^N>0 \nonumber
\ee
for all $N\in \N$ and $\theta_j\in (0,\frac{\pi}{2})$.
We have 
\bea
\hat{C}(\theta,\phi) = \frac {2i}{{\rm e}^{i(\theta+\phi)}-{\rm e}^{-i(\theta+\phi)}} = {\rm e}^{i\phi-i\theta}\frac {2i}{{\rm e}^{2i\phi}-{\rm e}^{-2i\theta}}, \nonumber
\eea
and thus
\begin{align*}
    \det \bigg[\hat{C}(\theta_i, \theta_j)\bigg]_{i,j=1}^N
=\det \bigg[\xi_i^{\frac 12} \ov\xi_j^{\frac 1 2}\frac {2i}{\xi_i-\ov \xi_j}\bigg]_{i,j=1}^N
=(2i)^N\det \bigg[\frac {1}{\xi_i-\ov \xi_j}\bigg]_{i,j=1}^N.
\end{align*}
Using the Cauchy determinant formula, we obtain
\begin{align*}
    (2i)^N \frac {\prod_{i<j} (\xi_i-\xi_j)(\ov\xi_j-\ov\xi_i)}{\prod_{i,j=1}^N (\xi_i-\ov \xi_j)}=\frac {(2i)^N\prod_{i<j} |\xi_i-\xi_j|^2}{\prod_{i=1}^N (\xi_i-\ov \xi_i) \prod_{i<j} |\ov\xi_i -  \xi_j|^2}=\frac{\prod_{i<j}|\xi_i-\xi_j|^2}
{\prod_{j=1}^N\sin(2\theta_j) \prod_{i<j} |\ov \xi_i-\xi_j|^2}.
\end{align*}
Therefore
\begin{align*}
    \det \bigg[\hat{C}(\theta_i, \theta_j)\bigg]_{i,j=1}^N=\frac{\prod_{i<j}|\xi_i-\xi_j|^2}
{\prod_{j=1}^N\sin(2\theta_j) \prod_{i<j} |\ov \xi_i-\xi_j|^2}>0
\end{align*}
because $\sin(2\theta_j)>0$ for any $\theta_j\in(0,\frac{\pi}{2})$.  Since this statement is true for any diagonal minor, the matrix $\left[\hat{C}(\theta_i, \theta_j)\right]_{i,j=1}^N$
is positive definite.
\end{proof}

\begin{lemma}\label{lemma: diag pos}
The diagonal elements of the matrices $\tilde{D}, D$ are positive for any $n\in\mathbb{N}$.
\end{lemma}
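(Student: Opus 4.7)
The plan is to establish positivity of each diagonal entry of $\tilde D$ and $D$ via the residue theorem applied to a carefully chosen $\pi$-periodic meromorphic function of an auxiliary complex variable $z$. The same scheme works in each of the four sub-cases (parity of $n$ combined with the choice of $D$ or $\tilde D$), so I describe it in detail only for $D_{mm}$ with $n=2N+1$ odd and indicate how the remaining cases are adjusted. Throughout I use that $\nu_1>\nu_2>\cdots>\nu_n$ all lie in $(0,\pi/2)$ (consequence of $b_1<\cdots<b_n$ and the geometric definition in Figure~\ref{fig: nu angle}), so every $\sin(\nu_k+\nu_l)$ is strictly positive, every $\sin|\nu_k-\nu_l|$ is strictly positive, and every $k_j>0$.

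For $n$ odd, introduce the meromorphic function
\[
F(z):=\frac{1}{\sin(z-\nu_{2m})}\prod_{k=1}^{n}\bigl(\sin(z+\nu_k)\bigr)^{(-1)^{k+1}},
\]
which is $\pi$-periodic. Its only poles inside the fundamental strip $\{|\Re z|<\pi/2\}$ are simple poles at $z=\nu_{2m}$ and at $z=-\nu_{2j}$, $j=1,\dots,N$. A direct computation of $\operatorname{Res}_{z=-\nu_{2j}}F$, tracking signs of $\sin(\nu_k-\nu_{2j})$ via a count of indices $k<2j$ of each parity and matching against the factorization of $k_{2j}$ in \eqref{k_j}, yields
\[
\operatorname{Res}_{z=-\nu_{2j}}F(z)=\frac{\prod_{k\text{ odd}}\sin|\nu_k-\nu_{2j}|}{\sin(\nu_{2j}+\nu_{2m})\prod_{k\text{ even},\,k\neq 2j}\sin|\nu_k-\nu_{2j}|}=\frac{2k_{2j}}{(a_2-a_1)\sin(\nu_{2j}+\nu_{2m})},
\]
precisely the generic term of the sum in $D_{mm}$.

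Next, since the numerator and denominator of $F$ contain the same number ($N+1$) of sine factors, the standard asymptotic $\sin(z+c)\sim(i/2)e^{-i(z+c)}$ as $\Im z\to+\infty$ (with reflected behavior as $\Im z\to-\infty$) gives the finite limits $F(z)\to e^{\mp i(\alpha+\nu_{2m})}$ as $\Im z\to\pm\infty$, where $\alpha=\sum_{k=1}^n(-1)^{k+1}\nu_k$ is as in \eqref{tgg inf}. Integrating $F$ over the boundary of the rectangle $\{|\Re z|<\pi/2,\,|\Im z|<R\}$, using $\pi$-periodicity to cancel the vertical sides and letting $R\to\infty$, produces
\[
\sin(\alpha+\nu_{2m})=\operatorname{Res}_{z=\nu_{2m}}F(z)+\sum_{j=1}^{N}\operatorname{Res}_{z=-\nu_{2j}}F(z),
\]
and therefore
\[
D_{mm}=\sin(\alpha+\nu_{2m})-\frac{2}{a_2-a_1}\sum_{j=1}^{N}\frac{k_{2j}}{\sin(\nu_{2j}+\nu_{2m})}=\operatorname{Res}_{z=\nu_{2m}}F(z)=\frac{\prod_{k\text{ odd}}\sin(\nu_{2m}+\nu_k)}{\prod_{k\text{ even}}\sin(\nu_{2m}+\nu_k)}>0,
\]
the final positivity being immediate because each $\nu_{2m}+\nu_k\in(0,\pi)$.

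The three remaining sub-cases are handled by the same argument with a slightly adjusted $F$: when $n$ is even, the function above is only antiperiodic, but $\sin(z)\cdot F(z)$ is $\pi$-periodic, has the same limits as $\Im z\to\pm\infty$, and absorbs exactly the extra $\sin\nu_{2j}$ factor present in \eqref{k_j}, producing a closed form $D_{mm}=\sin\nu_{2m}\prod_{k\text{ odd}}\sin(\nu_{2m}+\nu_k)/\prod_{k\text{ even}}\sin(\nu_{2m}+\nu_k)$ which is manifestly positive after canceling $\sin(2\nu_{2m})$ from the even product. For $\tilde D_{mm}$ one uses the analogous construction with the roles of even and odd indices swapped, together with a factor that changes the boundary-at-infinity contribution from $\sin(\alpha+\nu_{2m})$ to $\cos(\alpha-\nu_{2m-1})$. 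The main obstacle is purely combinatorial: in each sub-case one must verify that the parity count of the factors $\sin(\nu_k-\nu_{2j})$, the exponents $(1\pm(-1)^n)/2$ in \eqref{k_j}, and the orientation of the contour conspire to reproduce the sum in the correct sign, so that the leftover residue at $z=\pm\nu_{2m}$ (resp.\ $\pm\nu_{2m-1}$) emerges as a positive ratio of sines. Once $F$ is correctly identified, the remaining steps are a routine calculus of residues.
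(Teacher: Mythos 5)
Your proof is correct and takes essentially the same approach as the paper's: both use the calculus of residues to rewrite the diagonal entry as a single leftover residue, which is a manifestly positive ratio of sines of angles in $(0,\pi)$. The difference is purely cosmetic — you work with $\pi$-periodic trigonometric functions integrated over a period strip, while the paper passes to rational functions of $e^{2i\nu_k}$ on the Riemann sphere; the two are identified under the substitution $z=e^{-2iw}$, which carries your poles at $w=-\nu_{2j}$ and $w=\nu_{2m}$ to the paper's poles at $\xi_j$ and $\overline{\xi}_m$, and your boundary-at-infinity contribution to the paper's residues at $0$ and $\infty$.
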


\begin{proof}
Define $\xi_j=e^{2i\nu_{2j}}$, $\eta_j=e^{2i\nu_{2j-1}}$, polynomials
\begin{align*}
    Q(z)=\prod_{j=1}^{\tilde{N}}(z-\eta_j), \hspace{1cm} P(z)=\prod_{j=1}^N(z-\xi_j),
\end{align*}
and rational functions
\begin{align*}
    F(z)=\frac{ie^{-i\alpha}(z-1)^{\frac{1+(-1)^n}{2}}Q(z)}{2\xi_m^\frac{1}{2}z(z-\overline{\xi}_m)P(z)}, \hspace{1cm} \tilde{F}(z)=\frac{e^{i\alpha}(z+1)(z-1)^\frac{1-(-1)^n}{2}P(z)}{2\eta_m^\frac{1}{2}z(z-\overline{\eta}_m)Q(z)}.
\end{align*}
Inspecting \eqref{tilde D}, \eqref{D} and applying the identity $\sin(z)=\frac{1}{2i}(e^{iz}-e^{-iz})$, we find that
\begin{align*}
    \frac{2k_{2j-1}}{(a_2-a_1)\sin(\nu_{2j-1}+\nu_{2m-1})}&=\frac{e^{i\alpha}(\eta_j+1)(\eta_j-1)^{\frac{1-(-1)^n}{2}}P(\eta_j)}{2\eta_j\eta_m^\frac{1}{2}(\eta_j-\overline{\eta}_m)Q'(\eta_j)}=\text{Res}\tilde{F}(z)\big|_{z=\eta_j}, \\
    \frac{2k_{2j}}{(a_2-a_1)\sin(\nu_{2j}+\nu_{2m})}&=\frac{ie^{-i\alpha}(\xi_j-1)^{\frac{1+(-1)^n}{2}}Q(\xi_j)}{2\xi_j\xi_m^\frac{1}{2}(\xi_j-\overline{\xi}_m)P'(\xi_j)}=\text{Res}F(z)\big|_{z=\xi_j}.
\end{align*}
By contour deformation, we have identities
\begin{align*}
    \sum_{j=1}^{\tilde{N}}\frac{2k_{2j-1}}{(a_2-a_1)\sin(\nu_{2j-1}+\nu_{2m-1})}&=\sum_{j=1}^{\tilde{N}}\text{Res}\tilde{F}(z)\big|_{z=\eta_j}=\text{Res}\tilde{F}(z)\big|_{z=\infty}-\text{Res}\tilde{F}(z)\big|_{z=0}-\text{Res}\tilde{F}(z)\big|_{z=\overline{\eta}_m}, \\
    \sum_{j=1}^N\frac{2k_{2j}}{(a_2-a_1)\sin(\nu_{2j}+\nu_{2m})}&=\sum_{j=1}^N\text{Res}F(z)\big|_{z=\xi_j}=\text{Res}F(z)\big|_{z=\infty}-\text{Res}F(z)\big|_{z=0}-\text{Res}F(z)\big|_{z=\overline{\xi}_m}.
\end{align*}
It can now be verified that 
\begin{align*}
    \text{Res}\tilde{F}(z)\big|_{z=\infty}-\text{Res}\tilde{F}(z)\big|_{z=0}&=\cos(\alpha-\nu_{2m-1}), \\
    \text{Res}F(z)\big|_{z=\infty}-\text{Res}F(z)\big|_{z=0}&=\sin(\alpha+\nu_{2m}),
\end{align*}
and
\begin{align*}
    \text{Res}\tilde{F}(z)\big|_{z=\overline{\eta}_m}&=\cos(\nu_{2m-1})(\sin\nu_{2m-1})^\frac{1-(-1)^n}{2}\frac{\prod_{s=1}^N\sin(\nu_{2m-1}+\nu_{2s})}{\prod_{s=1}^{\tilde{N}}\sin(\nu_{2m-1}+\nu_{2s-1})}, \\
    \text{Res}F(z)\big|_{z=\overline{\xi}_m}&=(\sin\nu_{2m})^\frac{1+(-1)^n}{2}\frac{\prod_{s=1}^{\tilde{N}}\sin(\n_{2m}+\n_{2s-1}) }{\prod_{s=1}^N \sin(\n_{2m}+\n_{2s})}.
\end{align*}
Letting $\tilde{D}_m, D_m$ denote the $m$-th diagonal entries of the matrices $\tilde{D}, D$, respectively, we have shown
\begin{align}
    \tilde{D}_m&=\cos(\nu_{2m-1})(\sin\nu_{2m-1})^\frac{1-(-1)^n}{2}\frac{\prod_{s=1}^N\sin(\nu_{2m-1}+\nu_{2s})}{\prod_{s=1}^{\tilde{N}}\sin(\nu_{2m-1}+\nu_{2s-1})}>0, \label{tDm} \\ D_m&=(\sin\nu_{2m})^\frac{1+(-1)^n}{2}\frac{\prod_{s=1}^{\tilde{N}}\sin(\n_{2m}+\n_{2s-1}) }{\prod_{s=1}^N \sin(\n_{2m}+\n_{2s})}>0, \label{Dm}
\end{align}
since $\nu_j\in(0,\frac{\pi}{2})$ for $j=1,\ldots,n$, $\cos(x)>0$ for $x\in(0,\frac{\pi}{2})$, and $\sin(x)>0$ for $x\in(0,\pi)$, completing the proof.

\end{proof}

\begin{proposition}\label{prop: M pos def}
The matrix $\mathbb{M}$, defined in \eqref{M-def}, is positive definite for any $n\in\mathbb{N}$.
\end{proposition}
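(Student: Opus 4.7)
Since $\mathbb M$ is block diagonal with symmetric blocks $\mathbb M_1,\mathbb M_2$, it suffices to show each block is positive definite separately; the argument is identical for both, so I will focus on $\mathbb M_1$. The plan is to use the factorization \eqref{M1M2 fact}, conjugate by $\tilde K^{1/2}$, and then recognize the result as a sum of two positive definite matrices, relying on the preparatory Lemmas \ref{lemma: tot pos} and \ref{lemma: diag pos}.

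\noindent\textbf{Step 1: positivity of the diagonal factors.} First I would verify that every diagonal entry $k_j$ of $\tilde K$ and $K$ is strictly positive. Inspecting \eqref{k_j}, and using $\nu_j\in(0,\pi/2)$ and $|\nu_l-\nu_j|\in(0,\pi/2)$ for $l\neq j$, one sees that each sine and cosine appearing is positive; since positive numbers raised to any real power remain positive, $k_j>0$ for all $j$. In particular $\tilde K^{1/2}$ and $K^{1/2}$ are well-defined, diagonal, and invertible.

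\noindent\textbf{Step 2: conjugation and reduction.} Using \eqref{M1M2 fact} and the commutativity of diagonal matrices, I would write
\be
\mathbb M_1=\tilde K^{1/2}\Bigl(\tilde D+\tfrac{2}{a_2-a_1}\,\tilde K^{1/2}\tilde C\tilde K^{1/2}\Bigr)\tilde K^{1/2},\qquad
\mathbb M_2=K^{1/2}\Bigl(D+\tfrac{2}{a_2-a_1}\,K^{1/2}CK^{1/2}\Bigr)K^{1/2}. \nonumber
\ee
(Along the way it is worth confirming \eqref{M1M2 fact} itself by plugging the identity $b_j-b_l=(a_2-a_1)\sin(\nu_l+\nu_j)\sin(\nu_l-\nu_j)$ into \eqref{offdiag-ent} and \eqref{M1 diag}; this converts the off-diagonal entries to $\tfrac{2k_{2j-1}k_{2l-1}}{(a_2-a_1)\sin(\nu_{2j-1}+\nu_{2l-1})}$, and absorbs the excess self-term into $\tilde D$ on the diagonal.) Since conjugation by the non-singular matrix $\tilde K^{1/2}$ preserves positive definiteness, $\mathbb M_1$ is PD iff the bracketed matrix is, and likewise for $\mathbb M_2$.

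\noindent\textbf{Step 3: positive definiteness of the two summands.} Each bracketed matrix is a sum of two pieces. The piece $\tilde D$ (resp.\ $D$) is diagonal, and Lemma \ref{lemma: diag pos} shows that every diagonal entry is strictly positive, so it is positive definite. The piece $\tilde K^{1/2}\tilde C\tilde K^{1/2}$ (resp.\ $K^{1/2}CK^{1/2}$) is congruent to $\tilde C$ (resp.\ $C$) via the invertible matrix $\tilde K^{1/2}$ (resp.\ $K^{1/2}$); by Lemma \ref{lemma: tot pos} the cosecant matrices $\tilde C=[\csc(\nu_{2j-1}+\nu_{2k-1})]$ and $C=[\csc(\nu_{2j}+\nu_{2k})]$ are positive definite because their arguments lie in $(0,\pi/2)$. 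Adding two PD matrices yields a PD matrix, completing the proof.

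\noindent\textbf{Expected obstacles.} There is no serious analytic obstacle here, since all the hard work is packaged in Lemmas \ref{lemma: tot pos} and \ref{lemma: diag pos}. The main points requiring care are purely algebraic: confirming the factorization \eqref{M1M2 fact} with correct signs (using the sum-product identity for $b_j-b_l$), and checking that the signs of $k_j$ are positive so that $\tilde K^{1/2}, K^{1/2}$ make sense as real invertible matrices. Once those bookkeeping items are settled, the congruence-plus-sum argument delivers the conclusion with essentially no additional work.
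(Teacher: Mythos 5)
Your proposal is correct and follows essentially the same route as the paper's proof: factor $\mathbb M_1,\mathbb M_2$ as in \eqref{M1M2 fact}, invoke Lemma \ref{lemma: tot pos} for the cosecant matrices and Lemma \ref{lemma: diag pos} for the diagonal factors, use $k_j>0$, and add. The only difference is cosmetic: you insert the congruence by $\tilde K^{1/2}$ (what you call ``conjugation'' is really congruence, which is what preserves positive definiteness) instead of directly observing that $\tilde K\tilde D$ is diagonal with positive entries and $\tilde K\tilde C\tilde K$ is a congruence of $\tilde C$; both phrasings are equivalent and correct.
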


\begin{proof}
Lemmas \ref{lemma: tot pos}, \ref{lemma: diag pos} have shown that $\tilde{C}, C, \tilde{D}, D$ are positive definite. It is clear from \eqref{k_j} that $k_j>0$ for $j=1,\ldots,n$ and thus $\tilde{K}\tilde{D}, KD, \frac{2\tilde{K}\tilde{C}\tilde{K}}{a_2-a_1}, \frac{2KCK}{a_2-a_1}$ are also positive definite.  So by \eqref{M1M2 fact}, we have that $\mathbb{M}_1, \mathbb{M}_2$ are positive definite because the sum of positive definite matrices is positive definite.  Finally, we conclude that $\mathbb{M}$ is positive definite because it is a block diagonal matrix with positive definite blocks.
\end{proof}

\end{document}